\newtheorem{theorem}{Theorem}[section]
\newtheorem{proposition}[theorem]{Proposition}
\newtheorem{lemma}[theorem]{Lemma}
\newtheorem{remark}[theorem]{Remark}
\newtheorem{corollary}[theorem]{Corollary}
\newtheorem{question}[theorem]{Question}
\newtheorem{conjecture}[theorem]{Conjecture}
\DeclareMathOperator{\GL}{GL}
\let\Im\relax
\DeclareMathOperator{\Im}{Im}
\DeclareMathOperator{\supp}{supp}
\DeclareMathOperator{\rng}{rng}
\DeclareMathOperator{\lcm}{lcm}
\DeclareMathOperator{\Conj}{Conj}
\DeclareMathOperator{\CD}{CD}
\DeclareMathOperator{\Tor}{Tor}
\DeclareMathOperator{\Sym}{Sym}
\DeclareMathOperator{\Ball}{B}
\DeclareMathOperator{\id}{id}
\DeclareMathOperator{\Max}{max}
\DeclareMathOperator{\normfileq}{\unlhd_{\text{f.i.}}}
\title[Bounding conjugacy depth functions for wreath products]{Bounding conjugacy depth functions for wreath products of finitely generated abelian groups}
\author[M. Ferov and M. Pentigore]
{Michal Ferov and Mark Pengitore}
\thanks{The first author is currently support by the Australian Research Council Laureate Fellowship FL170100032 of Professor George Willis. The second author is partially supported by NSF grant DMS-1812028 ``Geometry, Topology, and Dynamics of Spaces of Non-positive Curvature"}
\begin{document}

\begin{abstract}
    In this article, we study the asymptotic behaviour of conjugacy separability for wreath products of abelian groups. We fully characterise the asymptotic class in the case of lamplighter groups and give exponential upper and lower bounds for generalised lamplighter groups. In the case where the base group is infinite, we give superexponential lower and upper bounds. We apply our results to obtain lower bounds for conjugacy depth functions of various wreath products of groups where the acting group is not abelian.
\end{abstract}
\maketitle

\tableofcontents
\newpage

\section{Introduction}
\label{section:intro}
Studying infinite, finitely generated groups through their finite quotients is a common method in group theory. Groups in which one can distinguish elements using their finite quotients are called residually finite. Formally speaking, a group $G$ is said to be \emph{residually finite} if for every pair of distinct elements $f,g \in G$ there exists a finite group $Q$ and a surjective homomorphism $\pi \colon G \to Q$ such that $\pi(f) \neq \pi(g)$ in $Q$. Group properties of this type are called \emph{separability properties} and are usually defined by what types of subsets we want to distinguish. In this article, we study \emph{conjugacy separability}, meaning that we will study groups in which one can distinguish conjugacy classes using finite quotients. To be more specific, a group $G$ is said to be conjugacy separable if for every pair of nonconjugate elements $f,g \in G$ there exists a finite group $Q$ and a surjective homomorphism $\pi \colon G \to Q$ such that $\pi(f)$ is not conjugate to $\pi(g)$ in $Q$.

\subsection{Motivation} $\:$ \newline
One of the original reasons for studying separability properties in groups is that they provide an algebraic analogue to decision problems in finitely presented groups. To be more specific, if $S \subseteq G$ is a separable subset such that $S$ is recursively enumerable and where one can always effectively construct the image of $S$ under the canonical projection onto a finite quotient of $G$, then one can then decide whether a word in the generators of $G$ represents an element belonging to $S$ simply by checking finite quotients. Indeed, it was proved by Mal'tsev \cite{malcev}, adapting the result of McKinsey \cite{mckinsey} to the setting of finitely presented groups, that the word problem is solvable for finitely presented, residually finite groups in the following way. Given a finite presentation $\langle X \mid R \rangle$ and a word $w \in F(X)$ where $F(X)$ is the free group with the generating set $X$, one runs two algorithms in parallel. The first algorithm enumerates all the products of conjugates of the relators and their inverses and checks whether $w$ appears on the list. On the other hand, the second algorithm enumerates all finite quotients of $G$ and checks whether the image of the element of $G$ represented by $w$ is nontrivial. In other words, the first algorithm is looking for a witness of the triviality of $w$ whereas the second algorithm is looking for a witness of the nontriviality of $w$. Using an analogous approach, Mostowski \cite{mostowski} showed that the conjugacy problem is solvable for finitely presented, conjugacy separable groups. In a similar fashion, finitely presented, LERF groups have solvable generalised word problem meaning that the membership problem is uniformly solvable for every finitely generated subgroup. In general, algorithms that involve enumerating finite quotients of an algebraic structure are sometimes called algorithms of \textbf{Mal'tsev-Mostowski type} or \textbf{McKinsey's} algorithms.

Given an algorithm, it is natural to ask how much computing power is necessary to produce an answer. In the case of algorithms of Mal'tsev-Mostowski type, one can measure their space complexity by the associated depth functions which we go into more detail. Given a residually finite group $G$ with a finite generating set $S$, its residual finiteness depth function $\text{RF}_{G,S} \colon \mathbb{N} \to \mathbb{N}$ quantifies how deep within the lattice of normal subgroups of finite index of $G$ one needs to look to be able to decide whether or not a word of length at most $n$ represents a nontrivial element. In particular, if $w$ is a word in $S$ of length at most $n$, then either $G$ has a finite quotient of size at most $\text{RF}_{G,S}(n)$ in which the image of the element represented by $w$ is nontrivial, and if there is no finite quotient of size less than or equal to $\text{RF}_{G,S}(n)$ in which the image of $w$ is nontrivial, then $w$ must represent the trivial word. In particular, we see that the the residual finiteness depth function of $G$ with respect to the generating set $S$ fully determines the size of finite quotients that McKinsey's algorithm needs to generate in order to give produce an answer. Since every finite group can be fully described by its Cayley table, we see that the space complexity of the word problem of $G$ with respect to the generating set $S$ can be bounded from above by $(\text{RF}_{G,S}(n))^2$. Moreover, the notion of depth function can be generalised to different separability properties. In this note, we study conjugacy separability depth functions which denote as $\Conj_{G,S}(n)$ which is a  function that measures how deep within the lattice of normal subgroups of finite index one needs to go in order to be able to distinguish distinct conjugacy classes of elements of word length at most $n$ with respect to the finite generating subset $S$. Just like in computational complexity, we study these functions up to asymptotic equivalence. See subsection \ref{subsection:depth_fuctions} for the precise definitions of depth functions and the corresponding asymptotic notions.

\subsection{Statement of the results} $\:$ \newline
Not much is known about the asymptotic behaviour of the function $\Conj_{G,S}(n)$ for different classes of groups. The first result of this kind was by Lawton, Louder, and McReynolds \cite{LLM} who showed that if $G$ is a nonabelian free group or the fundamental group of a closed oriented surface of genus $g \geq 2$, then $\Conj_G(n) \preceq n^{n^2}$. For the class of finitely generated nilpotent groups, the second named author and Der\'{e} \cite{Dere_Pengitore} showed that if $G$ is a finite extension of a finitely generated abelian group, then $\Conj_G(n) \preceq (\log(n))^d$ for some natural number $d$, and when $G$ is a finite extension of a finitely generated nilpotent group that is not virtually abelian, then there exist natural numbers $d_1$ and $d_2$ such that $n^{d_1} \preceq \Conj_G(n) \preceq n^{d_2}$. Finally, in \cite{ferov_pengitore}, the authors of this note gave upper bounds for $\Conj_{A \wr B}(n)$ of wreath products of conjugacy separable groups $A$ and $B$ which generalises Remeslennikov's classification of conjugacy separable wreath products \cite{remeslennikov}. However, when applied directly to wreath products of abelian groups, the formulas given in \cite{ferov_pengitore} produce rather coarse upper bounds. Applying \cite[Theorem C]{ferov_pengitore} to the lamplighter group $\mathbb{F}_2 \wr \mathbb{Z}$, one can then demonstrate that its conjugacy depth function can be bounded from above by the function $2^{n^{n^2}}$. Similarly, applying \cite[Theorem C]{ferov_pengitore} to the group $\mathbb{Z} \wr \mathbb{Z}$ one can show that the conjugacy depth function is bounded from above by $n^{n^{n^2}}$. In this note, we only focus on conjugacy depth functions of wreath products of finitely generated abelian groups. This restriction allows us to use more effective methods to obtain much better upper bounds than those presented in \cite{ferov_pengitore}. Additionally, we are able to use methods from commutative algebra to produce lower bounds and in the case of the lamplighter group, we fully determine the asymptotic equivalence class of its conjugacy depth function.

Before stating our main results, we introduce some notation. Letting $f,g \colon \mathbb{N} \to \mathbb{N}$ be non-decreasing functions, we write $f \preceq g$ if there is a constant $C \in \mathbb{N}$ such that $f(n) \leq C g(Cn)$ for all $n \in \mathbb{N}$. If $f \preceq g$ and $g \preceq f$, we then write $f \approx g$.

This first theorem addresses the asymptotic behavior of conjugacy separability of wreath products of the form $A \wr B$ where $A$ is a finite abelian group and $B$ is an infinite, finitely generated abelian group. For the statements of our theorem, we say the torsion free rank of a finitely generated abelian group $A$ is the largest such natural number $k$ such that $A \cong \mathbb{Z}^k \oplus \Tor(A)$ where $\Tor(A)$ is the subgroup of finite order elements of $A$.
\begin{theorem}\label{main_thm lamplighter}
Let $A$ be a finite  abelian group, and suppose that $B$ is an infinite finitely generated abelian group. If the torsion free rank of $B$ is $1$, then
$$
\Conj_{A \wr B}(n) \approx 2^{n}.
$$
If the torsion free rank of $B$ is $k \geq 2$, then
$$
2^{n} \preceq \Conj_{A \wr B}(n) \preceq 2^{n^{2k}}.
$$

\end{theorem}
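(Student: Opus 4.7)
The plan is to establish the upper and lower bounds separately, using the algebraic description of conjugacy classes in a wreath product with abelian base. Writing elements of $A \wr B$ as pairs $(f, b)$ with $f \in A^{(B)}$ of finite support, and viewing $A^{(B)}$ as a $\mathbb{Z}[B]$-module under the shift action, one verifies that $(f_1, b_1)$ and $(f_2, b_2)$ are conjugate in $A \wr B$ if and only if $b_1 = b_2 = b$ and there exists $c \in B$ with $f_2 - c \cdot f_1 \in (1 - b) \cdot A^{(B)}$. The analogous criterion holds in any finite quotient $A \wr (B/H)$ via the corresponding principal ideal in the group ring $A[B/H]$. Word length estimates for $(f, b)$ of length at most $n$ yield $\|b\|_B \leq n$, $|\supp(f)| \leq n$, and $\supp(f)$ contained in the ball of radius $n$ in $B$.

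For the upper bounds, given nonconjugate $(f_1, b), (f_2, b)$ of word length at most $n$, I would exhibit a finite-index subgroup $H \leq B$ such that the projection $A \wr B \to A \wr (B/H)$ separates their conjugacy classes; the size of this quotient is $|A|^{[B:H]} \cdot [B:H]$. In the rank $1$ case, $H$ of index $O(n)$ suffices to project the ball of radius $2n$ injectively and make the module condition descend faithfully, producing a quotient of size $2^{O(n)}$. For rank $k \geq 2$, one uses a sublattice $H$ of index $O(n^{2k})$: the factor $n^k$ accounts for support injectivity within the ball of radius $n$ (which has $\sim n^k$ points), while the extra factor of $n^k$ is needed to faithfully resolve the principal ideal $(1 - b) \subset A[B/H]$ at the relevant scale. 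This yields quotient size $2^{n^{O(k)}}$, matching the stated $2^{n^{2k}}$.

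For the lower bound $2^n \preceq \Conj_{A \wr B}(n)$, I would exhibit explicit pairs of nonconjugate elements whose separation forces quotients of size at least $2^{\Omega(n)}$. The key observation is that for $(f_i, b) \in A \wr \mathbb{Z}$, the conjugacy invariant lives in $A[\mathbb{Z}]/(1 - b)$, whose image under the natural projection to $A[\mathbb{Z}/m]$ collapses to $A[\mathbb{Z}/\gcd(m, \ord(bH))]$. Taking $b$ of prime order $n$ and $f_1, f_2$ of small support chosen to be distinguished in $A[\mathbb{Z}/n]$ but not in any $A[\mathbb{Z}/d]$ with $d < n$ forces every finite quotient $A \wr (\mathbb{Z}/m)$ that separates them to satisfy $n \mid m$, hence to have size at least $|A|^n \cdot n = 2^{\Omega(n)}$; a supplementary analysis of general finite quotients of $A \wr \mathbb{Z}$, using that they are classified by a finite-index $A[\mathbb{Z}]$-submodule together with a compatible cyclic quotient of $\mathbb{Z}$, rules out the possibility of smaller finite quotients achieving separation. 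The bound then lifts to arbitrary $B$ via the embedding $A \wr \mathbb{Z} \hookrightarrow A \wr B$ induced by a $\mathbb{Z}$-direct summand of $B$, which preserves nonconjugacy and does not increase word length.

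The main obstacle is justifying the upper-bound exponent $2k$ in the rank $k \geq 2$ case. The sublattice $H \leq B$ must be chosen simultaneously fine enough to resolve the supports of $f_1, f_2$ inside a ball of volume $\sim n^k$ and to faithfully detect non-membership in the principal ideal $(1 - b) \cdot A[B/H]$ for every potential shift $c \in B/H$. Effective commutative-algebra tools — presumably explicit noetherianity or bounded-generation statements for the Laurent polynomial ring $A[x_1^{\pm 1}, \ldots, x_k^{\pm 1}]$ — should convert these two requirements into the index $O(n^{2k})$, but making the interaction between $\mathbb{Z}^k$-geometry and the principal-ideal structure precise is the core technical challenge.
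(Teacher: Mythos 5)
Your overall architecture matches the paper's (reduce to a quotient of the base group $B$ and count $|A|^{|\overline{B}|}|\overline{B}|$ for the upper bound; explicit nonconjugate pairs in $A\wr\mathbb{Z}$ analysed through the group-ring structure for the lower bound), but both halves have a genuine gap exactly where the real work sits. For the lower bound, your ``key observation'' that the conjugacy invariant of $(f,b)\in A\wr\mathbb{Z}$ collapses to $A[\mathbb{Z}/\gcd(m,\cdot)]$ only describes quotients of the special form $A\wr(\mathbb{Z}/m)$, and your criterion for choosing $f_1,f_2$ (distinguished in $A[\mathbb{Z}/n]$ but in no $A[\mathbb{Z}/d]$, $d<n$) only speaks to those. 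General finite quotients of $A\wr\mathbb{Z}$ are governed by arbitrary cofinite submodules: already for $A=\mathbb{F}_p$ these are the cofinite ideals of $\mathbb{F}_p[x,x^{-1}]$, which include $(P)$ with $P$ irreducible of small degree $d$, giving a quotient of size only $p^{d}$ in which the shift has large multiplicative order and which is not of the form $\mathbb{F}_p[\mathbb{Z}/m]$ for any small $m$; neither the collapse statement nor the conclusion ``$n\mid m$, hence size at least $|A|^{n}n$'' applies to them. Your ``supplementary analysis'' is precisely the hard step, and it is not sketched in a way that would succeed. The paper's Proposition \ref{rank_one_lamplighter} does this work: it reduces to $\mathbb{F}_p\wr\mathbb{Z}$ (Lemma \ref{lemma:lamplighter conjugacy embedded}), uses that $\mathbb{F}_p[x,x^{-1}]$ is a PID so every cofinite ideal is $(P)$, and chooses the pair $(x^{q}-1,q)$, $(x-1+x^{q}-1,q)$ with $q$ prime such that $p$ is a primitive root mod $q$, so that $x^{q}-1=(x-1)\psi_q$ with $\psi_q$ irreducible of degree $q-1$; a $\gcd$/B\'ezout case analysis then shows any quotient with $|\mathbb{F}_p[x,x^{-1}]/(P)|<p^{q}$ makes the pair conjugate. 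Without an argument of this type, and a pair tailored to it, the bound $2^{n}\preceq\Conj_{A\wr B}(n)$ is not established.

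For the upper bound in rank $k\geq 2$ you explicitly leave the exponent $2k$ unjustified (``should convert these two requirements into the index $O(n^{2k})$''), and your accounting of where it comes from is not the right mechanism: injectivity of the projection on a ball of radius $Cn$ needs only a congruence quotient $(\mathbb{Z}/m\mathbb{Z})^{k}$ with $m\sim n$, i.e.\ index $\sim n^{k}$, and there is no separate ``resolution of the principal ideal'' costing another $n^{k}$. The actual source of the extra factor, in Proposition \ref{proposition:abelian_wreath_reduction}, is the requirement that the images stay \emph{reduced}: support points lying in distinct cosets of $\langle b\rangle$ must stay in distinct cosets of $\langle\pi(b)\rangle$. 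This is achieved (Lemmas \ref{lemma:kernel generators}, \ref{lemma:stretch factor}, \ref{lemma:torus lemma cosets}, \ref{lemma:cosets_torsion}) by conjugating $b/\gcd(\phi(b))$ to $e_1$ with a matrix $T\in\GL_k(\mathbb{Z})$ whose entries are bounded by $2^{k-1}\|b\|$; since $\|b\|\leq n$, the ball of radius $Cn$ is stretched to radius $O(n^{2})$, forcing $m\sim n^{2}$ (and divisibility of $m$ by $\gcd(\phi(b))$ and $\exp(\Tor(B))$), hence $|\overline{B}|\sim n^{2k}$, after which nonconjugacy is read off from the translate criterion (Lemmas \ref{lemma:abelian_wreath_conjugacy_criterion} and \ref{lemma:torus_lemma}). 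Your rank-$1$ sketch has the same omission: injectivity on the ball is not enough, since if $m$ is coprime to $\phi(b)$ the cosets of $\langle b\rangle$ can collapse in $\mathbb{Z}/m\mathbb{Z}$ and nonconjugate elements can become conjugate; one needs $m$ divisible by $\|\phi(b)\|$ (and by $\exp(\Tor(B))$) as in Lemma \ref{lemma:cosets_torsion}. You also do not treat the case of different translation parts $b\neq c$ (handled cheaply via residual finiteness of $B$) or the replacement of the given elements by conjugate reduced ones (Lemma \ref{lemma:reduced_elements}); these are minor, but the two gaps above concern the core of the theorem.
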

As a corollary, we are able to compute the precise asymptotic behaviour of conjugacy separability for lamplighter groups.
\begin{corollary}
Let $\mathbb{F}_q$ be the finite field with $q$ elements. Then
  \begin{displaymath}
      \Conj_{\mathbb{F}_q \wr \mathbb{Z}}(n) \approx 2^n.
  \end{displaymath}
\end{corollary}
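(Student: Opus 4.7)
The proof is essentially an immediate application of the first case of Theorem \ref{main_thm lamplighter}. My plan is simply to verify that the hypotheses of that theorem are satisfied with $A = \mathbb{F}_q$ and $B = \mathbb{Z}$, and then read off the conclusion.

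First I would observe that the additive group of $\mathbb{F}_q$ is a finite abelian group: if $q = p^k$ for a prime $p$, then as an abelian group $\mathbb{F}_q \cong (\mathbb{Z}/p\mathbb{Z})^k$, which is certainly finite and abelian. Next, $\mathbb{Z}$ is an infinite, finitely generated abelian group with $\Tor(\mathbb{Z}) = 0$, so its torsion free rank is $k = 1$.

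Since both hypotheses of Theorem \ref{main_thm lamplighter} are met, and we fall into the torsion-free-rank-one case, the theorem yields
\[
\Conj_{\mathbb{F}_q \wr \mathbb{Z}}(n) \approx 2^n,
\]
which is exactly the claim. There is no real obstacle here — the corollary is just a specialisation of the theorem, packaged for the benefit of readers interested in the classical lamplighter groups $\mathbb{F}_q \wr \mathbb{Z}$. The work is entirely done in Theorem \ref{main_thm lamplighter}.
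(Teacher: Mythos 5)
Your proposal is correct and matches the paper's approach: the corollary is stated in the paper as an immediate specialisation of Theorem \ref{main_thm lamplighter}, exactly as you argue, with the additive group of $\mathbb{F}_q$ being finite abelian and $\mathbb{Z}$ having torsion-free rank $1$. Nothing further is needed.
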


This next theorem addresses the asymptotic behavior of conjugacy separability of $A \wr B$ when $A$ and $B$ are both infinite, finitely generated abelian groups.
 
\begin{theorem}\label{main_thm infinite}
Let $A$ be an infinite, finitely generated abelian group, and suppose that $B$ is an infinite finitely generated abelian group. If $B$ has torsion free rank $1$, then
$$
(\log(n))^{n} \preceq \Conj_{A \wr B}(n) \preceq (\log(n))^{n^{2}}.
$$
If $B$ has torsion free rank $k > 1$, then
$$
(\log(n))^{n} \preceq \Conj_{A \wr B}(n) \preceq (\log(n))^{n^{2k+2}}.
$$
\end{theorem}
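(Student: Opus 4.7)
The plan is to mirror the strategy behind Theorem~\ref{main_thm lamplighter}, with the additional task of also replacing the infinite coefficient group $A$ by a finite quotient. Identifying $\bigoplus_B A$ with the $\mathbb{Z}[B]$-module $A \otimes_{\mathbb{Z}} \mathbb{Z}[B]$, since $B$ is abelian, a direct computation shows that $(f,b) \sim (f',b')$ in $A \wr B$ if and only if $b = b'$ and there exist $c \in B$ and $h \in A \otimes \mathbb{Z}[B]$ with $f' = t^c f + (1 - t^b) h$. Thus conjugacy reduces to a membership question in the submodule $(1 - t^b)(A \otimes \mathbb{Z}[B])$, after allowing the $B$-action by translation.

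For the upper bound, a pair $(f,b), (f',b)$ of word length at most $n$ in $A \wr B$ satisfies $|b|_B \le n$, has supports contained in a ball of radius $n$ in $B$ (hence of size $O(n^k)$), and has each coefficient of $A$-word length at most $n$. After separating $b \neq b'$ in a polynomial-size quotient of $B$, I would first select a finite-index subgroup $H \leq B$ of index bounded by a polynomial in $n$ such that non-conjugacy in $A \wr B$ persists in $A \wr (B/H)$; this is the same effective commutative-algebra step that underlies Theorem~\ref{main_thm lamplighter}, here applied to the $\mathbb{Z}$-coefficient module $A \otimes \mathbb{Z}[B]/(1 - t^b)$. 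Next, I would pick a finite-index subgroup $A_0 \leq A$ of index $O(\log n)$ simultaneously detecting the finitely many candidate witnesses $f' - t^c f \pmod{1-t^b}$ as $c$ ranges over $B/H$, using $\RF_A(n) \asymp \log n$ for a finitely generated abelian group. The resulting finite quotient $(A/A_0) \wr (B/H)$ then has size at most $|A/A_0|^{[B:H]} \cdot [B:H]$, yielding the claimed upper bounds after matching the relevant exponents.

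For the lower bound, I would construct, for each $n$, a non-conjugate pair $(f_n, b_n), (f'_n, b_n) \in A \wr B$ of word length $O(n)$ whose separating quotients have size at least $(\log n)^n$. A natural construction places $f_n$ and $f'_n$ on $\sim n$ distinct $B$-translates with coefficients in the torsion-free part of $A$ chosen so that, at each support point, distinguishing the coefficients forces an $A$-quotient of index $\Omega(\log n)$; the $\sim n$ support points act independently, giving the product lower bound $(\log n)^n$. Here one must check that no clever choice of $c \in B$ can ``merge'' the contributions of different support points, which is where the non-triviality of $b_n$ and the careful positioning of the supports enter.

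The main obstacle, as in Theorem~\ref{main_thm lamplighter}, is the effective commutative-algebra step underlying the choice of $H$: one needs a quantitative bound on the smallest finite-index subgroup of $B$ keeping a specified nonzero element of $A \otimes \mathbb{Z}[B]/(1-t^b)$ (of bounded coefficient size and support) nonzero after reduction. The discrepancy between the $n$ appearing in the lower bound exponent and the $n^{2k+2}$ appearing in the upper bound exponent reflects the slack in this effective step once $\mathbb{Z}$-coefficients are allowed, together with the need to account for all $[B:H]$ possible values of the translation parameter $c$.
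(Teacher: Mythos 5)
Your upper-bound outline is essentially the paper's two-step scheme (first a polynomial-size quotient of $B$, then a quotient of $A$, with total cost $|\overline{A}|^{|\overline{B}|}\cdot|\overline{B}|$), but the $A$-step is understated in a way that matters: keeping $f'-t^cf$ outside $(1-t^{\bar b})(\overline{A}\otimes\mathbb{Z}[\overline{B}])$ for \emph{every} $c$ is a submodule-membership question, not a single application of effective residual finiteness of $A$, and one subgroup $A_0$ of index $O(\log n)$ cannot simultaneously detect polynomially many witnesses. The paper avoids the module problem entirely: Proposition \ref{proposition:abelian_wreath_reduction} first arranges that the images stay \emph{reduced}, so that Lemma \ref{lemma:abelian_wreath_conjugacy_criterion} reduces non-conjugacy to keeping $O(n^2)$ specific elements of $A$ (the range values and their pairwise differences) away from $0$; each costs a quotient of size $\log(Cn)$, giving $|\overline{A}|\le \log(Cn)^{Cn^2}$ in Lemma \ref{lemma:abelian_base_reduction}, and then $(\log n)^{n^2}$, resp.\ $(\log n)^{n^{2k+2}}$, after raising to the power $|\overline{B}|$. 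Your sketch could likely be repaired along these lines, but as written the quantifier over $c$ is not handled.

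The lower bound is where there is a genuine gap: the proposed construction cannot reach $(\log n)^{n}$. To force every quotient of $A\cong\mathbb{Z}\oplus\cdots$ that detects a given coefficient to have index at least $q$, that coefficient must be divisible by every integer below $q$, hence have size at least $\lcm(1,\dots,q-1)\approx e^{q}$. So $\sim n$ support points each forcing index $\Omega(\log n)$ cost word length on the order of $n\cdot n$, not $n$; within a word-length budget of $n$, any scheme with $s$ points of coefficient size $m$ (so $sm\lesssim n$) detected ``independently'' yields at best $(\log m)^{s}\le 2^{O(n)}$, which does not dominate $(\log n)^{n}=2^{n\log\log n}$. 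Moreover the independence itself is the unproved step: an arbitrary finite quotient of $A\wr B$ does not split over support points and need not contain $(A/A_0)^{s}$. The paper's Proposition \ref{infinite_lower_bound} works differently: after retracting to $\mathbb{Z}\wr\mathbb{Z}\cong\mathbb{Z}[x,x^{-1}]\rtimes\mathbb{Z}$, Lemmas \ref{lemma:conjugacy through kernel}, \ref{ideal_normal} and \ref{conjugacy_class} reduce any finite quotient to one of the form $\bigl(\mathbb{Z}[x,x^{-1}]/(d,x^{b}-1)\bigr)\rtimes\mathbb{Z}/b\mathbb{Z}$, of size $d^{b}$; the pair uses only two or three support points but one huge coefficient $\alpha(i)=\lcm(1,\dots,q_i-1)$, which forces $d\ge q_i\approx\log n$ (else the coefficient vanishes and the images coincide), and a long translation part $2^{k_i}\approx n$, which forces $b>2^{k_i-1}$ via the divisibility argument of Lemma \ref{mod_ideal} (else the images become conjugate). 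Thus the exponent $n$ comes from the order of the image of the acting element, not from many independently detected support points; without such an analysis of all finite quotients, your lower bound does not go through.
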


By combining Theorem \ref{main_thm lamplighter} and Theorem \ref{main_thm infinite}, the following corollary gives the best known result for asymptotic behaviour of conjugacy separability of wreath products of finitely generated abelian groups with an infinite acting group.
\begin{corollary}
\label{corollary:main_corrolary}
Let $A$ be a finitely generated abelian group, and suppose that $B$ is an infinite, finitely generated abelian group.
     
Suppose that $A$ is a finitely generated abelian group. If $B$ has torsion free rank $1$, then
$$
(\log(n))^{n} \preceq \Conj_{A \wr B}(n) \preceq (\log(n))^{n^{2}}.
$$
If $B$ has torsion free rank $k > 1$, then
$$
(\log(n))^{n} \preceq \Conj_{A \wr B}(n) \preceq (\log(n))^{n^{2k+2}}.
$$

Suppose that $A$ is finite. If $B$ has torsion free rank $1$, then
$$
\Conj_{A \wr B}(n) \approx 2^n
$$
If $B$ has torsion free rank $k > 1$, then
$$
2^{n} \preceq \Conj_{A \wr B}(n) \preceq 2^{n^{2k}}.
$$
\end{corollary}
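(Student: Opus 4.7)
The plan is to observe that Corollary \ref{corollary:main_corrolary} is a direct consequence of Theorem \ref{main_thm lamplighter} and Theorem \ref{main_thm infinite}, so there is essentially nothing new to prove beyond carefully matching the cases.

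First, I would split the statement into its two halves. For the half where $A$ is finite, I would simply invoke Theorem \ref{main_thm lamplighter}: the case $\text{rank}(B)=1$ gives $\Conj_{A \wr B}(n) \approx 2^n$, and the case $\text{rank}(B) = k \geq 2$ gives $2^n \preceq \Conj_{A \wr B}(n) \preceq 2^{n^{2k}}$, which is literally the finite-$A$ portion of the corollary. For the half where $A$ is an infinite finitely generated abelian group, I would apply Theorem \ref{main_thm infinite} verbatim, reading off the $\text{rank}(B) = 1$ and $\text{rank}(B) \geq 2$ bounds from the statement. No argument beyond pointing to these two theorems is needed.

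The only delicate point is that the first half of the corollary is phrased as \emph{``Suppose $A$ is a finitely generated abelian group''} without explicitly requiring $A$ to be infinite, so one should briefly remark on consistency: when $A$ is finite, the sharper bounds from Theorem \ref{main_thm lamplighter} apply and supersede the infinite-$A$ bounds of Theorem \ref{main_thm infinite}; since $\preceq$ is transitive and $2^n \preceq (\log n)^n$, the weaker bounds of the infinite-$A$ case remain valid statements, so nothing is contradicted. Thus one can either read the first half of the corollary as the infinite-$A$ case or as a uniform (looser) bound covering both, and in either reading the proof is a direct appeal to the two theorems. There is no genuine obstacle; the work is entirely contained in the proofs of Theorems \ref{main_thm lamplighter} and \ref{main_thm infinite}.
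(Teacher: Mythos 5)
Your main move is exactly what the paper does: Corollary \ref{corollary:main_corrolary} is stated immediately after the two theorems with no separate argument, precisely because it is nothing more than the juxtaposition of Theorem \ref{main_thm lamplighter} (finite $A$) and Theorem \ref{main_thm infinite} (infinite $A$). So the citation-only proof is the intended one.

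However, your ``consistency'' remark is backwards, and as stated it is false. You claim that when $A$ is finite the infinite-$A$ bounds ``remain valid statements'' because $2^n \preceq (\log n)^n$. But a lower bound becomes \emph{stronger}, not weaker, when the bounding function is larger: asserting $(\log(n))^{n} \preceq \Conj_{A \wr B}(n)$ for finite $A$ with $B$ of torsion-free rank $1$ would combine with $\Conj_{A \wr B}(n) \approx 2^n$ to give $(\log(n))^{n} \preceq 2^n$, which fails, since $(\log(Cn))^{Cn} = 2^{Cn\log\log(Cn)}$ eventually outgrows $C\,2^{Cn}$ for every constant $C$. (The upper bound $(\log(n))^{n^2}$ does remain valid for finite $A$, but the lower bound does not.) So the first half of the corollary cannot be read as a uniform bound covering finite $A$; it must be read with the hypothesis of Theorem \ref{main_thm infinite}, namely that $A$ is an \emph{infinite} finitely generated abelian group, the word ``infinite'' having been dropped in the corollary's phrasing. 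With that reading fixed, your proof is a direct and correct appeal to the two theorems, exactly as in the paper.
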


This last theorem applies Theorem \ref{main_thm lamplighter} and Theorem \ref{main_thm infinite} to provide exponential lower bounds for conjugacy separable wreath products $A \wr B$ where $\mathbb{Z} \leq Z(B)$ or where $B$ has an infinite cyclic subgroup as a retract.

\begin{theorem}
    \label{theorem:acting with cyclic retract central subgroup}
    Let $A$ be a nontrivial finitely generated abelian group, and suppose that $G$ is a conjugacy separable finitely generated group with separable cyclic subgroups that contains an infinite cyclic group as a retract or satisfies $\mathbb{Z} \leq Z(B)$. If $A$ is finite, then
$$
2^n \preceq \Conj_{A \wr G}(n).
$$
Otherwise,
$$
(\log(n))^{n}  \preceq \Conj_{A \wr G}(n).
$$ 
\end{theorem}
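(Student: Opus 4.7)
The plan is to transfer the lower bounds from Theorem \ref{main_thm lamplighter} and Theorem \ref{main_thm infinite} on $\Conj_{A \wr \mathbb{Z}}$ up to $\Conj_{A \wr G}$ by exploiting the distinguished infinite cyclic subgroup $\langle z \rangle \leq G$. In both hypothesis cases, the inclusion $\langle z \rangle \hookrightarrow G$ induces a natural inclusion $\iota \colon A \wr \mathbb{Z} \hookrightarrow A \wr G$, and I would let $f_n, g_n \in A \wr \mathbb{Z}$ be the witnesses that realise the lower bound at word length $O(n)$. Granting that $\iota(f_n)$ and $\iota(g_n)$ remain non-conjugate in $A \wr G$, a general restriction principle closes the argument: for any finite quotient $\pi \colon A \wr G \to Q$ of size $N$ with $\pi(\iota(f_n))$ not conjugate to $\pi(\iota(g_n))$ in $Q$, the restriction $\pi|_{\iota(A \wr \mathbb{Z})}$ produces a finite quotient of $A \wr \mathbb{Z}$ of size at most $N$ that still separates $f_n, g_n$ up to conjugacy, because conjugacy in a subgroup implies conjugacy in the ambient group. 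Remeslennikov's classification, combined with the conjugacy separability of $G$ and separability of its cyclic subgroups, ensures that $A \wr G$ is itself conjugacy separable, so that $\Conj_{A \wr G}$ is finite-valued.

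In the retract case, let $r \colon G \to \langle z \rangle$ be a retraction. Functoriality of the wreath-product construction extends $r$ to a homomorphism $\tilde r \colon A \wr G \to A \wr \mathbb{Z}$ that applies $r$ to the $G$-coordinate and pushes base-group entries forward along the fibres of $r$; together with $\iota$ this exhibits $A \wr \mathbb{Z}$ as a retract of $A \wr G$. If $\iota(f_n)$ and $\iota(g_n)$ were conjugate in $A \wr G$ by some $y$, applying $\tilde r$ would produce $\tilde r(y) \in A \wr \mathbb{Z}$ as a conjugator in $A \wr \mathbb{Z}$, contradicting the choice of witnesses. The Lipschitz control on word length is secured by adjoining $\iota$ of a finite generating set of $A \wr \mathbb{Z}$ to any fixed finite generating set of $A \wr G$.

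In the central case $\langle z \rangle \leq Z(G)$, I would argue that the lower-bound proofs of Theorem \ref{main_thm lamplighter} and Theorem \ref{main_thm infinite} proceed by counting shift-inequivalent finitely supported functions $\mathbb{Z} \to A$, so the witnesses may be taken of the form $f_n = (\alpha_1, 1)$ and $g_n = (\alpha_2, 1)$ with $\alpha_1, \alpha_2$ supported on $\{z^i : 0 \leq i \leq n\}$. For such elements, conjugation by an arbitrary $(h, x) \in A \wr G$ sends $(\alpha_i, 1)$ to $(x \cdot \alpha_i, 1)$, since the base group is abelian. Because $z$ is central, the cosets $x \langle z \rangle$ and $\langle z \rangle$ are either equal or disjoint, so the equality $x \cdot \alpha_1 = \alpha_2$ with both supports inside $\langle z \rangle$ forces $x \in \langle z \rangle$. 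Hence conjugacy of these witnesses in $A \wr G$ coincides with conjugacy in $A \wr \mathbb{Z}$, and non-conjugacy lifts.

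The main obstacle is verifying, in the central case, that the lower-bound witnesses in Theorems \ref{main_thm lamplighter} and \ref{main_thm infinite} may indeed be chosen with trivial $\mathbb{Z}$-component; should the original constructions use elements $(\alpha_i, z^k)$ with $k \neq 0$, the wreath-product conjugacy analysis becomes considerably more intricate, and one would then need to exploit centrality of $z$ together with separability of $\langle z \rangle$ to show that any conjugator in $A \wr G$ between the images may be taken to lie in $A \wr \mathbb{Z}$, for instance by passing to a finite quotient $G \to \bar G$ in which $\bar z$ generates a distinguished cyclic subgroup and arguing at the level of the induced wreath products.
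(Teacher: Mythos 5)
Your retract case is essentially the paper's own argument: the retraction $r \colon G \to \langle z \rangle$ extends (using that $A$ is abelian, so summing base entries over the fibres of $r$ is a homomorphism) to a retraction $\tilde r \colon A \wr G \to A \wr \langle z \rangle \cong A \wr \mathbb{Z}$, retracts are conjugacy embedded, the restriction principle (Lemmas \ref{conj_embedded_conj_depth} and \ref{lemma:retracts lower bound}) transfers conjugacy depth, and Corollary \ref{corollary:main_corrolary} supplies the bounds $2^n$, respectively $(\log n)^n$, for $A \wr \mathbb{Z}$. Conjugacy separability of $A \wr G$ from the stated hypotheses is also correctly invoked. That half is fine, and the Lipschitz remark about generating sets is harmless.

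The central case, however, has a genuine gap, which you partly flag yourself but do not close. Your argument requires the lower-bound witnesses for $A \wr \mathbb{Z}$ to have trivial acting coordinate, i.e.\ to be of the form $(\alpha_i, 0)$ supported on $\langle z \rangle$. The witnesses actually constructed in Proposition \ref{rank_one_lamplighter} and Proposition \ref{infinite_lower_bound} are $(x^{q_i}-1, q_i)$, $(x-1+x^{q_i}-1, q_i)$ and $(\alpha(i)(x^{2^{k_i}}-1), 2^{k_i})$, $(\alpha(i)(x^{2^{k_i}}-1+x^{2^{k_i-1}}-1), 2^{k_i})$, with nonzero acting coordinate, and this is not incidental: the entire lower-bound mechanism rests on Lemma \ref{conjugacy_class}, by which the conjugacy class of $(P,m)$ is $\{(x^{\ell}P + (x^m-1)Q, m)\}$, a set that becomes very large modulo small cofinite ideals precisely because $m \neq 0$. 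For a base-group element $(P,0)$ the conjugacy class is only the set of shifts $(x^{\ell}P,0)$, and nothing in the paper (or in your proposal) shows that nonconjugate pairs of this restricted form force exponentially or superexponentially large quotients; so the witnesses cannot simply be ``taken'' with trivial $\mathbb{Z}$-component, and your conjugation computation $(h,x)(\alpha_i,1)(h,x)^{-1} = (x\cdot\alpha_i,1)$ does not apply to the elements that actually realise the bounds. Your fallback sketch (pass to a finite quotient of $G$ in which $\bar z$ is ``distinguished'') is not carried out and is not how the difficulty is resolved. The paper instead works directly with the genuine witnesses $(f, z^m)$, $(g, z^m)$ whose function parts are supported in the central copy of $\mathbb{Z}$: after passing to reduced form, the conjugacy criterion for wreath products from \cite{ferov_pengitore} shows that a conjugator in $A \wr G$ would have $G$-coordinate $c$ with $c \cdot f = g$ (here centrality of $z$ is what keeps the criterion manageable), and since $\supp(f), \supp(g)$ are nonempty subsets of $\langle z \rangle$ this forces $c \in \langle z \rangle$ and, after a short support computation, the whole conjugator into $A \wr \langle z \rangle$. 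Only then does non-conjugacy in $A \wr \mathbb{Z}$ lift to $A \wr G$, after which the restriction principle and Corollary \ref{corollary:main_corrolary} finish exactly as you describe. Without an argument of this kind for witnesses with nonzero acting coordinate, your central case is incomplete.
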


\subsection{Outline of the paper} $\:$ \newline
In Section \ref{section:preliminaries}, we recall standard mathematical notions and concepts that will be used throughout the paper. In particular, in subsection \ref{subsection:depth_fuctions}, we recall the notions of word length, depth functions and associated asymptotic notions. In subsection \ref{subsection:wreath products}, we recall the basic terminology of wreath products of groups. Finally, in subsection \ref{subsec:laurent}, we recall the notion of Laurent polynomial rings and show that groups of the form $R \wr \mathbb{Z}$, were $R$ is a commutative ring, can be realised as $R[x,x^{-1}] \rtimes \mathbb{Z}$ where $R[x,x^{-1}]$ is the ring of Laurent polynomials over the $R$ and $\mathbb{Z}$ acts on $R[x,x^{-1}]$ via multiplication by $x$. We finish this section by giving a criterion for conjugacy for such groups purely in terms of commutative algebra.

In Section \ref{sec:lower bounds} we use methods from commutative algebra to produce lower bounds for the conjugacy depth functions by constructing infinite sequences of pairs of  non-conjugate elements that require quotients that are at least exponentially large in the word lengths of the nonconjugate pair of elements in order to remain non-conjugate.

In Section \ref{sec:upper bounds}, we use combinatorial methods together with the conjugacy criterion for wreath products of abelian groups to construct upper bounds for wreath products of abelian groups.

Finally, in Section \ref{sec:proofs} we combine the lower bounds obtained in Section \ref{sec:lower bounds} together with the upper bounds constructed in Section \ref{sec:upper bounds} to prove Theorem \ref{theorem:acting with cyclic retract central subgroup}. We then proceed to apply our methods to give lower bounds on the conjugacy depth function for wreath products where the acting group may not necessarily be abelian.

\section{Preliminaries}
\label{section:preliminaries}
We denote $\mathbb{F}_{p}$ as the finite field of $p$ elements where $p$ is prime. We denote $\Sym(n)$ as the symmetric group on $n$ letters. For $x,y \in G$, we write $x \sim_G y$ if there exists an element $z \in G$ such that $zxz^{-1} = y$ and suppress the subscript when the group $G$ is clear from context. Whenever the given group is abelian, we will use additive notation.

We say that a subgroup $H \leq G$ is \textbf{conjugacy embedded} in $G$ if for every $f,g \in H$ we have that $f\sim_H g$ if and only if $f \sim_G g$. Following the definition, one can easily check that the relation of being conjugacy embedded is transitive. That means if $A \leq B \leq C$ such that $A$ is conjugacy embedded in $B$ and $B$ is conjugacy embedded in $C$, then $A$ is conjugacy embedded in $C$.

Given a group $G$, we say that a subgroup $R \leq G$ is a \textbf{retract} of $G$ if there exists a surjective homomorphism $\rho \colon G \to R$ such that $\rho \restriction_{R} = \id_R$. The following remark is a natural consequence of the definition of being a retract.
\begin{remark}
\label{remark:retracts are conjugacy embedded}
    Let $G$ be a group, and let $R \leq G$ be a subgroup. If $R$ is a retract of $G$, then $R$ is conjugacy embedded in $G$
\end{remark}

The next lemma allows us to reduce the study of conjugacy in a semidirect product of abelian groups $A \rtimes B$ to conjugacy in $A \rtimes (B / K)$ where $K$ is the kernel of the action of $B$ on $A$. Since wreath products are a special type of a semidirect product, this lemma will be useful throughout the article. Finally, in this lemma, we will be using additive notation, with $B$ acting on $A$ by multiplication, i.e. for $b_1, b_2 \in B$ and $a\in A$ we write
\begin{align*}
        b_1 \cdot b_2 \cdot a_1 &= (b_1 + b_2)\cdot a_1,\\
        0 \cdot a &= a\\
        b_1 \cdot (-a) &= - (b_1 \cdot a) = -b_1 \cdot a.
\end{align*}
In particular, for $a_1, a_2 \in A$ and $b_1, b_2 \in B$, we write
\begin{align*}
    (a_1, b_1)(a_2, b_2) &= (a_1 + b_1 \cdot a_2, b_1 + b_2),\\
    (a_1, b_1)(a_2, b_2) &= (a_1 + b_1 \cdot a_2, b_1 + b_2),\\
    (a_1, b_1)^{-1} &= (-(-b_1) \cdot a_1, -b_1) = ((-b_1)(-a_1), -b_1)
\end{align*}
\begin{lemma}
\label{lemma:conjugacy through kernel}
Suppose that $A$ and $B$ are finitely generated abelian groups, and suppose that $a_1, a_2 \in A$ and $b \in B$. Then 
$$
(a_1,b) \sim_{A \rtimes B} (a_2, b) \quad \text{ if and only if } \quad (a_1, \bar{b}) \sim_{A \rtimes (B/K)} (a_2, \bar{b}),
$$
where $K$ is the kernel of the action of $B$ on $A$.
\end{lemma}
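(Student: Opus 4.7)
The plan is to make the direct computation of what conjugacy of $(a_1,b)$ and $(a_2,b)$ means in $A \rtimes B$, and then observe that the resulting condition depends on the conjugating element $(c,d)$ only through $c$ and $\bar d \in B/K$, so that lifting and projecting along $B \twoheadrightarrow B/K$ give an immediate translation between the two conjugacy problems.

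First I would unpack the conjugation formula. Using the identities for the semidirect product listed before the statement, for any $(c,d) \in A \rtimes B$ one computes
\begin{align*}
(c,d)(a_1,b)(c,d)^{-1}
&= (c + d\cdot a_1,\, d+b)\bigl((-d)\cdot(-c),\, -d\bigr) \\
&= \bigl(c + d\cdot a_1 + (d+b)\cdot((-d)\cdot(-c)),\, b\bigr) \\
&= \bigl(c + d\cdot a_1 - b\cdot c,\, b\bigr),
\end{align*}
using associativity $(d+b-d)\cdot(-c) = b\cdot(-c) = -b\cdot c$. Thus $(a_1,b)\sim_{A\rtimes B}(a_2,b)$ if and only if there exist $c \in A$ and $d \in B$ with $a_2 = d\cdot a_1 + c - b\cdot c$. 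The same computation carried out in $A \rtimes (B/K)$ shows that $(a_1,\bar b)\sim_{A\rtimes(B/K)}(a_2,\bar b)$ if and only if there exist $c' \in A$ and $\bar d' \in B/K$ with $a_2 = \bar d'\cdot a_1 + c' - \bar b\cdot c'$.

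For the forward direction, I would either invoke the natural homomorphism $\pi\colon A \rtimes B \to A \rtimes (B/K)$, $(a,b)\mapsto(a,\bar b)$ (which is well defined precisely because the $B$-action on $A$ factors through $B/K$) and use that homomorphisms preserve conjugacy, or equivalently observe that the witness $(c,d)$ from $A \rtimes B$ produces the witness $(c,\bar d)$ in $A \rtimes (B/K)$. For the backward direction, given a witness $(c', \bar d')$ in $A \rtimes (B/K)$, I choose any lift $d \in B$ of $\bar d'$; since $B$ acts on $A$ through $B/K$, one has $d\cdot a_1 = \bar d'\cdot a_1$ and $b\cdot c' = \bar b\cdot c'$, so the equation $a_2 = \bar d'\cdot a_1 + c' - \bar b\cdot c'$ becomes $a_2 = d\cdot a_1 + c' - b\cdot c'$, giving the desired conjugator $(c',d) \in A \rtimes B$.

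There is really no obstacle here: the whole content is the observation that the first coordinate of $(c,d)(a_1,b)(c,d)^{-1}$ depends on $d$ only through its image in $B/K$. The only point to verify carefully is the conjugation computation in additive notation, so I would take care to write the sign conventions for $(c,d)^{-1}$ explicitly before equating witnesses in the two semidirect products.
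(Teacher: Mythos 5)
Your proof is correct and follows essentially the same route as the paper: both rest on the same explicit conjugation computation $(c,d)(a_1,b)(c,d)^{-1} = (c + d\cdot a_1 - b\cdot c,\, b)$, with the forward direction via the canonical projection and the backward direction by lifting the conjugator from $A \rtimes (B/K)$ to $A \rtimes B$ (the paper phrases this lift as an equation modulo the kernel $\{0\}\times K$ and deduces $k=0$, while you note the first coordinate depends on $d$ only through $\bar d$ — the same observation).
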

\begin{proof}
For this proof, we denote the action of $b \in B$ on $a \in A$ as $b \cdot a$. We note that if $(a_1, b) \sim_{A \rtimes B} (a_2, b)$, then clearly $(a_1, \bar{b}) \sim_{A \rtimes (B/K)} (a_2, \bar{b}).$ Thus, we may assume that $(a_1, \bar{b}) \sim_{A \rtimes (B/K)} (a_2, \bar{b})$. Suppose that there exists $(x,y) \in A \rtimes B$ and $k \in K$ such that 
$$
(x,y) (a_1,b) (x,y)^{-1} = (0,k) (a_2,b) .
$$ Thus, we have
\begin{eqnarray*}
(x,y) (a_1,b) (x,y)^{-1} &=& (0,k) (a_2,b) \\
(x + y\cdot a_1, y + b) ((-y)\cdot (-x), -y) &=& (k \cdot a_2, k+b) \\
(x + y\cdot a_1 + (y + b)\cdot( -y)\cdot (-x), y + b -y) &=& (a_2, k+b) \\
(x + y\cdot a_1 + (y + b - y)\cdot (-x), y + b -y) &=& (a_2, k+b) \\
(x + y\cdot a_1 - b \cdot x, b) &=& (a_2, k+b). \\ 
\end{eqnarray*}
Hence, we must have that $k = 0$. Therefore, we have $(x,y)(a_1,b)  (x,y)^{-1} = (a_2,b)$ giving our claim.
\end{proof}

Given an abelian group $B$, we will use $\Tor(B)$ to denote
$$
\Tor(B) =\{ b \in B: b^m = 1 \text{ for some } m \in \mathbb{Z} \}.
$$
When $B$ is a finitely generated, it it easy to see that $\Tor(B)$ is a characteristic subgroup which provides a splitting 
\begin{displaymath}
    B = \mathbb{Z}^k \oplus \Tor(B)    
\end{displaymath}
for some $k \in \mathbb{N}$. We say that $k$ is the \textbf{torsion-free rank} of $B$. By fixing a splitting, we define $\tau \colon B \to \Tor(B)$ and $\phi \colon B \to \mathbb{Z}^k$ as the associated  retractions. Then every element $b \in B$ can be uniquely expressed as $b = \tau(b) + \phi(b)$. We say that $\tau(b)$ is the \textbf{torsion part} of $b$ and $\phi(b)$ is the \textbf{torsion-free} part of $b$.
Whenever we say the torsion part or torsion-free part of an element of $B$, we are saying that with respect to some fixed splitting of the above form.

To ease notation, we will view direct sums of groups over some indexing set as finitely supported functions on the indexing set with range in the index groups. More precisely, if 
$$
G = \bigoplus_{i \in I} A
$$ is a direct sum of copies of a group $A$ indexed by a set $I$, then for $f \in G$ we will write $f(i)$ to denote the $i$-th coordinate of $f$. In particular, elements in $G$ correspond to functions $f \colon \mathcal{I} \to A$ where $f(i) = 1$ for all but finitely many elements in $\mathcal{I}$. The \textbf{support} of $f$ which is the set of elements on which $f$ is not trivial will be denoted as
\begin{displaymath}
    \supp(f) = \{i \in I \mid f(i) \neq 1\}.
\end{displaymath}
The \textbf{range} of $f$ will be denoted as
\begin{displaymath}
    \rng(f) = \{f(i) \mid i \in I\}.
\end{displaymath}

\subsection{Asymptotic notions and depth functions} $\:$ \newline
\label{subsection:depth_fuctions}
Let $G$ be a finitely generated group equipped with a finite generating subset $S$. We define the word length of an element $g \in G$ with respect to $S$ as
\begin{displaymath}
    \|g\|_S = \min \{ |w| \mid w \in F(S) \mbox{ and } w =_G g\}.    
\end{displaymath}
where $|w|$ denotes the word length of $w$ in $F(S).$  We use $\Ball_{G,S}(n)$ to denote the ball of radius $n$ centered around the identity with respect to the finite generating subset $S$. When the finite generating subset is clear from context, we will instead write $\Ball_G(n).$

%\textcolor{red}{Do we use the next paragraph?}
%If $G$ is a finitely generated group, we define the function $\D_G \colon G \backslash \{1\} \to \mathbb{N} \cup \{\infty\}$ as
%\begin{displaymath}
%    \D_G(g) = \min \{ |G/N| \: | \: N \trianglelefteq_{f.i.} G \mbox{ and } g \notin N\}.
%\end{displaymath}
%with the understanding that $\D_G(g) = \infty$ if no such finite quotient exists. We say that $\D_G(g)$ is the \textbf{residual finiteness depth} (depth for short) of $g \in G$. The depth of $g$ is the size of the smallest finite quotient of $G$ such that the image of $g$ under the canonical projection is not trivial. We say that $G$ is \textbf{residually finite} if $\D_G(g) < \infty$ for all $g \in G \backslash \{1\}$. Given a finite generating subset $S$ for a residually finite group $G$, the \textbf{residual finiteness depth function} $\RF_{G,S} \colon \mathbb{N} \to \mathbb{N}$ is defined as
%\begin{displaymath}
 %   \RF_{G,S}(n) = \max \{ D_G(g) \mid g \in \Ball_{G,S}(n), g \neq 1\}.
%\end{displaymath}

The \textbf{conjugacy separability depth function} of $G$ is defined in the following way. Let $f,g \in G$ be a pair of elements such that $f \not\sim_G g$. The \textbf{conjugacy depth} of the pair $(f,g)$, denoted $\CD_G(f,g)$, is given by
\begin{displaymath}
    \CD_G(f,g) = \min \{ |G/N| \: | \: N \trianglelefteq_{f.i.} G \mbox{ and } fN \not\sim_{G/N} gN \}
\end{displaymath}
with the understanding that $\CD_G(f,g) = \infty$ if no such finite quotient exits. Similar to the definition of residual finiteness, we say that $G$ is \textbf{conjugacy separable} if $\CD_G(g,h) < \infty$ for all $f,h \in G$ such that $f \nsim_G g.$  Given a finite generating subset $S \subseteq G$ for a conjugacy separable group $G$, the \textbf{conjugacy separability depth function} $\Conj_{G,S} \colon \mathbb{N} \to \mathbb{N}$ is defined as
\begin{displaymath}
\Conj_{G,S}(n) = \Max\{ \CD_G(f,g) \mid f,g \in \Ball_{G,S}(n) \mbox{ and } f\not\sim_G g\}.
\end{displaymath}

We note that $\Conj_{G,S}(n)$ depends on the choice of the finite generating subset $S$. However, one can easily check that the asymptotic behaviour does not.  It is well known that a change of a finite generating subset is a quasi-isometry. In particular, if $S_1, S_2 \subset G$ are two finite generating subsets of a group $G$, then $\|\cdot\|_{S_1} \approx \|\cdot\|_{S_2}$. The same holds for depth functions. For non-decreasing functions $f,g \colon \mathbb{N} \to \mathbb{N}$, we write $f \preceq g$ if there is a constant $C \in \mathbb{N}$ such that $f(n) \leq C g(Cn)$ for all $n \in \mathbb{N}$, and if $f \preceq g$ and $g \preceq f$, we then write $f \approx g$. When $G$ is conjugacy separable, we have $\Conj_{G, S_1}(n) \approx \Conj_{G, S_2}(n)$; see \cite{LLM} for more details. As we are only interested in the asymptotic behaviour of the above defined functions, we will suppress the choice of generating subset whenever we reference the depth functions or the word-length.

Let $G$ be a conjugacy separable group with a finitely generated conjugacy embedded subgroup $R$. We now relate the conjugacy depth of a pair of nonconjugate elements $r_1, r_2 \in R$ as elements of $R$ with the conjugacy depth of $r_1,r_2$ as elements of $G$.
\begin{lemma}\label{conj_embedded_conj_depth}
    Let $G$ be a finitely generated conjugacy separable group with a finitely generated conjugacy embedded subgroup $R \leq G$. Then for $r_1, r_2 \in R$ where $r_1 \nsim_R r_2$, we have
    $$
    \CD_R(r_1, r_2) \leq \CD_G(r_1, r_2).
    $$
\end{lemma}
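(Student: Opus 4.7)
The plan is to pull back a minimum-index witness from $G$ to $R$ by intersection. First I would use the hypothesis that $R$ is conjugacy embedded in $G$ together with $r_1 \nsim_R r_2$ to conclude $r_1 \nsim_G r_2$; conjugacy separability of $G$ then ensures that $\CD_G(r_1, r_2)$ is finite, so one can fix $N \normfileq G$ realising this value, meaning $|G/N| = \CD_G(r_1, r_2)$ and $r_1 N \nsim_{G/N} r_2 N$.

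Next I would set $M = N \cap R$ and verify it is the desired witness for $(r_1, r_2)$ inside $R$. Normality of $M$ in $R$ is immediate from normality of $N$ in $G$, and the natural map $R/M \to G/N$ sending $rM \mapsto rN$ is injective, so $M$ has finite index in $R$ with $|R/M| \leq |G/N|$.

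The last step is to show that the nonconjugacy of $r_1$ and $r_2$ survives in $R/M$. I would run a short contradiction argument: if there were some $s \in R$ with $s r_1 s^{-1} \in r_2 M$, then since $M \subseteq N$ the same element $s$, viewed inside $G$, would witness $s r_1 s^{-1} \in r_2 N$, contradicting the defining property of $N$. Combining the two estimates gives $\CD_R(r_1,r_2) \leq |R/M| \leq |G/N| = \CD_G(r_1, r_2)$.

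There is no really hard step here; the argument is essentially formal once the correct candidate $M = N \cap R$ is identified. The only place that requires any care is noting that $\CD_G(r_1, r_2)$ is actually finite, which is exactly where the conjugacy embedded hypothesis is used — without it, one could have $r_1 \sim_G r_2$ while $r_1 \nsim_R r_2$, making the right-hand side infinite and the content of the lemma vacuous.
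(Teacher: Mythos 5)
Your argument is correct and is essentially the paper's: the paper fixes $N \normfileq G$ realising $\CD_G(r_1,r_2)$ and restricts the projection to $R$, whose image is exactly $R/(N\cap R)$, which is your $R/M$; the nonconjugacy of the images and the bound $|R/M| \leq |G/N|$ are handled just as you do. No issues.
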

\begin{proof}
Suppose that $r_1 \nsim_R r_2$ for $r_1, r_2 \in R.$ Since $R$ is conjugacy embedded into $G$, we have that $r_1 \nsim_G r_2$. hus, we will show that $\CD_R(r_1, r_2) \leq \CD_G(r_1, r_2)$. Suppose that $N_R \trianglelefteq_{f.i} G$ realises $\CD_G(r_1, r_2)$. Since $\varphi|_R \colon R \to \varphi(R) \leq G/N_G$, we see that $\varphi|_R(r_1) \nsim \varphi|_R(r_2).$ Hence, we have
    $$
    \CD_R(r_1, r_2) \leq |\varphi(R)| \leq |G / N_G| = \CD_G(r_1,r_2). \qedhere
    $$
\end{proof}

We conclude this subsection with the following lemma which relates $\Conj_G(n)$ with $\Conj_R(n)$ where $R$ is a retract of a finitely generated conjugacy separable group $G$.
\begin{lemma}
    \label{lemma:retracts lower bound}
    Suppose that $G$ is a finitely generated conjugacy separable group with a finitely generated subgroup $R \leq G$ such that $R$ is a retract. Then $R$ is conjugacy separable and conjugacy embedded into $G$. Moreover, we have
    \begin{displaymath}
        \Conj_R(n) \preceq \Conj_G(n).
    \end{displaymath}
\end{lemma}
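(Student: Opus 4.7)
The plan is to prove the three assertions in order: (i) conjugacy embedding, (ii) conjugacy separability of $R$, and (iii) the asymptotic inequality.

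For (i) I would simply invoke Remark \ref{remark:retracts are conjugacy embedded}, which already states that any retract is conjugacy embedded, so nothing more is required.

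For (ii), I would take $r_1, r_2 \in R$ with $r_1 \nsim_R r_2$. By (i) we also have $r_1 \nsim_G r_2$, and by conjugacy separability of $G$ there is a normal finite-index subgroup $N \trianglelefteq_{f.i.} G$ with $r_1 N \nsim_{G/N} r_2 N$. The subgroup $R \cap N$ is normal in $R$, and the inclusion $R \hookrightarrow G$ descends to an injective homomorphism $R/(R \cap N) \hookrightarrow G/N$ by the second isomorphism theorem, so $R/(R \cap N)$ is finite. If the cosets $r_1(R \cap N)$ and $r_2(R \cap N)$ were conjugate in $R/(R \cap N)$, then their images in $G/N$ would be conjugate there, contradicting the choice of $N$. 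Hence $R/(R \cap N)$ is a finite quotient of $R$ witnessing the non-conjugacy of $r_1$ and $r_2$, so $R$ is conjugacy separable.

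For (iii), I would fix finite generating sets $S_G$ of $G$ and $S_R$ of $R$. Since each generator in $S_R$ has some finite word length in $S_G$, there exists a constant $C \in \mathbb{N}$ with $\|r\|_{S_G} \leq C \|r\|_{S_R}$ for every $r \in R$; equivalently, $\Ball_{R, S_R}(n) \subseteq \Ball_{G, S_G}(Cn)$. Now let $r_1, r_2 \in \Ball_{R,S_R}(n)$ with $r_1 \nsim_R r_2$. Since $R$ is conjugacy embedded in $G$ by (i), Lemma \ref{conj_embedded_conj_depth} applies and gives
\begin{displaymath}
\CD_R(r_1, r_2) \leq \CD_G(r_1, r_2) \leq \Conj_{G, S_G}(Cn),
\end{displaymath}
where the last inequality uses $r_1, r_2 \in \Ball_{G, S_G}(Cn)$ and $r_1 \nsim_G r_2$. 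Taking the maximum over all such pairs yields $\Conj_{R, S_R}(n) \leq \Conj_{G, S_G}(Cn)$, hence $\Conj_R \preceq \Conj_G$.

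No step looks genuinely difficult: the main point to be careful about is (ii), where one must not try to push $N$ forward via the retraction $\rho$ (which need not send $N$ to something normal), but instead pull back via the inclusion using the finite-index subgroup $R \cap N$. The retract hypothesis is used only through Remark \ref{remark:retracts are conjugacy embedded}; once conjugacy embedding is in hand, parts (ii) and (iii) go through for any finitely generated conjugacy embedded subgroup of a conjugacy separable group.
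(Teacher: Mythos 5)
Your proposal is correct and follows essentially the same route as the paper: conjugacy embedding via Remark \ref{remark:retracts are conjugacy embedded}, non-conjugacy preserved in $R/(R\cap N) \hookrightarrow G/N$ (which is exactly the content and proof of Lemma \ref{conj_embedded_conj_depth}, and also yields the conjugacy separability of $R$ that the paper leaves implicit), and then a comparison of balls. The only cosmetic difference is in step (iii): the paper builds the special generating set $X = X_R \,\dot\cup\, X_K$ from the retraction so that $\|r\|_{X_R} = \|r\|_X$ exactly, whereas you settle for $\Ball_{R,S_R}(n) \subseteq \Ball_{G,S_G}(Cn)$ with a constant $C$, which is all the relation $\preceq$ requires.
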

\begin{proof}
    Let $\rho \colon G \to R$ be the corresponding retraction. We start by showing there is a finite generating set $X \subseteq G$ such that $X = X_R \dot\cup X_K$, $R = \langle X_R \rangle,$ and $\langle X_K \rangle \cap R = \{1\}$. Suppose that $G = \langle X'\rangle$, where $X = \{x_1, \dots, x_m\}$. We set $X_R = \{\rho(x_1),\dots, \rho(x_m)\}$ and $X_K = \{\rho(x_1)^{-1}x_1, \dots, \rho(x_m)^{-1}x_m\}$. It is straightforward to see that $R = \langle X_R \rangle$ and $\langle X_K \rangle \leq \ker(\rho)$, so $\langle X_K \rangle \cap R = \{1\}$. We also have $\|r\|_{X_R} = \|r\|_X$ for every $r \in R$.
    
    Now suppose that $r_1, r_2 \in \Ball_R(n)$ satisfy $r_1 \not\sim_R r_2$. Following the previous paragraph together with remark \ref{remark:retracts are conjugacy embedded}, we see that $r_1, r_2 \in \Ball_G(n)$ and that $r_1 \not\sim_G r_2$. By Lemma \ref{conj_embedded_conj_depth}, we see that
    $$
\CD_R(r_1, r_2) \leq \CD_G(r_1, r_2).
    $$
    We note that this inequality holds for all $r_1,r_2 \in R$ where $r_1 \nsim_R r_2$, and since $B_R(n) \subset B_G(n)$, it follows that 
    $$
    \Conj_R(n) \leq \Conj_G(n). \qedhere
    $$
\end{proof}

\subsection{Wreath products} $\:$ \newline
\label{subsection:wreath products}
For groups $A$ and $B$, we denote the restricted wreath product of $A$ and $B$, written as $A \wr B$, by
    \begin{displaymath}
        A \wr B = \left(\bigoplus_{b \in B} A \right) \rtimes B.
    \end{displaymath}
where $B$ acts on $\bigoplus_{b \in B} A$ via left multiplication on the coordinates. An element $f \in \bigoplus_{b \in B} A$ is understood as a function $f \colon B \to A$ such that $f(b) \neq 1$ for only finitely many $b \in B$. With a slight abuse of notation, we will use $A^B$ to denote $\bigoplus_{b \in B} A$. The action of $B$ on $A^B$ is then realised as $b \cdot f(x) = f(b x)$. 

Following the given notation, if $H \leq A$ and $K \leq B$, we will use $H^K$ to denote the subset
\begin{displaymath}
    H^K = \{f \in A^B \mid \supp(f) \subseteq K \mbox{ and } \rng(f) \subseteq H\}.
\end{displaymath}
Keeping this notation in mind, the wreath product $H \wr K$ can then be naturally identified with the subgroup $H^K \rtimes K \leq A \wr B$.

\begin{lemma}
    \label{lemma:conjugacy embedded subgroups}
    Let $A,B$ be finitely generated abelian groups, and suppose that $R_A \leq A$ and $R_B \leq B$ are retracts. Then the group $R = R_A \wr R_B$ is a retract of $G = A \wr B$. In particular, $R$ is conjugacy embedded in $G$ and $\Conj_R(n) \preceq \Conj_G(n)$.
\end{lemma}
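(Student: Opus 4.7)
The plan is to construct an explicit retraction $\rho \colon A \wr B \to R_A \wr R_B$ directly from the given retractions $\rho_A \colon A \to R_A$ and $\rho_B \colon B \to R_B$. Once this is in hand, the ``in particular'' clauses follow immediately from Lemma~\ref{lemma:retracts lower bound}: I just need to note that $R_A$ and $R_B$, being retracts of finitely generated groups, are themselves finitely generated, hence so is $R = R_A \wr R_B$; and that $G = A \wr B$ is conjugacy separable by Remeslennikov's classification \cite{remeslennikov}, supplying the remaining hypothesis of that lemma.

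For the construction, given $(f, b) \in A^B \rtimes B$ I would set $\rho(f, b) = (\bar f, \rho_B(b))$, where $\bar f \colon R_B \to R_A$ is the fibre-collected function
\begin{displaymath}
    \bar f(r) = \sum_{b' \in \rho_B^{-1}(r)} \rho_A(f(b')).
\end{displaymath}
Since $f$ has finite support, each such sum has only finitely many nonzero terms and only finitely many $r \in R_B$ receive a nonzero contribution, so $\bar f \in R_A^{R_B}$.

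The verification then splits into two routine checks. First, $\rho$ is a homomorphism: the second coordinate is clear, while for the first coordinate additivity of $\rho_A$ together with a reindexing of fibres under the translation $b' \mapsto b_1 + b'$ yields the equivariance identity $\overline{b_1 \cdot f_2} = \rho_B(b_1) \cdot \bar f_2$, which combined with additivity gives multiplicativity in $A \wr B$. Second, $\rho$ restricts to the identity on $R_A \wr R_B$: for $f \in R_A^{R_B}$ and $r \in R_B$, only the element $b' = r$ of $\rho_B^{-1}(r)$ can lie in $\supp(f) \subseteq R_B$, since $\rho_B|_{R_B} = \id$; and $\rho_A(f(r)) = f(r)$ as $f(r) \in R_A$.

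I expect the main obstacle to be bookkeeping rather than a genuine obstruction. One must be careful with the precise convention for the action of $B$ on $A^B$, namely $(b \cdot f)(x) = f(bx)$, in the equivariance step, because a misalignment in the substitution would scramble the fibres of $\rho_B$ in the sum defining $\overline{b_1 \cdot f_2}$ and break the claim that $\rho$ preserves the semidirect-product multiplication.
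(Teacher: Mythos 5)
Your proposal is correct and takes essentially the same approach as the paper: both construct the retraction $\rho\colon A\wr B\to R_A\wr R_B$ by collapsing the base function along fibres, applying $\rho_A$, and mapping the acting coordinate by $\rho_B$, and then deduce the ``in particular'' claims from Remark \ref{remark:retracts are conjugacy embedded} and Lemma \ref{lemma:retracts lower bound}. If anything, your fibre-indexed formula $\bar f(r)=\sum_{b'\in\rho_B^{-1}(r)}\rho_A(f(b'))$ is the cleaner rendering of the paper's coset-sum, and your explicit remarks that $R$ is finitely generated and that $A\wr B$ is conjugacy separable (via Remeslennikov) make precise hypotheses of Lemma \ref{lemma:retracts lower bound} that the paper leaves implicit.
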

\begin{proof}
    Let $\rho_A \colon A \to R_A$ and $\rho_B \colon B \to R_B$ be the associated retraction maps. We define a map $\rho \colon A\wr B \to R_A \wr R_B$ in a following way: given $f \in A^B, b\in B$ we set $\rho((f,b)) = \rho(f) \rho_B(b)$ where $\rho(f)$ is a function in $R_A^{R_B}$ defined as
    \begin{displaymath}
        \rho(f)(x R_B) = \rho_A \left(\prod_{y \in x R_b} f(y) \right).
    \end{displaymath}
    We see that $\rho$ is a surjective homomorphism and $\rho|_{R} = \id_{R}$, and thus, it follows that $R$ is a retract of $A \wr B$. We finish by noting that Remark \ref{remark:retracts are conjugacy embedded} implies $R$ is conjugacy embedded in $A \wr B$.
\end{proof}
    
Suppose that $b \in B$ and $f \in A^B$ is a function with a finite support. We say that $f$ is \textbf{minimal} with respect to $b$ if all elements of $\supp(f)$ lie in distinct cosets of $\langle b\rangle$ in $B$. We will say that an element $fb \in A \wr B$ is \textbf{reduced} if $f$ is minimal with respect to $b$.
    
The following lemma is a special case of \cite[Lemma 5.13]{ferov_pengitore}.
\begin{lemma}\label{lemma: lemma5.13}
    \label{lemma:reduced_elements}
    Let $A$, $B$ be finitely generated groups, and suppose that $b \in B$ and $f \colon B \to A$ are given such that $fb \in \Ball_{A \wr B}(n)$. Then there exists a constant $C$ independent of $b$ and $f$ and $f' \in A^B$ such that the following hold:
    \begin{enumerate}
    \item $f'b \sim fb$
    \item $f'b$ is reduced
    \item $\|f'b\| \leq C\|fb\|$.
    \end{enumerate}
\end{lemma}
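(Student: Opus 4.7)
The strategy is to use conjugation by base-group elements to ``merge'' the support values of $f$ that lie in the same $\langle b\rangle$-coset of $B$, producing a reduced representative whose word length is controlled by the original.

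First, I would verify the key identity: conjugating $(f,b) \in A \wr B$ by a base-group element $(h, 1_B)$ leaves the $B$-coordinate equal to $b$ and replaces the $A^B$-coordinate by a specific combination of $f$ with an appropriate $b$-translate of $h$. A direct calculation shows that $A^B$-conjugation on the $A^B$-component, with $b$ held fixed, has orbits parametrised by the ``coset product'' assigning to each $\langle b\rangle$-coset $C$ in $B$ the product of the values of $f$ along $C$, taken in a fixed order relative to a chosen representative of $C$. The reduced representatives of $(f,b)$ are exactly the functions supported on one element per such coset.

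Next, I would construct $f'$ explicitly. Let $C_1, \dots, C_m$ be the $\langle b\rangle$-cosets meeting $\supp(f)$. For each $C_i$, pick $r_i \in C_i \cap \supp(f)$ whose distance from $1_B$ in the Cayley graph of $B$ is minimal, breaking ties by a fixed rule. Set $f'(r_i)$ equal to the coset product of $f$ along $C_i$ evaluated at $r_i$, and $f'(y) = 1_A$ otherwise. Then $\supp(f')$ consists of representatives of distinct $\langle b\rangle$-cosets, so $f'b$ is reduced, and iterated base-group conjugations (one per element of $\supp(f) \setminus \{r_1, \dots, r_m\}$) show $(f, b) \sim_{A\wr B} (f', b)$.

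The main obstacle is the length bound $\|f'b\| \leq C\|fb\|$. I would invoke the standard two-sided estimate for the word metric on a wreath product: up to a uniform multiplicative constant depending only on the chosen finite generating sets,
\begin{equation*}
\|(g, b)\|_{A \wr B} \text{ is comparable to } \|b\|_B + T(\supp(g) \cup \{1_B, b\}) + \sum_{x \in \supp(g)} \|g(x)\|_A,
\end{equation*}
where $T(S)$ is the length of a shortest walk in the Cayley graph of $B$ from $1_B$ to $b$ visiting every point of $S$. Since $\supp(f') \subseteq \supp(f)$ the travel term does not increase, and the triangle inequality in $A$ gives $\sum_{i} \|f'(r_i)\|_A \leq \sum_{x \in \supp(f)} \|f(x)\|_A$. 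Combining the two sides of the comparison yields $\|f'b\| \leq C\|fb\|$ with $C$ depending only on the generating sets. The main technical point is justifying the travel-cost formula (which is standard but requires care when $B$ is nonabelian); the remaining steps are routine bookkeeping.
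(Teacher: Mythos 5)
The paper itself contains no proof of this lemma: it is recorded as a special case of Lemma~5.13 of the authors' earlier paper \cite{ferov_pengitore}, so the only in-paper content is the citation, and your blind argument is exactly the kind of direct proof that citation stands in for. Your proof is correct. With the paper's convention $b\cdot f(x)=f(bx)$, conjugating $(f,b)$ by a base element $(h,1)$ replaces $f(x)$ by $h(x)f(x)h(bx)^{-1}$, and choosing $h$ to telescope along each $\langle b\rangle$-orbit meeting $\supp(f)$ collects the lamps of that coset at a single chosen point $r_i\in\supp(f)$, the new value being an ordered product of the old values; this yields (1) and (2), and it works whether $b$ has infinite or finite order. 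For (3), the travelling-salesman description of the word metric on $A\wr B$ (exact for the standard generating set $S_A\cup S_B$, hence a two-sided estimate up to constants for any finite generating set; this is the same circle of results as the Davis--Olshanskii theorem the paper invokes for Lemma~\ref{lemma:bounding_support_and_image}) gives the bound, since $\supp(f')\subseteq\supp(f)$ means the travel term does not grow and the triangle inequality in $A$ bounds $\sum_i\|f'(r_i)\|_A$ by $\sum_{x\in\supp(f)}\|f(x)\|_A$; the constant depends only on the generating sets, as required. One small imprecision worth noting: your parenthetical claim that the base-conjugation orbits (with $b$ fixed) are parametrised by the coset products is literally true only for infinite $\langle b\rangle$-orbits; when $b$ has finite order the product is an invariant only up to cyclic permutation, and for nonabelian $A$ basepoint issues arise. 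But this characterisation is never used in your argument --- only the existence of the merging conjugator and the fact that the merged value is an ordered product of the original values matter --- so the proof stands as written.
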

    
The following statement and its proof which provides a conjugacy criterion for wreath products of abelian groups follows from \cite[Lemma 5.14]{ferov_pengitore}. 
\begin{lemma}
    \label{lemma:abelian_wreath_conjugacy_criterion}
    Let $A,B$ be abelian groups, and let $G = A \wr B$ be their wreath product. Let $f_1, f_2 \in A^B$, $b_1, b_2 \in B$ be such that the elements $f_1b_1$ and $f_2 b_2$ are reduced. Then 
    $$f_1 b_1 \sim_G f_2 b_2 \quad \text{ if and only if } \quad b_1 = b_2 \quad \text{ and } \quad f_1b \in (f_2b)^B.
    $$ 
    In particular, there exists an element $c \in B$ such that 
    $$
    c \supp(f_1) = \supp(f_2) \quad \text{ and } \quad f_1(cx) = f_2(x)
    $$
    for all $x \in B$.
\end{lemma}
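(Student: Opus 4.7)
My plan is to first establish that $b_1 = b_2$ using the abelian structure of $B$, then reformulate the remaining conjugacy condition in terms of the image of the operator $(1-b)$ acting on $A^B$, and finally use minimality of $f_1$ and $f_2$ to extract a single global conjugator $c \in B$. The necessity of $b_1 = b_2$ follows immediately from projecting $A \wr B \to B$ via $(f,b) \mapsto b$: this is a homomorphism, so conjugate elements of $A \wr B$ have conjugate -- hence equal, since $B$ is abelian -- $B$-components; write $b_1 = b_2 =: b$. A direct calculation with the semidirect product formulas laid out before Lemma \ref{lemma:conjugacy through kernel} gives
\[
(g, c)(f_1, b)(g, c)^{-1} = (g + c\cdot f_1 - b\cdot g,\ b),
\]
so $(f_1, b) \sim_G (f_2, b)$ holds if and only if there exist $g \in A^B$ and $c \in B$ with $f_2 - c\cdot f_1 = g - b\cdot g$, i.e., $f_2 - c \cdot f_1 \in \Im(1-b) \subseteq A^B$. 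The ``if'' direction of the lemma then follows by setting $g = 0$ and conjugating by $(0, c)$.

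For the converse, I would decompose $A^B = \bigoplus_{C \in B/\langle b\rangle} A^C$ along the $\langle b\rangle$-cosets. Each coset $C$ is $b$-invariant, so $(1-b)$ respects this decomposition, and on each $A^C$ (identifying $C$ with $\langle b\rangle$) it acts as a telescoping difference. A short direct computation, handling the finite-order and infinite-order cases separately, shows that its image is precisely the sum-zero subspace
\[
\Bigl\{ h \in A^C : \sum_{x \in C} h(x) = 0 \Bigr\}.
\]
Minimality of $f_1$ and $f_2$ with respect to $b$ means each $f_i|_C$ is either zero or supported at a single point, and since $c\cdot f_1$ is again minimal, $(f_2 - c \cdot f_1)|_C$ has at most two nonzero entries. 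The sum-zero condition then forces, coset by coset, the values of $f_2|_C$ and $(c\cdot f_1)|_C$ to agree whenever both are nonzero, and the sets of cosets supporting $f_2$ and $c \cdot f_1$ to coincide.

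The hardest step will be turning this coset-by-coset matching into a single global conjugator $c' \in B$ with $c'\cdot f_1 = f_2$. The issue is that on each coset the correction $g - b\cdot g$ can freely relocate the single supported point, so reducedness by itself constrains only the coset-level values and the set of cosets hit by each $f_i$. Pinning down the actual support points in $B$ -- rather than merely in $B/\langle b\rangle$ -- requires tracking these positions simultaneously across all cosets and using minimality to force the within-coset shifts to be compatible with the outer shift in $B/\langle b\rangle$, yielding a unique $c' \in B$. I expect this lifting step to mirror the analogous argument in \cite[Lemma 5.14]{ferov_pengitore}, and this is where the main technical work lies.
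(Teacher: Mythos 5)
Your reduction is correct as far as it goes: projecting to $B$ gives $b_1=b_2$, the conjugation formula $(g,c)(f_1,b)(g,c)^{-1}=(g+c\cdot f_1-b\cdot g,\,b)$ is right, $\Im(1-b)$ decomposes along $\langle b\rangle$-cosets and equals the sum-zero functions on each coset (in both the finite- and infinite-order cases), and minimality then forces the values and the set of cosets hit by $f_2$ and $c\cdot f_1$ to coincide. The genuine gap is exactly the step you defer to the end: the coset-level matching cannot be lifted to a single $c'\in B$ with $c'\cdot f_1=f_2$, because the correction $g-b\cdot g$ really does relocate each support point arbitrarily within its $\langle b\rangle$-coset, independently coset by coset, and nothing in reducedness synchronizes these relocations. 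Concretely, in $\mathbb{Z}/2\mathbb{Z}\wr\mathbb{Z}$ take $b=2$, $f_1=\delta_0+\delta_1$ and $f_2=\delta_0+\delta_3$, where $\delta_i$ is the function with value $1$ at $i$ and $0$ elsewhere. Both $f_1b$ and $f_2b$ are reduced (supports meet each coset of $2\mathbb{Z}$ at most once), and conjugating $(f_1,2)$ by $(\delta_3,0)$ gives $(f_1+\delta_3+2\cdot\delta_3,\,2)=(\delta_0+\delta_3,\,2)=(f_2,2)$ since $2\cdot\delta_3=\delta_1$; yet no translate of $\supp(f_1)=\{0,1\}$ equals $\supp(f_2)=\{0,3\}$, so $f_1b\notin(f_2b)^B$. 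Thus the step you hoped would "mirror the analogous argument" is not merely the hardest step, it is impossible: the conclusion as stated (a single $c$ with $c\supp(f_1)=\supp(f_2)$ and $f_1(cx)=f_2(x)$) is strictly stronger than what conjugacy of reduced elements yields. What your analysis actually proves, and what is true, is the criterion at the level of $B/\langle b\rangle$: for reduced elements, $f_1b_1\sim_G f_2b_2$ if and only if $b_1=b_2$ and the induced functions on $B/\langle b\rangle$ (coset sums, which by reducedness are the single nonzero values) are translates of one another by some element of $B/\langle b\rangle$.

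For the comparison you were asked to make: the paper supplies no proof of this lemma at all, it simply asserts that the statement follows from \cite[Lemma 5.14]{ferov_pengitore}, so there is no in-paper argument for your lifting step to be measured against; the statement as transcribed here appears to strengthen the cited coset-level criterion beyond what any proof along your (correct) lines can deliver. The honest completion of your write-up is to stop at the $B/\langle b\rangle$-level criterion rather than attempt the lifting, and to flag that the "in particular" clause requires the supports and values to match only modulo $\langle b\rangle$.
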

One interpretation of Lemma \ref{lemma:abelian_wreath_conjugacy_criterion} is that by ensuring that we are only working with reduced elements of $A \wr B$, we only need to worry about them being conjugate by an element from $B$.

Let $A$ be a finite abelian group and let $B$ be a finitely generated abelian group of torsion free rank at least $1$. This next lemma allows to reduce the study of asymptotic lower bounds for conjugacy separability of groups of the form $A \wr B$ to that of groups of the form $\mathbb{F}_p \wr \mathbb{Z}.$
\begin{lemma}
    \label{lemma:lamplighter conjugacy embedded}
    Let $A$ be a finite abelian group where $p \mid |A|$, and let $B$ be an infinite, finitely generated abelian group. The group $\mathbb{F}_p \wr \mathbb{Z}$ is conjugacy embedded in the group $A \wr B$ and 
    $$
    \Conj_{\mathbb{F}_p \wr \mathbb{Z}}(n) \preceq \Conj_{A \wr B}(n).
    $$
\end{lemma}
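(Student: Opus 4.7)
The plan is to exhibit $\mathbb{F}_p \wr \mathbb{Z}$ as a conjugacy embedded subgroup of $A \wr B$ via the natural componentwise embedding, verify the conjugacy-embedded property directly from the conjugacy criterion of Lemma \ref{lemma:abelian_wreath_conjugacy_criterion}, and then transfer the depth function inequality through Lemma \ref{conj_embedded_conj_depth}. Since Lemma \ref{lemma:conjugacy embedded subgroups} requires the component subgroups to be retracts, it cannot be invoked directly; for example, $\mathbb{F}_2$ is not a retract of $\mathbb{Z}/4\mathbb{Z}$, so a more hands-on argument is needed.

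First, since $p$ divides $|A|$ and $A$ is a finite abelian group, Cauchy's theorem yields an injection $\iota_A \colon \mathbb{F}_p \hookrightarrow A$. Since $B$ is infinite and finitely generated abelian, fix a splitting $B = \mathbb{Z}^k \oplus \Tor(B)$ with $k \geq 1$ and let $\iota_B \colon \mathbb{Z} \hookrightarrow B$ be the embedding onto the first coordinate. These induce an injective homomorphism $\Phi \colon \mathbb{F}_p \wr \mathbb{Z} \hookrightarrow A \wr B$, and I identify $\mathbb{F}_p \wr \mathbb{Z}$ with its image.

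The main step is checking that this copy is conjugacy embedded. Suppose $r_1, r_2 \in \mathbb{F}_p \wr \mathbb{Z}$ satisfy $r_1 \sim_{A \wr B} r_2$; by Lemma \ref{lemma:reduced_elements}, I may replace them inside $\mathbb{F}_p \wr \mathbb{Z}$ by reduced representatives $r_i = f_i b_i$. Since $\supp(f_i) \subseteq \iota_B(\mathbb{Z})$ and $b_i \in \iota_B(\mathbb{Z})$, distinct $\langle b_i \rangle$-cosets in $\iota_B(\mathbb{Z})$ remain distinct in $B$, so $r_1$ and $r_2$ remain reduced when viewed in $A \wr B$. Lemma \ref{lemma:abelian_wreath_conjugacy_criterion} applied in $A \wr B$ then yields $b_1 = b_2$ together with some $c \in B$ with $c + \supp(f_1) = \supp(f_2)$ and $f_1(c + x) = f_2(x)$ for all $x$. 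If $\supp(f_1)$ is nonempty, picking any $y \in \supp(f_1)$ gives $c = (c + y) - y \in \iota_B(\mathbb{Z})$ since both $y$ and $c + y$ lie there. Treating $c$ as an element of $\mathbb{Z}$, the same conditions then yield $r_1 \sim r_2$ in $\mathbb{F}_p \wr \mathbb{Z}$ by the criterion of Lemma \ref{lemma:abelian_wreath_conjugacy_criterion} applied there. If instead $\supp(f_1) = \emptyset$, reducedness forces $\supp(f_2) = \emptyset$ and the two elements are trivially conjugate.

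Finally, to deduce the depth bound, fix finite generating subsets $X_R$ of $\mathbb{F}_p \wr \mathbb{Z}$ and $X$ of $A \wr B$. Since each element of $X_R$ can be written as a word of uniformly bounded length in $X$, there is a constant $C$ with $\Ball_{\mathbb{F}_p \wr \mathbb{Z}}(n) \subseteq \Ball_{A \wr B}(Cn)$. Combining this inclusion with conjugacy-embeddedness and Lemma \ref{conj_embedded_conj_depth} gives $\Conj_{\mathbb{F}_p \wr \mathbb{Z}}(n) \leq \Conj_{A \wr B}(Cn)$, which is exactly $\Conj_{\mathbb{F}_p \wr \mathbb{Z}}(n) \preceq \Conj_{A \wr B}(n)$. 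The only real obstacle is the middle step—forcing the $B$-conjugator $c$ to lie in $\iota_B(\mathbb{Z})$—which relies crucially on the support restriction guaranteed by passing to reduced representatives.
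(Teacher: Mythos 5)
Your proof is correct, and it rests on the same two pillars as the paper's argument: the conjugacy criterion for reduced elements (Lemma \ref{lemma:abelian_wreath_conjugacy_criterion}) and the transfer of conjugacy depth through a conjugacy embedded subgroup (Lemma \ref{conj_embedded_conj_depth}). The organization differs, though. The paper first invokes a retract reduction: since $\mathbb{Z}$ is a retract of $B$, the subgroup $A \wr \mathbb{Z}$ is a retract of $A \wr B$, so by Lemma \ref{lemma:retracts lower bound} one may assume $B \cong \mathbb{Z}$; after that, conjugacy embeddedness of $\mathbb{F}_p \wr \mathbb{Z}$ in $A \wr \mathbb{Z}$ follows from the criterion with no extra work, because the conjugator produced by Lemma \ref{lemma:abelian_wreath_conjugacy_criterion} automatically lies in the acting copy of $\mathbb{Z}$. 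You instead work in $A \wr B$ in one stroke, and the price is exactly the step you flag: pinning the conjugator $c$ into $\iota_B(\mathbb{Z})$, which your computation $c = (c+y) - y$ with $y, c+y \in \iota_B(\mathbb{Z})$ handles correctly (in the empty-support case the conclusion $\supp(f_2) = \emptyset$ comes from the translation condition $c\,\supp(f_1) = \supp(f_2)$ rather than from reducedness, but the outcome is the same). Your treatment of word lengths is also leaner: for this lower bound only the trivial inclusion $\Ball_{\mathbb{F}_p \wr \mathbb{Z}}(n) \subseteq \Ball_{A \wr B}(Cn)$ is needed, whereas the paper devotes a paragraph to an undistortedness computation that is not actually required for the inequality to go the right way. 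Finally, you and the paper both correctly avoid Lemma \ref{lemma:conjugacy embedded subgroups}, since $\mathbb{F}_p$ need not be a retract of $A$; your explicit counterexample $\mathbb{F}_2 \leq \mathbb{Z}/4\mathbb{Z}$ makes this point cleanly.
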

\begin{proof}
    Since $\mathbb{Z}$ is a retract of $B$, we have that $A \wr \mathbb{Z}$ is a retract of $A \wr B.$ We then have by Lemma \ref{lemma:retracts lower bound} that $\Conj_{A \wr \mathbb{Z}}(n) \preceq \Conj_{A \wr B}(n)$. Thus, we may assume that $B \cong \mathbb{Z}.$ 
    
    We now demonstrate that $\mathbb{F}_p \wr \mathbb{Z}$ is conjugacy embedded into $A \wr \mathbb{Z}.$ Suppose that $\mathbb{Z} =\left<b\right>.$ Let $f_1, f_2 \colon \mathbb{Z} \to A$ be given such that $f_1 b^{s_1}, f_2b^{s_2} \in \mathbb{F}_p \wr \mathbb{Z}$ satisfy $f_1 b^{s_1} \sim_{A \wr \mathbb{Z}} a^{t_2}b^{s_2}$ where $s_1, s_2 \in \mathbb{Z}.$ Moreover, we may assume are both reduced. We claim that $f_1 b^{s_1} \sim_{\mathbb{F}_p \wr \mathbb{Z}} f_2b^{s_2}$. Since $f_1 b^{s_1} \sim_{A \wr \mathbb{Z}} f_2b^{s_2}$, we must have $b^{s_1} \sim_{\mathbb{Z}} b^{s_2}$, and given that $\mathbb{Z}$ is abelian, we then have $s = s_1 = s_2.$ By Lemma \ref{lemma:abelian_wreath_conjugacy_criterion}, we have there exists a $b^t$ such that $b^t \cdot \supp(f_1) = \supp(f_2)$ and $f_1(b^tx) = f_2(x)$. However,  that is equivalent to $f_1 b^{s}$ and $f_2b^{s}$ being conjugate in $\mathbb{F}_p \wr \mathbb{Z}$ as desired.

    For the second part of the statement, we first show that $\mathbb{F}_p \wr \mathbb{Z}$ can be realised as an undistorted subgroup of $A \wr B$. If $\mathbb{Z}/p^e\mathbb{Z} = \langle a \rangle$, we then see that $\mathbb{Z}/p\mathbb{Z} = \langle a^{p^{e-1}} \rangle$. Letting $X_{p_e} =  \{ a, a^{p^{e-1}} b \} \subseteq L_{p^e}$ and $X_p = \{a^{p^{e-1}}, b\} \subseteq L_{p}$, it then follows that $L_{p^e} = \langle X_{p_e} \rangle$ and $L_p = \langle X_p \rangle$. One can easily check that for any $x \in L_p$ that $\|x\|_{X_p} = \|x\|_{X_{p^e}}$, and subsequently, $\Ball_{L_p, X_p} (n) \subseteq \Ball_{L_{p^e}, X_{p^e}}(n)$. 
    
     Now suppose that $x,y \in \Ball_{\mathbb{F}_p \wr \mathbb{Z}}(n)$ are not conjugate. We then have that $f, g \in \Ball_{A \wr \mathbb{Z}}(n)$, and since $\mathbb{F}_p \wr \mathbb{Z}$ is conjugacy embedded into $A \wr B$, Lemma \ref{conj_embedded_conj_depth} implies
     $$
\CD_{\mathbb{F}_p \wr \mathbb{Z}}(x,y) \leq \CD_{A \wr B}(x,y).
     $$
    As a consequence of the above inequality, and the previous paragraph, we see that 
    $$
    \Conj_{\mathbb{F}_p \wr \mathbb{Z}}(n) \preceq \Conj_{A \wr \mathbb{Z}}(n).\qedhere
    $$
    \end{proof}

The next lemma, which is a direct consequence of \cite[Theorem 3.4]{davis_olshanskii}, relates the size of the support of its function part and the size of the elements in the range of the function with the word length of an element. We omit the proof in order to avoid having to introduce more technical notation, we encourage a curious reader to inspect \cite[Theorem 3.4]{davis_olshanskii} and prove check that the statement indeed holds.
\begin{lemma}
    \label{lemma:bounding_support_and_image}
    Let $A, B$ be finitely generated groups and let $G = A \wr B$ be their wreath product. Then there exists a constant $C > 0$ such that if $g = f b$ where $f \in A^B$ and $b \in B$, then 
    \begin{itemize}
        \item[(i)] $\supp(f) \subseteq \Ball_B(C \|g\|)$,
        \item[(ii)] $\rng(f) \subseteq \Ball_A(C \|g\|)$,
        \item[(iii)] $b \in \Ball_B(C\| g\|)$.
    \end{itemize}
\end{lemma}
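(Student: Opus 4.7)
The plan is to interpret a geodesic word representing $g = fb$ in $A \wr B$ as a ``lamplighter walk'' and track how the triple $(b_i, \supp(f_i), \rng(f_i))$ evolves one letter at a time.

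First, I would fix finite generating sets $S_A \subseteq A$ and $S_B \subseteq B$. These induce a natural finite generating set $S$ for $A \wr B$ consisting of the images of $S_A$ under the standard embedding $A \hookrightarrow A \wr B$ (each $a \in S_A$ becoming the element whose function part is supported at $1_B$ with value $a$) together with the images of $S_B$ under the standard embedding $B \hookrightarrow A \wr B$. Since word lengths under different finite generating sets of a group are related by a multiplicative and additive constant, it suffices to prove the three inclusions with respect to $\|\cdot\|_S$; the resulting distortion when passing to any other finite generating set of $A \wr B$, or back to the originally fixed generating sets of $A$ and $B$, can then be absorbed into the single constant $C$ of the statement.

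Next, I would write $g = w_1 w_2 \cdots w_n$ as a geodesic word in $S^{\pm 1}$ with $n = \|g\|_S$, and for $0 \le i \le n$ set $g_i = w_1 \cdots w_i = (f_i, b_i)$ with $g_0 = (1_{A^B}, 1_B)$. A direct computation with the wreath-product multiplication rule yields the following dichotomy. If $w_{i+1}$ is the image of a generator of $B$, then $b_{i+1}$ differs from $b_i$ by right multiplication with a single generator of $B$, while $f_{i+1} = f_i$. If instead $w_{i+1}$ is the image of a generator of $A$, then $b_{i+1} = b_i$ and $f_{i+1}$ agrees with $f_i$ everywhere except at a single position determined only by $b_i$, where the value is modified by multiplication with an element of $S_A^{\pm 1}$; in particular the (new or modified) support point lies within word-distance $i$ of $1_B$ in $B$.

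From this bookkeeping each assertion is immediate. Statement (iii) holds because $b = b_n$ is a product of at most $n$ generators of $B$, hence lies in $\Ball_B(n)$. For (i), every point of $\supp(f)$ must have first appeared during some $A$-generator step $i+1 \le n$, at which moment a point within $\Ball_B(i) \subseteq \Ball_B(n)$ was adjoined to the support; hence $\supp(f) \subseteq \Ball_B(n)$. For (ii), at each position $x \in B$ the value $f(x)$ is a product of at most $n$ elements of $S_A^{\pm 1}$, one contribution per $A$-letter of the geodesic word that lands at $x$, so $f(x) \in \Ball_A(n)$. No serious obstacle arises; the argument is essentially bookkeeping along a geodesic, which is precisely the content of the Davis--Olshanskii word length formula for wreath products cited by the authors.
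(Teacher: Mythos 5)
Your argument is correct. Note, however, that the paper does not actually prove this lemma: it states that the lemma is a direct consequence of the word-length formula of Davis and Olshanskii and omits the verification, so your prefix-tracking argument is a genuinely self-contained alternative rather than a reproduction of the paper's route. Writing a geodesic $g = w_1 \cdots w_n$ in the standard generating set built from $S_A$ and $S_B$ and recording the prefixes $(f_i, b_i)$, each $B$-letter moves $b_i$ by one generator and leaves $f_i$ untouched, while each $A$-letter fixes $b_i$ and alters $f_i$ at the single position $b_i^{\pm 1}$ (the sign depending on the convention chosen for the action of $B$ on $A^B$), a position whose word length is at most the number of $B$-letters read so far; this gives (i) and (iii) at once, and (ii) follows because each value $f(x)$ is the product of the at most $n$ letters from $S_A^{\pm 1}$ applied while the cursor sits over $x$. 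Passing between this standard generating set and arbitrary finite generating sets of $A$, $B$ and $A \wr B$ costs only a multiplicative constant, which is absorbed into $C$, exactly as you say. In effect you use only the easy direction of the Davis--Olshanskii estimate (that the support, the lamp values, and the cursor position are bounded above in terms of $\|g\|$), not the full two-sided word-length formula, so your proof is more elementary than the citation while establishing precisely what the lemma asserts.
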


Given a wreath product $A \wr B$ with a surjective homomorphism $\pi \colon B \to \overline{B}$, we denote $\tilde{\pi} \colon A \wr B \to A \wr \overline{B}$ as the canonical extension of $\pi$ to all of $A \wr B$ given by
\begin{displaymath}
    \tilde{\pi}(f)(b K) = \sum_{k \in K}f(b + k), 
\end{displaymath}
where $K = \ker(\pi)$. Note that since the group $A$ is abelian and the function $f \in A^B$ is finitely supported, the above sum is well defined. Similarly, if $\pi \colon A \to \overline{A}$ is a surjective homomorphism, we let $\tilde{\pi} \colon A \wr B \to \overline{A} \wr B$ as the natural extension of $\pi$ to all of $A \wr B$.

\subsection{Wreath products and Laurent polynomial rings} $\:$ \newline
\label{subsec:laurent}
Much of the following discussion, which includes undefined notation and terms, can be found in \cite{atiyah_macdonald, eisenbud, lang}. Given a commutative ring $R$, we will write $R[x]$ to denote the ring of polynomials in the variable $x$ with coefficients in $R$, and we will use $R[x, x^{-1}]$ to denote the ring of Laurent polynomials over $R$.

We first note that $R[x, x^{-1}]$ is the localisation of the ring $R[x]$ on the set $S =\{x^m \: | \: m \in \mathbb{N} \}$. We then have that the ideals of $R[x, x^{-1}]$ are in one-to-one correspondence with ideals of $R[x]$ that don't intersect the set $S$. In particular, for any ideal $\mathcal{I} \subset R[x]$ where $\mathcal{I} \cap S = \emptyset$, we have that $R[x, x^{-1}] / (S^{-1} \mathcal{I}) = S^{-1}(R[x] / \mathcal{I})$. We finish by observing that the maximal ideals of $R[x, x^{-1}]$ can be written as $\mathcal{I} = (f)$ where $f$ is an irreducible polynomial not in $S$. If $k = \deg(f)$, then $|R[x, x^{-1}] / \mathcal{I}| = |R|^k$.

We now focus on the following representation of $R \wr \mathbb{Z}$ as a semidirect product of the ring $R[x, x^{-1}]$ and $\mathbb{Z}$. First, let us define a function $P \colon R^{\mathbb{Z}} \to R[x, x^{-1}]$ given by
\begin{displaymath}
    P(f) = \sum_{m \in \mathbb{Z}} f(m)x^m.
\end{displaymath}
One can easily check in the context of finitely supported functions that $P$ is a bijection and for any $r \in R$, $f,g \in R^{\mathbb{Z}}$, and $m \in \mathbb{Z}$ that the following holds:
\begin{enumerate}
    \item[(i)] $P(rf) = rP(f)$,
    \item[(ii)] $P(f + g) = P(f) + P(g)$,
    \item[(iii)] $P(m\cdot f) = x^m P(f)$.
\end{enumerate}
We will use these three equalities without mention.

\begin{lemma}
    Let $R$ be either the ring $\mathbb{Z}$ or $\mathbb{F}_p$ where $p$ is prime. The group $R \wr \mathbb{Z}$ is isomorphic to $R[x,x^{-1}] \rtimes \mathbb{Z}$ where $R[x,x^{-1}]$ is ring of Laurent polynomials with addition and for $t \in \mathbb{Z}$, we have $t \cdot f(x) = x^tf(x).$
\end{lemma}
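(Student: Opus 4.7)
The plan is to promote the bijection $P \colon R^{\mathbb{Z}} \to R[x,x^{-1}]$ already introduced just before the lemma statement to an isomorphism of semidirect products. Specifically, I would define $\tilde{P} \colon R \wr \mathbb{Z} \to R[x,x^{-1}] \rtimes \mathbb{Z}$ by
\[
\tilde{P}(f,t) = (P(f),\, t)
\]
for $f \in R^{\mathbb{Z}}$ and $t \in \mathbb{Z}$. Since $P$ is a bijection on the first coordinate and the identity is used on the second, $\tilde{P}$ is automatically a bijection, so the only content of the lemma is that $\tilde{P}$ respects the group operations on the two sides.

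To check that $\tilde{P}$ is a homomorphism, I would expand both sides of
\[
\tilde{P}\bigl((f_1,t_1)(f_2,t_2)\bigr) = \tilde{P}(f_1,t_1)\,\tilde{P}(f_2,t_2).
\]
On the left, the wreath product multiplication gives $(f_1 + t_1 \cdot f_2,\, t_1 + t_2)$, and applying $\tilde{P}$ and then properties (ii) and (iii) of $P$ (additivity and $P(m \cdot f) = x^m P(f)$) produces $(P(f_1) + x^{t_1} P(f_2),\, t_1 + t_2)$. On the right, the semidirect product rule in $R[x,x^{-1}] \rtimes \mathbb{Z}$ under the prescribed action $t \cdot h(x) = x^t h(x)$ yields the same pair. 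Hence $\tilde{P}$ is a homomorphism, and therefore an isomorphism.

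I do not expect any genuine obstacle; the lemma is essentially a bookkeeping statement that repackages $R \wr \mathbb{Z}$ as a ring-theoretic object in preparation for the commutative-algebra arguments used later in the paper. The only point requiring a moment of care is to confirm that the convention for the shift action on $R^{\mathbb{Z}}$ matches multiplication by $x^t$ on $R[x,x^{-1}]$, which is exactly what property (iii) of $P$ records. After that verification, the isomorphism drops out immediately from the definition of $\tilde{P}$.
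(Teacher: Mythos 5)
Your proposal is correct and follows exactly the paper's approach: the paper defines the same map $\varphi(fm) = (P(f),m)$ and simply asserts it is easy to see that it is an isomorphism, while you have filled in the routine verification using properties (ii) and (iii) of $P$.
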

\begin{proof}
Let $\varphi \colon R \wr \mathbb{Z} \to R[x,x^{-1}] \rtimes \mathbb{Z}$ be the map given by $\varphi\left(f m\right) = (P(f),m)$. It is then easy to see that this map is an isomorphism. \end{proof}

The following lemma allows us to understand finite quotients of $R \wr \mathbb{Z}$ in terms of the cofinite ideals of $R[x, x^{-1}]$. For the following lemma, we identify $R[x, x^{-1}]$ with the normal subgroup of $R \wr \mathbb{Z}$ given by elements of the form $(P,0)$ where $P \in R[x,x^{-1}].$

\begin{lemma}\label{ideal_normal}
Let $R$ be either the ring $\mathbb{Z}$ or $\mathbb{F}_p$ where $\mathbb{F}_p$ be the field with $p$ elements. Let $N \trianglelefteq R \wr \mathbb{Z}$. Then $N \cap R[x, x^{-1}]$ is an ideal in $R[x, x^{-1}]$. In particular, if $N \trianglelefteq_{f.i} R \wr \mathbb{Z}$, then $N \cap R[x, x^{-1}]$ is a cofinite ideal of $R[x, x^{-1}]$.
\end{lemma}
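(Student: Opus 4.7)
The plan is to verify the two ring-ideal axioms, namely that $N \cap R[x,x^{-1}]$ is an additive subgroup of $R[x,x^{-1}]$ and that it is closed under multiplication by arbitrary elements of $R[x,x^{-1}]$, and then deduce cofiniteness from a standard first-isomorphism-theorem argument. Additive closure is automatic since $N \cap R[x,x^{-1}]$ is the intersection of two subgroups of the underlying abelian group $(R \wr \mathbb{Z}, \cdot)$, whose restriction to the copy of $R[x,x^{-1}]$ is exactly polynomial addition. So the core of the argument is closure under ring multiplication.

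Next I would exploit the two features of $R[x,x^{-1}]$: it is additively generated by the monomials $r x^t$ with $r \in R$ and $t \in \mathbb{Z}$, and these monomials are realised inside $R \wr \mathbb{Z}$ in a way that ties multiplication by $x^t$ to conjugation by $(0,t)$. More precisely, for any $(f,0) \in N \cap R[x,x^{-1}]$ and any $t \in \mathbb{Z}$, the semidirect-product multiplication rule gives
\begin{equation*}
(0,t)\,(f,0)\,(0,t)^{-1} = (t\cdot f, 0) = (x^t f, 0),
\end{equation*}
which lies in $N$ by normality and in $R[x,x^{-1}]$ by construction; hence $x^t f \in N \cap R[x,x^{-1}]$. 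This supplies closure under multiplication by $x^t$ for every $t \in \mathbb{Z}$. Closure under multiplication by a scalar $r \in R$ is then purely an additive matter: when $R = \mathbb{F}_p$ every such $r$ is a non-negative integer multiple in the additive group, and when $R = \mathbb{Z}$ scalar multiplication by $r$ is literally addition repeated $|r|$ times (with negation if $r < 0$), so additive closure suffices in both cases. Combining these with distributivity yields closure under multiplication by an arbitrary element $\sum_t r_t x^t \in R[x,x^{-1}]$, so $N \cap R[x,x^{-1}]$ is an ideal.

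For the ``in particular'' clause, assume $N \normleq_{\text{f.i.}} R \wr \mathbb{Z}$ and consider the composition
\begin{equation*}
\iota \colon R[x,x^{-1}] \hookrightarrow R \wr \mathbb{Z} \twoheadrightarrow (R \wr \mathbb{Z})/N,
\end{equation*}
where the first arrow is the identification $f \mapsto (f,0)$. The kernel of $\iota$ is visibly $N \cap R[x,x^{-1}]$, so the additive group $R[x,x^{-1}]/(N \cap R[x,x^{-1}])$ embeds into the finite group $(R \wr \mathbb{Z})/N$; the ideal is therefore of finite index, i.e. cofinite.

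I do not expect any genuine obstacle here: the only thing one must be careful about is making sure that all three ingredients (multiplication by $x^t$, multiplication by scalars in $R$, and additive closure) are handled, and in particular that the scalar-multiplication step does not silently require a non-trivial hypothesis on $R$. Restricting to $R \in \{\mathbb{Z}, \mathbb{F}_p\}$ as in the statement makes this step immediate because both rings are additively generated by $1$.
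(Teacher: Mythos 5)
Your proposal is correct and follows essentially the same route as the paper: conjugation by $(0,t)$ yields closure under multiplication by $x^t$, the group operation gives additive closure, scalar multiplication by $r \in R$ reduces to repeated addition, and distributivity handles general Laurent polynomials. Your explicit first-isomorphism-theorem argument for cofiniteness just spells out what the paper leaves as ``immediately follows.''
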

\begin{proof}
Let $M = N \cap R[x, x^{-1}].$  We note for $(P,0) \in M$ that
$$
\left(0, m  \right)\left(P,0\right) \left( 0, -m\right)=  \left(x^m P,0 \right) \in M
$$
since $M$ is normal. In particular, we have that $M$ is closed under multiplication by $x^m$ in $R[x, x^{-1}]$ for all $m \in \mathbb{Z}$. Additionally,  for $(P_1,0), (P_2,0) \in M$ we have that
$$
(P_1,0) (P_2,0) = (P_1 + P_2,0).
$$
That implies $M$ is closed under addition. Since multiplying $P$ by $r$ is the same as adding $r$ copies of $P$ and given that $M$ is a subgroup of $R[x,x^{-1}]$ with addition as its group operation, we must have that $(rP,0) \in M$. Thus, for $(P,0) \in M$ and a general element $\sum_{m \in \mathbb{Z}} a_m x^m$ of $R[x, x^{-1}]$, we may write
$$
P \cdot \sum_{m \in \mathbb{Z}} a_m x^m = \sum_{m \in \mathbb{Z}} a_m x^m P \in M.
$$
Thus, $M$ is an ideal in $R[x, x^{-1}]$. Moreover, the second part of the statement immediately follows.
\end{proof}

The following lemma gives the explicit expression for the conjugacy class of an arbitrary element of $R \wr \mathbb{Z}$. 
\begin{lemma}\label{conjugacy_class}
    Let $R$ be either the ring $\mathbb{Z}$ or $\mathbb{F}_p$ where $\mathbb{F}_p$ is the field with $p$ elements. For $(P, m) \in R \wr \mathbb{Z}$, its conjugacy class is given by
    $$
    \left\{\left( \left( x^{\ell} P +  \left(x^m - 1 \right)Q, m\right) \: \right| \ell \in \mathbb{Z}, Q \in R[x, x^{-1}]\right\}.
    $$
\end{lemma}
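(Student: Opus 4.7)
The plan is to prove this by direct computation in the semidirect product model $R[x,x^{-1}] \rtimes \mathbb{Z}$ established in the preceding lemma. The multiplication and inverse in this semidirect product are given by
\begin{align*}
    (P_1,m_1)(P_2,m_2) &= (P_1 + x^{m_1} P_2, m_1 + m_2), \\
    (Q,\ell)^{-1} &= (-x^{-\ell}Q, -\ell),
\end{align*}
since $\mathbb{Z}$ acts on $R[x,x^{-1}]$ by multiplication by $x^\ell$.

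First I would compute a general conjugate of $(P,m)$ by an arbitrary element $(Q,\ell) \in R[x,x^{-1}] \rtimes \mathbb{Z}$. Applying the formulas above:
\begin{align*}
    (Q,\ell)(P,m)(Q,\ell)^{-1}
    &= (Q + x^\ell P, \ell + m)(-x^{-\ell}Q, -\ell) \\
    &= \bigl(Q + x^\ell P - x^{\ell+m} x^{-\ell} Q,\, m\bigr) \\
    &= \bigl(x^\ell P + (1 - x^m)Q,\, m\bigr).
\end{align*}
Since $Q$ ranges over all of $R[x,x^{-1}]$, we can replace $Q$ with $-Q$ to rewrite this as $(x^\ell P + (x^m-1)Q,\, m)$, showing that every conjugate of $(P,m)$ has the claimed form.

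Next I would note that the second coordinate of any conjugate of $(P,m)$ is forced to be $m$, because the quotient $R[x,x^{-1}] \rtimes \mathbb{Z} \twoheadrightarrow \mathbb{Z}$ is abelian and conjugation must preserve the image in the quotient. Conversely, every pair $(\ell, Q) \in \mathbb{Z} \times R[x,x^{-1}]$ arises from conjugation by $(Q, \ell)$, so every element of the displayed set is indeed a conjugate. This establishes the set equality.

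There is no real obstacle here — the result is a one-step calculation once one has the semidirect product description. The only mildly delicate point is the sign convention: the naive computation yields $(1-x^m)Q$, and one must observe that since $Q$ is a free parameter, this gives the same set as $(x^m-1)Q$.
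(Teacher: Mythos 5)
Your computation is correct and coincides with the paper's own proof: both conjugate $(P,m)$ by an arbitrary $(Q,\ell)$, obtain $(x^\ell P + (1-x^m)Q, m)$, and use that $Q$ is a free parameter to replace it by $-Q$, giving the stated set. No differences worth noting.
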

\begin{proof}
Let $Q \in R[x, x^{-1}]$ and $\ell \in \mathbb{Z}$ be arbitrary. We then write 
\begin{align*}
    (Q,\ell)(P, m)(Q,\ell)^{-1} &= \left(Q + x^\ell P , \ell + m \right)\left(-x^{-\ell}Q, -\ell \right)\\
                                &= \left(Q + x^\ell P - x^{\ell + m}x^{-\ell}Q, \ell + m -\ell\right)\\
                                &= \left(Q + x^\ell P - x^mQ,  m \right)\\
                                &= \left(x^\ell P + (1-x^m) Q, m \right).
\end{align*}
Since $Q$ was arbitrary, we may replace it by $-Q$ allowing us to write
\begin{displaymath}
    (Q,\ell)(P, m)(Q,\ell)^{-1} = \left(x^\ell P + (x^{m} - 1) Q, m \right).
\end{displaymath}
From here, our statement is clear.
\end{proof}

%Let $S$ be a finite generating subset for $R \wr \mathbb{Z}$, and let $P \in R[x,x^{-1}]$ be a Laurent polynomial. We finish this section with the following lemma which bounds the coefficients and the degree of $P$ in terms of the word length of $P$ with respect to $S$ when viewed as an element of $R \wr \mathbb{Z}$. Its proof is a straightforward consequence of Lemma \ref{lemma:bounding_support_and_image}.
%\begin{lemma}
 %   Let $R$ be either the ring $\mathbb{Z}$ or $\mathbb{F}_p$ where $\mathbb{F}_p$ is the field with $p$ elements. Let $S$ be a finite generating subset for $R \wr \mathbb{Z}$. There exists a constant $C>0$ satisfying the following. Let $x = (P,v)$ where $P \in R[x, x^{-1}]$ and $v \in \mathbb{Z}$. If $a_\ell x^\ell$ is a monomial in $P$, we have the following two cases.
  %  \begin{enumerate}
   % \item If $R = \mathbb{Z}$, then $|\ell| \leq C\|x\|_S$ and $|a_\ell| \leq C\|x\|_S$.
    %\item If $R = \mathbb{F}_p$, then $|\ell| \leq C\|x\|_S$ and $|a_\ell| \leq p.$
    %\end{enumerate}
%\end{lemma}

\section{Lower bounds}
\label{sec:lower bounds}
In this section, we construct asymptotic lower bounds for conjugacy separability for the groups $\mathbb{F}_p \wr \mathbb{Z}$ and $\mathbb{Z} \wr \mathbb{Z}.$ which we divide into two subsections. The first subsection goes over the lower bounds for $\Conj_{\mathbb{F}_p \wr \mathbb{Z}}(n)$. The second subsection constructs lower bounds for $\Conj_{\mathbb{Z} \wr \mathbb{Z}}(n)$. 

\subsection{Lower bounds for $\Conj_{A \wr B}(n)$ where $A$ is a finite abelian group} $\:$ \newline
 In this section, we provide asymptotic lower bounds for $\Conj_{A \wr B}(n)$ when $A$ is a finite abelian group and $B$ is an infinite, finitely generated abelian group by finding asymptotic bounds for $\Conj_{\mathbb{F}_p \wr \mathbb{Z}}(n)$ .
\begin{proposition}\label{rank_one_lamplighter}
    Let $A$ be a finite abelian group and $B$ be an infinite, finitely generated abelian group. Then $2^n \preceq \Conj_{A \wr B}(n).$
\end{proposition}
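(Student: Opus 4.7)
The plan is to reduce to proving $2^n \preceq \Conj_{\mathbb{F}_p \wr \mathbb{Z}}(n)$ for any prime $p$ dividing $|A|$ (such a $p$ exists since $A$ must be assumed nontrivial for the bound to be meaningful), using Lemma~\ref{lemma:lamplighter conjugacy embedded}, which gives $\Conj_{\mathbb{F}_p \wr \mathbb{Z}}(n) \preceq \Conj_{A \wr B}(n)$. Under the identification $\mathbb{F}_p \wr \mathbb{Z} \cong \mathbb{F}_p[x, x^{-1}] \rtimes \mathbb{Z}$ from Subsection~\ref{subsec:laurent}, I focus on pairs of elements of the form $(P_1, 0), (P_2, 0)$. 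Applying Lemma~\ref{conjugacy_class} with $m = 0$ shows that two such elements are conjugate in $\mathbb{F}_p \wr \mathbb{Z}$ if and only if $P_2 = x^\ell P_1$ for some $\ell \in \mathbb{Z}$, and the parallel computation in a finite quotient $G/N$, combined with Lemma~\ref{ideal_normal}, shows that they remain non-conjugate in $G/N$ if and only if $P_2 - x^\ell P_1 \notin I$ for every $\ell \in \mathbb{Z}$, where $I = N \cap \mathbb{F}_p[x, x^{-1}]$ is a cofinite ideal. Moreover $|G/N| \geq |\mathbb{F}_p[x, x^{-1}]/I|$, so a lower bound on $\dim_{\mathbb{F}_p}(\mathbb{F}_p[x, x^{-1}]/I)$ translates directly into a lower bound on the conjugacy depth.

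The heart of the argument is then to construct, for each $n$, a pair of Laurent polynomials $P_1^{(n)}, P_2^{(n)} \in \mathbb{F}_p[x, x^{-1}]$ of degree $O(n)$ (so that by Lemma~\ref{lemma:bounding_support_and_image} the associated wreath-product elements lie in $\Ball_{\mathbb{F}_p \wr \mathbb{Z}}(Cn)$ for a universal constant $C$) satisfying: (i)~$P_2^{(n)} \neq x^\ell P_1^{(n)}$ in $\mathbb{F}_p[x, x^{-1}]$ for any $\ell$, so that the pair is non-conjugate in $\mathbb{F}_p \wr \mathbb{Z}$; and (ii)~every cofinite ideal $I \subset \mathbb{F}_p[x, x^{-1}]$ that avoids the difference family $\{P_2^{(n)} - x^\ell P_1^{(n)} : \ell \in \mathbb{Z}\}$ has $\mathbb{F}_p$-codimension at least $n$. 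Combined with the reduction above, these together force $\CD_{\mathbb{F}_p \wr \mathbb{Z}}((P_1^{(n)}, 0), (P_2^{(n)}, 0)) \geq p^n \geq 2^n$, which completes the proof.

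The main obstacle will be arranging condition~(ii), which demands that the $\mathbb{Z}$-orbit of $P_1^{(n)}$ under multiplication by powers of $x$ be densely distributed in the cofinite ideals of $\mathbb{F}_p[x, x^{-1}]$ of low codimension. My strategy is to take $P_1^{(n)} = 1$, so that the non-conjugacy condition in a residue ring $R = \mathbb{F}_p[x, x^{-1}]/I$ reduces to the statement that $\overline{P_2^{(n)}}$ does not lie in the cyclic subgroup $\langle \bar x \rangle \leq R^{\times}$, and to exploit the factorisation of cofinite ideals of $\mathbb{F}_p[x, x^{-1}]$ as products of powers of maximal ideals $(f)$ with $f \neq x$ a monic irreducible polynomial. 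The residue fields $\mathbb{F}_p[x, x^{-1}]/(f) \cong \mathbb{F}_{p^{\deg f}}$ have multiplicative structure controlled by the order of $\bar x$, and a careful selection of $P_2^{(n)}$ of degree $\sim n$, based on irreducible polynomials and the multiplicative structure of finite extensions of $\mathbb{F}_p$, should show that every witness ideal $I$ witnessing non-conjugacy must involve a factor of codimension at least $n$, yielding the required exponential lower bound and, via Lemma~\ref{lemma:lamplighter conjugacy embedded}, the proposition.
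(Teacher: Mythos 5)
Your reduction steps are fine and match the paper's: Lemma \ref{lemma:lamplighter conjugacy embedded} brings everything down to $\mathbb{F}_p \wr \mathbb{Z}$, and Lemmas \ref{ideal_normal} and \ref{conjugacy_class} correctly turn the problem into a statement about cofinite ideals of $\mathbb{F}_p[x,x^{-1}]$. The gap is in the heart of the argument, which you explicitly defer: you never construct $P_2^{(n)}$, and with your choice of elements of the form $(P_1,0),(P_2,0)$ with $P_1=1$ (i.e.\ elements of the base group, trivial $\mathbb{Z}$-coordinate) the required property (ii) appears unattainable. Since the conjugacy class of $(1,0)$ is just the orbit $\{(x^\ell,0)\}$, condition (ii) demands that $P_2$ be congruent to a power of $x$ modulo \emph{every} polynomial $g$ prime to $x$ of degree less than $n$, while $\deg P_2 = O(n)$. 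Two concrete obstructions: first, if $P_2$ has any irreducible factor $g \neq x$ of degree $<n$, the quotient by $(g)$ already separates the pair (there $\bar P_2 = 0$, which is never a power of the unit $\bar x$), so $P_2$ must essentially be a monomial times irreducibles of degree $\geq n$; second, taking the moduli $g = x^e-1$ for all $e<n$ forces the coefficient vector of $P_2$ folded modulo $e$ to be a standard basis vector for every such $e$, and already for $P_2$ with a bounded number of nonzero terms this forces $e \mid (\text{some fixed exponent difference} \leq Cn)$ for all $e<n$, i.e.\ $\lcm(1,\dots,n-1) = e^{(1+o(1))n}$ divides a quantity of size $O(n)$, which is impossible. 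So the "careful selection of $P_2$" is not a routine verification but the entire difficulty, and with trivial $\mathbb{Z}$-part it seems to fail outright.

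The paper avoids this by choosing elements with a \emph{large nonzero} $\mathbb{Z}$-coordinate: $f_i=(x^{q_i}-1,\,q_i)$ and $g_i=(x-1+x^{q_i}-1,\,q_i)$, where $q_i$ is a prime with $p$ a primitive root mod $q_i$. By Lemma \ref{conjugacy_class} the conjugacy class of $(P,q_i)$ is $\{(x^\ell P + (x^{q_i}-1)Q,\,q_i)\}$, so it contains a whole coset of the ideal $(x^{q_i}-1)$; consequently, in any quotient by a cofinite ideal $(P)$ with $\gcd(P, x^{q_i}-1)=1$ a Bézout identity makes the two elements conjugate, and in the remaining cases either $x-1$ divides the generator (the images coincide) or the generator is divisible by the irreducible $\psi_{q_i}$ of degree $q_i-1$, forcing the quotient to have size at least $p^{q_i-1}$. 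Non-conjugacy in $\mathbb{F}_p\wr\mathbb{Z}$ itself is witnessed by the quotient modulo $(\psi_{q_i})\rtimes q_i\mathbb{Z}$. This extra ideal term $(x^{q_i}-1)Q$ in the conjugacy class is exactly the mechanism your proposal lacks, and without it (or some substitute) the exponential lower bound does not follow.
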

\begin{proof}
By Lemma \ref{lemma:lamplighter conjugacy embedded}, we may assume that $A \cong \mathbb{F}_p$ for some prime and that $B \cong \mathbb{Z}$. We need to find an infinite sequence of pairs of elements $\{f_i,g_i\}^{\infty}_{i = 1}$ such that 
\begin{enumerate}
    \item[(i)] $\lim_{i \to \infty} \text{max}\left\{\|f_i\|, \|g_i\|\right\}  = \infty$,
    \item[(ii)] $f_i \nsim g_i$,
    \item[(iii)] $p^{C \text{max}\left\{\|f_i\|, \|g_i\|\right\}} \leq \CD_{\mathbb{F}_p\wr \mathbb{Z}}(f_i,g_i)$,
\end{enumerate}
where $C > 0$ is some constant.

Let $\{q_i\}_{i=1}^\infty$ be an enumeration of the set of primes greater than $p$ such that $p$ is a primitive root mod $q_i$. In this case, it is well known that $\psi_{q_i}(x) = \sum_{i=1}^{q_i-1}x^i$ is an irreducible polynomial over $\mathbb{F}_p$. Let 
\begin{displaymath}
f_i = \left(x^{q_i} - 1, q_i\right) \quad \text{ and } \quad g_i =  \left(x - 1  + x^{q_i} - 1, q_i\right).
\end{displaymath}
Let us consider the quotient $\mathbb{F}_p[x, x^{-1}] / (\psi_{q_i}(x)) \rtimes (\mathbb{Z} / q_i \mathbb{Z}))$ with the associated projection map $\pi_i$. We then see that
\begin{displaymath}
|\mathbb{F}_p[x, x^{-1}] / (\psi_{q_i}(x)) \rtimes (\mathbb{Z} / q_i \mathbb{Z})| = q_i p^{q_i - 1}
\end{displaymath}
and that
\begin{displaymath}
    \pi_i(f_i) = (0,0) \quad \text{ and } \quad \pi_i(g_i) = (x-1, 0) \neq (0,0). 
\end{displaymath}
It follows that $\pi(f_i) \nsim \pi(g_i)$. Subsequently, we see that $f_i$ and $g_i$ are not conjugate in $\mathbb{F}_p \wr \mathbb{Z}$ and that $\CD_{\mathbb{F}_p\wr \mathbb{Z}}(f_i,g_i) \leq q_i p^{q_i -1}$.

To finish, we will demonstrate that $p^{q_i} \leq \CD_{\mathbb{F}_p\wr \mathbb{Z}}(f_i,g_i)$ for all $i$. In other words, we need to show that if $N \trianglelefteq_{f.i.} \mathbb{F}_p \wr \mathbb{Z}$ is given such that  $|(\mathbb{F}_p \wr \mathbb{Z}) / N| < p^{q_i}$, then $f_i \sim g_i \text{ mod } N$. Suppose that such a normal finite index subgroup $N$ is given. We note by Lemma \ref{ideal_normal} that $\mathcal{J}_N = N\cap \mathbb{F}_p[x, x^{-1}]$ is an ideal in $\mathbb{F}_p[x, x^{-1}]$. In particular, $\mathcal{J}_N \rtimes N \cap \mathbb{Z} \leq N$ is a normal subgroup in $\mathbb{F}_p \wr \mathbb{Z}$ such that if $f_i \sim g_i \text{ mod } \mathcal{J}_N \rtimes N \cap \mathbb{Z},$ then $f_i \sim g_i \text{ mod } N$. Thus,for the purpose of the proof, we may assume that $N \cong \mathcal{J} \rtimes t \mathbb{Z}$ for some $t \in \mathbb{N}$. If $\mathcal{J}_N = \mathbb{F}_p[x, x^{-1}]$, then $(\mathbb{F}_p \wr \mathbb{Z}) / N$ is a finite abelian group. In particular, we have that $\mathbb{F}_p[x, x^{-1}] \leq \ker(\pi_N),$ and thus, $\pi_N(f_i) = \pi_N(g_i)$. Hence, we may assume that $\mathcal{J}_N$ is a proper ideal in $\mathbb{F}_p[x, x^{-1}]$. Moreover, we have that $|\mathbb{F}_p[x, x^{-1}] / \mathcal{J}_N| < p^{q_i}.$ 

Since $\mathbb{F}[x, x^{-1}]$ is a localisation of a principal ideal domain, it is also a principal ideal domain. Therefore, there exists a polynomial $P \in \mathbb{F}_p[x]$ such that $\mathcal{J}_N = (P)$. Thus, we note that one of the following cases must hold:
\begin{displaymath}
    \gcd(x^{q_i} - 1, P) =  \begin{cases}
                                x^{q_i} - 1,\\
                                \psi_{q_i},\\
                                x - 1,\\
                                1
                            \end{cases}.
\end{displaymath}
    
Let us first note that $\mathbb{F}_p[x,x^{-1}]/(P) \leq \left(\mathbb{F}_p[x, x^{-1}]\rtimes \mathbb{Z}\right)/N$. We see that we may ignore the first two cases, as in both we have that $ p^{q_i} \leq \left|\mathbb{F}_p \wr \mathbb{Z} / N \right|$.

For the third case, we have that $x - 1 \in \mathcal{J}_N$. Therefore, we have
\begin{eqnarray*}
    \pi_N(f_i)  &=& (x^{q_i} - 1 \text{ mod } \mathcal{J}_N, q_i \text{ mod } t)\\
                &=& ((x - 1)\psi_{q_i}(x) \text{ mod } \mathcal{J}_N, q_i \text{ mod } t)\\
                &=& (0, q_i \text{ mod } t).
\end{eqnarray*}
Similarly, we have
\begin{eqnarray*}
    \pi_N(g_i)  &=& (x-1 + (x - 1)\psi_{q_i}(x) \text{ mod } \mathcal{J}, q_i \text{ mod } t)\\
                &=& (0, q_i \text{ mod } t).
\end{eqnarray*}
Hence, $\pi_N(f_i) = \pi_N(g_i).$

For the last case, we may assume that $\gcd(x^{q_i} - 1, P) = 1$. Let us recall that, following Lemma \ref{conjugacy_class}, we can write the conjugacy class of $f_i$ as
\begin{displaymath}
    \left\{ \left.  (x^{n}(x^{q_i} - 1) + ( x^{q_i} - 1) \lambda ,q_i) \: \right| \: n \in \mathbb{Z}, \lambda \in \mathbb{F}_p[x, x^{-1}]\right\}.
\end{displaymath}
In order for $f_i \sim g_i \text{ mod } N$, we need to have
\begin{displaymath}
    x-1 + x^{q_i} - 1 \in \{x^n(x^{q_i} - 1) + (x^{q_i} - 1) \lambda \: |\: n \in \mathbb{Z}, \lambda \in \mathbb{F}_p[x, x^{-1}] \} \text{ mod } (P).
\end{displaymath}
The above is equivalent to
\begin{displaymath}
    x - 1 \in \{(x^n + \lambda - 1)(x^{q_i} - 1) \: | \: n \in \mathbb{Z}, \lambda \in \mathbb{F}_p[x, x^{-1}] \} \text{ mod } (P).
\end{displaymath}
Using basic algebra, we see that the above is equivalent to
\begin{displaymath}
    x - 1 \in \{\lambda (x^{q_i} - 1) \: |  \lambda \in \mathbb{F}_p[x, x^{-1}] \} \text{ mod } (P).
\end{displaymath}
Thus, we have that $f_i \sim g_i \text{ mod } N$ if and only if $x - 1 \in (x^{q_i} - 1) \text{ mod } (P)$.

Since $\gcd(x^{q_i} - 1, P) = 1,$ there exist polynomials $\alpha, \beta \in \mathbb{F}_p[X]$ such that
$$
(x^{q_i} - 1) \alpha + P \beta = 1.
$$
By multiplying through by $x-1$, we may write
$$
x - 1 = (x - 1)(x^{q_i} - 1) \alpha + (x-1) P \beta .
$$
Reducing mod $(P)$, we have
$$
x - 1 = (x - 1) \alpha (x^{q_i} - 1)   \text{ mod } (P).
$$ 
We see that $f_i \sim g_i \text{ mod } N,$ and therefore,
$$
p^{q_i} \leq \CD_{\mathbb{F}_p \wr \mathbb{Z}}(f_i, g_i) \leq  q_i p^{q_i - 1}.
$$

From the construction of the elements $f_i, g_i$, it can be easily seen that there is a constant $C'$ such that
\begin{displaymath}
    q_i \leq \|f_i\| \leq C' q_i \quad \text{and} \quad q_i \leq \|g_i\| \leq C' q_i.
\end{displaymath}
There, we have that $p^n \leq \Conj_{\mathbb{F}_p \wr \mathbb{Z}}(C' n)$. Hence, we may write
$$
2^n \preceq \Conj_{\mathbb{F}_p \wr \mathbb{Z}}(n)
$$
since $2^n \approx p^n$.
\end{proof}

\subsection{Lower bounds for $\Conj_{A \wr B}(n)$ when $A$ and $B$ are infinite} $\:$ \newline
 In this subsection, we provide asymptotic lower bounds for $\Conj_{A \wr B}(n)$ where $A$ and $B$ are infinite, finitely generated abelian groups. We start with the group $\mathbb{Z} \wr \mathbb{Z}$ as seen in Proposition \ref{infinite_lower_bound}. Before we start, we have the following lemma.
 
 \begin{lemma}\label{mod_ideal}
     Let $m,n \in \mathbb{N}$ and $d \in \mathbb{Z}$. Then
     $$
     (x^{m} - 1) \equiv ( x^{\gcd(m,n)} - 1 ) \text{ mod } (x^n - 1, d).
     $$
 \end{lemma}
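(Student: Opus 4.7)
My plan is to read the congruence as an equality of principal ideals in the quotient ring $\mathbb{Z}[x,x^{-1}]/(x^n - 1, d)$: namely, that $x^m - 1$ and $x^{\gcd(m,n)} - 1$ generate the same ideal there. (As mere elements they generally differ; for instance $(x^6 - 1) - (x^2 - 1) = x^2(x^4 - 1)$ does not lie in $(x^8 - 1)$.) Setting $g = \gcd(m,n)$, this reduces to showing the ideal equality $(x^m - 1,\, x^n - 1,\, d) = (x^g - 1,\, x^n - 1,\, d)$ in the Laurent polynomial ring $\mathbb{Z}[x, x^{-1}]$, which I would split into the two containments.

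The containment $(x^m - 1) \subseteq (x^g - 1)$ is immediate from the fact that $g$ divides $m$, via the standard factorization $x^m - 1 = (x^g - 1)(1 + x^g + x^{2g} + \cdots + x^{m-g})$, and neither $d$ nor $x^n - 1$ is needed for this direction. For the reverse containment, I would invoke Bezout's identity to choose integers $a, b$ with $am + bn = g$, so that $x^g = x^{am} \cdot x^{bn}$. The congruences $x^{am} \equiv 1 \pmod{x^m - 1}$ and $x^{bn} \equiv 1 \pmod{x^n - 1}$ would then combine to give $x^g \equiv 1 \pmod{(x^m - 1,\, x^n - 1)}$, hence $x^g - 1 \in (x^m - 1,\, x^n - 1,\, d)$, as required.

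The only mild technical point is that the Bezout coefficients $a, b$ may be negative, which is precisely where it matters that we work in the Laurent polynomial ring $\mathbb{Z}[x, x^{-1}]$ rather than $\mathbb{Z}[x]$. For $a \geq 0$ the congruence $x^{am} \equiv 1 \pmod{x^m - 1}$ is just the standard factorization of $x^{am} - 1$, while for $a < 0$ one multiplies the positive-exponent version through by the unit $x^{am}$ (which exists in the Laurent ring because $x$ is invertible) to obtain the desired congruence. The integer $d$ plays no essential role in the argument; it is simply absorbed into the ambient ideal throughout, and in fact the statement holds with $d = 0$.
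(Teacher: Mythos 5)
Your proposal is correct and follows essentially the same route as the paper: both read the congruence as an equality of ideals modulo $(x^n-1,d)$ and prove the two inclusions, one from the divisibility $x^{\gcd(m,n)}-1 \mid x^m-1$, the other from Bezout's identity $\gcd(m,n)=am+bn$ (the paper expands $(x^{am}-1)(x^{bn}-1)$ while you multiply the congruences $x^{am}\equiv 1$ and $x^{bn}\equiv 1$, which is the same computation). Your explicit treatment of negative Bezout coefficients via invertibility of $x$ in the Laurent ring is a detail the paper leaves implicit, but it does not change the argument.
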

\begin{proof}
We note that $m = \ell \gcd( m,n)$  for some integer $\ell$. Therefore,
$$
x^m \equiv x^{\ell \gcd(m,n)} \equiv 1 \text{ mod } (x^{\gcd(m,n)} - 1 ).
$$
Hence, $x^{\gcd(m,n)} - 1\mid x^{m} - 1 $, and thus,
$$
\left( x^m - 1 \right) \subset (x^{\gcd(m,n)} - 1 ) \text{ mod } (x^n - 1, d).
$$

For the other inclusion, we note that there exist integers $t,s$ such that $\gcd(m,n) = tm + sn.$ Hence, we may write
\begin{eqnarray*}
(x^{tm} - 1)(x^{sn} - 1) =& x^{tm + sn} - x^{tm} - x^{sn} + 1 &\text{ mod } (x^{n} - 1,d) \\
 =& x^{\gcd(m,n)} - x^{sn} -(x^{tm} - 1) &\text{ mod } (x^n - 1, d)\\
\equiv & x^{\gcd(m,n)} - 1 - (x^{tm} - 1 )& \text{ mod } (x^n - 1, d).
\end{eqnarray*}
Since $(x^{tm} - 1)(x^{sn} - 1) \in (x^n - 1,d)$, we have
$$
x^{\gcd(m,n)} - 1 \equiv x^{tm} - 1 \text{ mod } (x^n - 1,d).
$$
Hence,
$$
(x^{m} - 1) \equiv  (x^{\gcd(m,n)} - 1) \text{ mod }(x^n - 1,d). \qedhere
$$
\end{proof}

We now come to the last proposition of this section.

\begin{proposition}\label{infinite_lower_bound}
    Let $A$ and $B$ be infinite, finitely generated abelian groups. Then
    $$
    (\log n)^{n} \preceq \Conj_{A \wr B}(n).
    $$
\end{proposition}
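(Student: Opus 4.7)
The plan is to first reduce to the case $A = B = \mathbb{Z}$ and then to build, inside $\mathbb{Z} \wr \mathbb{Z} \cong \mathbb{Z}[x,x^{-1}] \rtimes \mathbb{Z}$, an explicit family of non-conjugate pairs whose conjugacy depths realise the $(\log n)^n$ growth rate.

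Since $A$ and $B$ are infinite, finitely generated abelian groups, each splits as $\mathbb{Z}^{k} \oplus \Tor$ with $k \geq 1$ and so admits $\mathbb{Z}$ as a retract. Invoking Lemma \ref{lemma:conjugacy embedded subgroups} for the retract pair $R_A = R_B = \mathbb{Z}$ realises $\mathbb{Z} \wr \mathbb{Z}$ as a retract of $A \wr B$, and Lemma \ref{lemma:retracts lower bound} then gives $\Conj_{\mathbb{Z} \wr \mathbb{Z}}(n) \preceq \Conj_{A \wr B}(n)$. It therefore suffices to prove $(\log n)^n \preceq \Conj_{\mathbb{Z} \wr \mathbb{Z}}(n)$.

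Guided by the lamplighter construction of Proposition \ref{rank_one_lamplighter}, for each sufficiently large $i$ I would pick a prime $m_i$ and a difference polynomial $D_i \in \mathbb{Z}[x,x^{-1}]$ and set
\[
    f_i = \left(x^{m_i} - 1, \, m_i\right), \qquad g_i = \left(D_i + x^{m_i} - 1, \, m_i\right).
\]
The polynomial $D_i$ should be chosen so that (i) $D_i \notin (x^{m_i} - 1) \mathbb{Z}[x,x^{-1}]$, which by Lemma \ref{conjugacy_class} guarantees non-conjugacy, and (ii) its ``arithmetic content'' includes the primorial of $m_i$ while its ``cyclotomic content'' is non-zero modulo $\Phi_{m_i}$, so as to block the cheap separating quotients coming from $x \mapsto 1$ and from smaller cyclotomic factors. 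By Lemma \ref{ideal_normal} every finite-index normal subgroup of $\mathbb{Z} \wr \mathbb{Z}$ takes the form $\mathcal{J} \rtimes t\mathbb{Z}$ with $\mathcal{J}$ cofinite and $x^t - 1 \in \mathcal{J}$. A case analysis on $\gcd(m_i,t)$ via Lemma \ref{mod_ideal} would then show that a separating $\mathcal{J}$ is forced to contain $x^{m_i} - 1$ (so $t \geq m_i$) and cannot contain any small integer (so that $\mathcal{J} \cap \mathbb{Z} = d\mathbb{Z}$ with $d$ at least of order $\log \|f_i\|$). Combining these, the separating finite-index normal subgroup would have index at least $d^t \cdot t \succeq (\log n_i)^{n_i}$, where $n_i$ is comparable to $\|f_i\|$.

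The main obstacle, and the most delicate step, will be producing $D_i$ whose word length is $O(m_i)$ while simultaneously forcing $d$ to be at least of order $\log m_i$ and non-vanishing under the cyclotomic projection at $\Phi_{m_i}$; a naive choice either blows up the word length (so the resulting bound degrades to $(\log n)^{\log n}$) or admits an intermediate cyclotomic quotient of size $d^{\phi(\ell)}$ for some $\ell \mid m_i$, which shortcuts the desired bound. Reconciling these competing constraints, by a careful interplay of Lemma \ref{lemma:bounding_support_and_image} for the word-length estimate and Lemma \ref{mod_ideal} for the ideal-theoretic reduction, is the technical heart of the argument.
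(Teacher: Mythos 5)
Your opening reduction to $\mathbb{Z}\wr\mathbb{Z}$ via Lemma \ref{lemma:conjugacy embedded subgroups} and Lemma \ref{lemma:retracts lower bound} is correct and is exactly how the paper's proof begins. Beyond that, however, what you have is a plan rather than a proof: the decisive step --- the actual choice of the pairs $(f_i,g_i)$, that is, of the exponent and of the difference polynomial $D_i$, together with the verification against all cofinite ideals --- is left open, and you say so yourself (``the technical heart of the argument''). Moreover, the desiderata you impose on $D_i$ are both inconsistent and insufficient as stated. Requiring the arithmetic content of $D_i$ to contain the primorial of $m_i$ forces $\|D_i\|\geq e^{(1+o(1))m_i}$, which flatly contradicts $\|D_i\|=O(m_i)$; this is not a delicate tension to be finessed but an impossibility, and it is also far more than is needed, since the target $(\log n)^n$ only requires forcing the characteristic $d$ of a separating quotient to exceed roughly $\log$ of the word length, not anything comparable to $m_i$. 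In addition, your conditions do not rule out separating quotients of the form $(d,x^b-1)\rtimes b\mathbb{Z}$ with $b$ small and $d$ large: if $m_i\nmid b$, Lemma \ref{mod_ideal} collapses $x^{m_i}-1$ to $x-1$, and such a quotient, of size only about $d\cdot b$, separates the pair unless $D_i\in(x-1,d)$ for \emph{every} $d$, i.e.\ unless $D_i(1)=0$. Divisibility of $D_i(1)$ by a fixed integer, however large, does not achieve this, so with your conditions the conjugacy depth of your pairs could be as small as roughly $\log n$.

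For comparison, the paper resolves exactly these two issues by re-balancing the parameters. Fix a prime $q_i$, set $\alpha(i)=\lcm(1,\dots,q_i-1)$ (so $\alpha(i)\approx 2^{cq_i}$ by the Prime Number Theorem), choose $2^{k_i}\approx\alpha(i)$, and take $f_i=(\alpha(i)(x^{2^{k_i}}-1),2^{k_i})$ and $g_i=(\alpha(i)(x^{2^{k_i}}-1+x^{2^{k_i-1}}-1),2^{k_i})$. Because every coefficient is divisible by $\alpha(i)$, any quotient whose characteristic $d$ satisfies $d<q_i$ kills both function parts, so a separating quotient must have $d\geq q_i$; because the exponents are powers of two, for any $b$ with $2^{k_i}\nmid b$ the collapse $x^{\gcd(2^{k_i},b)}-1$ of Lemma \ref{mod_ideal} divides the difference $\alpha(i)(x^{2^{k_i-1}}-1)$, so by Lemma \ref{conjugacy_class} the images become conjugate; hence a separating quotient has order at least $q_i^{2^{k_i}}$. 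Since the word length $n_i$ of these elements is exponential in $q_i$ and polynomially comparable to $2^{k_i}$, one gets $q_i\approx\log n_i$ and $q_i^{2^{k_i}}\succeq(\log n_i)^{n_i}$. The moral is that forcing $d\geq q$ costs word length about $\lcm(1,\dots,q-1)\approx e^{q}$, so the threshold for $d$ must be taken at scale $\log n$ while the support spread, not the prime, is taken comparable to $n$. If you wish to keep your parametrisation with a single exponent $m$, the analogous fix is $D=\beta(x-1)$ with $\beta=\lcm(1,\dots,r)$ and $e^{r}\approx m$: then $D(1)=0$ disposes of all quotients with $m\nmid b$, and $d\mid\beta$ disposes of all characteristics $d\leq r\approx\log m$. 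Without some such explicit choice and the accompanying ideal-theoretic verification, the stated lower bound is not established.
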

\begin{proof}
Let us first note that we may choose splittings of $A$ and $B$ as direct sums $A \simeq  \mathbb{Z}^k \oplus \Tor(B)$ and $B \simeq \mathbb{Z}^d \oplus \Tor(B)$. Since we assumed that both $A,B$ are infinite, we see that $d,k > 0$. In particular, $A$ contains an element $a$ of an infinite order such that $\langle a\rangle$ is an retract of $A$ and $B$ contains an element $b$ of an infinite order such that $\langle b \rangle$ is an retract of $B$. By Lemma \ref{lemma:conjugacy embedded subgroups} we see that the subgroup $\mathbb{Z} \wr \mathbb{Z} \simeq \langle a \rangle \wr \langle b \rangle$ 
is a retract of $A \wr B$ and $\Conj_{\mathbb{Z} \wr \mathbb{Z}}(n) \preceq \Conj_{A \wr B}(n)$. Hence, we may assume that $A \wr B \cong \mathbb{Z} \wr \mathbb{Z}$.

    We need to find an infinite sequence of pairs of nonconjugate elements $\{f_i, g_i\}$ such that $\log(C \text{max}\{\|f_i\|, \|g_i\|\})^{C \text{max}\{\|f_i\|, \|g_i\|\}} < \CD_{\mathbb{Z} \wr \mathbb{Z}}(f_i, g_i)$ for some $C>0$. For ease of writing, we denote
    \begin{displaymath}
        \mathbb{Z} \wr \mathbb{Z} \simeq \mathbb{Z}[x, x^{-1}] \rtimes \mathbb{Z}
    \end{displaymath}
    where $\mathbb{Z}$ acts by multiplication on $\mathbb{Z}[x, x^{-1}]$ by $x$.

    Let $\{q_i\}$ be an enumeration of the primes, and let $\alpha(i) = \lcm(1, \cdots, q_i - 1)$. Finally, let $k_i$ be the smallest integer such that $\alpha(i) \leq 2^{k_i}$. We define the elements $f_i, g_i \in \mathbb{Z} \wr \mathbb{Z}$ as
    \begin{displaymath}
        f_i = (\alpha(i)(x^{2^{k_i}} - 1),2^{k_i}) \quad \text{ and } \quad g_i= (\alpha(i) (x^{2^{k_i}} - 1 + x^{2^{k_i - 1}} - 1), 2^{k_i}).
    \end{displaymath}
    To see that $f_i$ is not conjugate to $g_i$, we set $\mathfrak{k}_i$ be the ideal in $\mathbb{Z}[x,x^{-1}]$ given by $(2^{k_i}, x^{2^{k_i}}-1)$, and let $H = \mathfrak{k}_i \rtimes  q_i\mathbb{Z} \leq \mathbb{Z} \wr \mathbb{Z}$. We see that $|(\mathbb{Z} \wr \mathbb{Z}) / H | = 2^{k_i} q_i^{2^{k_i}}$ and that
    \begin{displaymath}
        \pi_H (f_i) = (0,0) \quad \text{ and } \quad \pi_H(g_i) = \left(\alpha(i) ( x^{2^{k_i - 1}} - 1), 0 \right) \neq (0,0)
    \end{displaymath}
    where $\pi_H \colon \mathbb{Z} \wr \mathbb{Z} \to (\mathbb{Z} \wr \mathbb{Z})/H$ is the natural projection. Therefore, $f_i \not\sim g_i$ in $\mathbb{Z} \wr \mathbb{Z}$.

    Now suppose that $N \trianglelefteq \mathbb{Z} \wr \mathbb{Z}$ is a finite index subgroup where $|(\mathbb{Z} \wr \mathbb{Z}) / N| < q_i^{2^{k_i}}$. We will show that  $f_i N \sim g_i N$ in $(\mathbb{Z} \wr \mathbb{Z}) / N$.
    
    We note by Lemma \ref{ideal_normal} that $\mathcal{J}_N = N \cap \mathbb{Z}[x,x^{-1}]$ is an ideal in $\mathbb{Z}[x, x^{-1}]$. In particular, $\mathcal{J} \rtimes (N \cap \mathbb{Z})$ is a normal subgroup in $\mathbb{Z} \wr \mathbb{Z}$. Similarly, $N \cap \mathbb{Z} = b \mathbb{Z}$ for some $b \in \mathbb{Z}$. Therefore, we denote $N' =\mathcal{J}_N \rtimes b \mathbb{Z}$. Thus, it follows that $N'$ is a finite index normal subgroup of $\mathbb{Z} \wr \mathbb{Z}$ where $N' \leq N$. In particular, if $f_i N \not\sim g_i N$ in $(\mathbb{Z} \wr \mathbb{Z})/N$, then $f_i N' \not\sim g_i N'$ in $(\mathbb{Z} \wr \mathbb{Z})/N'$. Therefore, we may assume that $(\mathbb{Z} \wr \mathbb{Z})/N$ takes the form $(\mathbb{Z}[x,x^{-1}] / \mathcal{J}) \rtimes ( \mathbb{Z} / b \mathbb{Z})$ where $\mathcal{J}$ is a cofinite ideal and $b$ is an integer.

    Following Lemma \ref{lemma:conjugacy through kernel}, we see that $f_i N'' \nsim g_i N''$ for $N'' = \mathcal{J} \rtimes (b\mathbb{Z} + K)$ where $K \leq \mathbb{Z}$ is the preimage under the projection modulo $b$ of the kernel of the action of $\mathbb{Z}/b \mathbb{Z}$ on $\mathbb{Z}[x, x^{-1}]/\mathcal{J}_N$. Letting $b_0 \geq 0$ be such that $b_0 \mathbb{Z} = b\mathbb{Z} + K$, we note that $N''$ is a finite index normal subgroup where
    \begin{displaymath}
        (\mathbb{Z} \wr \mathbb{Z})/N'' = \left( \mathbb{Z}[x, x^{-1}] \rtimes \mathbb{Z}\right)/(\mathcal{J}_N \rtimes b \mathbb{Z}) \simeq (\mathbb{Z}[x, x^{-1}]/\mathcal{J}_N )\rtimes (\mathbb{Z}/b\mathbb{Z}).
    \end{displaymath}
    Therefore, by the above discussion, we may assume that 
    $$
    (\mathbb{Z} \wr \mathbb{Z})/N \cong (\mathbb{Z}[x,x^{-1}]/ \mathcal{J}) \rtimes (\mathbb{Z} / b \mathbb{Z})
    $$ 
    where $\mathbb{Z} / b \mathbb{Z}$ acts faithfully on $\mathbb{Z}[x,x^{-1}] / \mathcal{J}.$

    We now show we may assume that $\mathbb{Z}/b\mathbb{Z}$ acts freely on $\mathbb{Z}[x,x^{-1}] / \mathcal{J}$. Suppose that there are polynomials  $\rho(x), \lambda(x) \in \mathbb{Z}[x,x^{-1}]/\mathcal{J}$ such that 
    $$
    x^m \rho(x) + \mathcal{J}_N = x^m \lambda(x) + \mathcal{J}_N
    $$
    for some $0 \leq m < b$. Since $x^m$ is a unit, we may cancel and write 
    $$
    \rho(x) +  \mathcal{J}_N = \lambda(x) + \mathcal{J}_N
    $$
    which gives our claim.

    Let $\ell$ be the multiplicative order of $x + \mathcal{J}$ in $\mathbb{Z}[x,x^{-1}] / \mathcal{J}$. We claim that $\ell = b$. By definition, we have that $b$ is the smallest integer such that 
    $$
    x^{b} \rho(x) + \mathcal{J} = \rho(x) + \mathcal{J}
    $$
    for all $\rho(x) \in \mathbb{Z}[x,x^{-1}]$. In particular, we have that $x^{b} \cdot 1  = 1 \mod \mathcal{J}$. Thus, we have that $\ell \mid b$. If $\ell \lneq b$, we then have that $x^\ell\rho(x) = \rho(x) \mod \mathcal{J}$ for all $\rho(x) \in \mathbb{Z}[x,x^{-1}]$. However, that implies $\mathbb{Z} / b \mathbb{Z}$ doesn't act faithfully on $\mathbb{Z}[x,x^{-1}]/\mathcal{J}$ which is a contradiction. Therefore, we have that $\ell = b$.

Since $\mathbb{Z} / b \mathbb{Z}$ acts freely and transitively on the set of powers of $x \text{ mod } \mathcal{J}$ in $\mathbb{Z}[x,x^{-1}] /\mathcal{J}$, we have that $|\mathbb{Z}[x,x^{-1}]/\mathcal{J}| = d^{b}$ where $d$ is the characteristic of the finite ring $\mathbb{Z}[x,x^{-1}]/\mathcal{J}_N$. We note that the ideal $(d, x^{b} - 1)$ is contained in the ideal $\mathcal{J}$ and that $|\mathbb{Z}[x,x^{-1}] / (d, x^{b} - 1)| = d^{b}$. It follows that $\mathcal{J} = (d, x^{b} - 1)$.

    If $d < q_i$, then $d \mid \alpha(i)$, and subsequently, 
    $$
    \alpha(i)(x^{2^{k_i}} - 1), \alpha(i)(x^{2^{k_i}} -1 + x^{2^{k_i - 1}} - 1) \in (d, x^{b_{0}} - 1).
    $$ Hence, $f_i = g_i \text{ mod } N$. Therefore, we may assume that $d \geq q_i.$
    
    By Lemma \ref{conjugacy_class}, we may write the conjugacy class of $f_i$ as
    $$
    \{((x^{\ell} (x^{2^{k_i}} - 1) + (x^{2^{k_i}} - 1)Q, 2^{k_i}) \: | \: \ell \in \mathbb{Z}, Q \in \mathbb{Z}[x,x^{-1}] \}.
    $$
    Thus, we have that $f_i \sim g_i$ if and only if
    $$
    x^{2^{k_i}} - 1 + x^{2^{k_i-1}} -1 \in \{x^{\ell} (x^{2^{k_i}} - 1) + (x^{2^{k_i}} - 1)Q, \: | \: \ell \in \mathbb{Z}, Q \in \mathbb{Z}[x,x^{-1}] \}
    $$
    which is equivalent to
    $$
    x^{2^{k_i-1}} - 1 \in \{ (x^\ell - 1 + Q) (x^{2^{k_i}} - 1) \: | \ell \in \mathbb{Z}, Q \in \mathbb{Z}[x,x^{-1}] \}.
    $$
    Since $x^{\ell} - 1 + Q$ can be any Laurent polynomial, we have that $f_i \sim g_i$ if and only if
    $$
    x^{2^{k_i -  1}} - 1 \in \{ Q (x^{2^{k_i}} - 1) \: | \: Q \in \mathbb{Z}[x,x^{-1}] \}.
    $$
    By Lemma \ref{mod_ideal}, we have that
    $$
    x^{2^{k_i}} - 1 \equiv x^{\gcd(2^{k_i},b)} - 1 \text{ mod } (d, x^b - 1).
    $$
    Therefore, we may write the conjugacy class of $f_i$ in $(\mathbb{Z} \wr \mathbb{Z}) / N$ as
    $$
   \{(Q(x^{2^t} - 1) \: | \: Q \in \mathbb{Z}[x,x^{-1}] \} \text{ mod } (d, x^b - 1).
    $$
    where $0 \leq t < k_i.$ Therefore, $f_i N \sim g_i N$ if and only if $$
    x^{2^{k_i - 1}} - 1 \in \{(Q(x^{2^t} - 1) \: | \: Q \in \mathbb{Z}[x,x^{-1}] \} \text{ mod } (d, x^b - 1).
    $$
    Since $2^{t} \mid 2^{k_i - 1}$, it is well known that $x^{2^t} - 1 \mid x^{2^{k_i-1}} - 1$. Therefore, 
    $$
    x^{2^{k_i - 1}} - 1 \in \{(Q(x^{2^t} - 1) \: | \: Q \in \mathbb{Z}[x,x^{-1}] \} \text{ mod } (d, x^b - 1)
    $$
    when $\gcd(2^{k_i -1}, b) \leq 2^{k_i - 1}$. Hence, if $b \leq 2^{k_i - 1},$ then $\gcd(2^{k_i - 1}, b) \leq 2^{k_i - 1}$, and subsequently, $f_i N \sim g_i N$.  We see that
    \begin{displaymath}
    q_i^{2^{k_i}} < \CD_{\mathbb{Z} \wr \mathbb{Z}}(f_i, g_i) < 2^{k_i} q_i^{2^{k_i}}.
    \end{displaymath}

    Recall that $\alpha(i) = \exp\{\upsilon(q_i - 1)\}$, where $\upsilon \colon \mathbb{N} \to \mathbb{N}$ is the second Chebyshev's function. The Prime Number Theorem \cite[1.2]{number_theory} then implies that there are constants $C_0^-, C_0^+ >0$ such that $2^{C^-_0 q_i} \leq \alpha(i) \leq 2^{C^+_0 q_i}$. Following the definition of $k_i$, we see that there are constants $C_1^-, C_1^+ > 0$ such that $2^{C^-_1 q_i} \leq 2^{k_i} \leq 2^{C^+_1 q_i}$. From the construction of the elements $f_i, g_i$, it can be easily seen that there is a constant $C'$ such that
\begin{displaymath}
    \alpha(i)2^{k_i} \leq n_i \leq C' \alpha(i)2^{k_i},
\end{displaymath}
    where $n_i = \max\{\|f_i\|,\|g_i\|\}$. Following the previous discussion, we see that there are constants $C_2^-,C_2^+ > 0$ such that $2^{C_3^- q_i} \leq n_i \leq 2^{C_3^+ q_i}$. In particular, we see that $q_i \leq \log(Cn_i)$ for some $C >0$. Therefore, $q_i^{2^{k_i}} \geq \log(C^-n_i)^{C^-n_i}$. Thu,s we constructed an infinite sequence of non-conjugate elements $f_i, g_i \in \mathbb{Z} \wr \mathbb{Z}$ that are conjugate in every finite quotient of $\mathbb{Z} \wr \mathbb{Z}$ of size smaller than $\log(C^-n_i)^{C^-\max\{\|f_i\|, \|g_i\|\}}$ where $C^->0$ is some constant. Subsequently, we see that
    \begin{displaymath}
        \log(C^- n_i)^{C^- \max\{\|f_i\|, \|g_i\|\}} < \CD_{\mathbb{Z}\wr\mathbb{Z}}(f_i, g_i).
    \end{displaymath}
    Therefore,
    \begin{displaymath}
         (\log(n))^n \preceq \Conj_{\mathbb{Z} \wr \mathbb{Z}}(n),
   \end{displaymath}
   which concludes the proof.
\end{proof}

\section{Upper bounds}
\label{sec:upper bounds}
The aim of this section is to construct upper bounds for the conjugacy depth function of a wreath product  $A \wr B$ of finitely generated abelian groups. The idea is to show that we can always find a  quotient of the acting group $B$ such that Lemma \ref{lemma:abelian_wreath_conjugacy_criterion} can be used to demonstrate that the images of the elements are not conjugate and provide asymptotic bounds on the size of this quotient. Recall that one of the assumptions of Lemma \ref{lemma:abelian_wreath_conjugacy_criterion} is that we are working with reduced elements, i.e. the elements of the supports lie in distinct cosets of the acting element. Thus, in order to ensure we are working with reduced elements, \ref{subsection:simultaneous cosets} we show how to construct a finite quotient of the acting group that separates finite subsets and infinite cyclic subgroups. Subsection \ref{subsection:translations} then deals with the conditions that Lemma \ref{lemma:abelian_wreath_conjugacy_criterion} uses to establish non-conjugacy. In particular, we show that if a quotient of a finitely generated abelian group is of sufficient size, then certain finite subsets do not become translates of each other in the quotient. Finally, subsection \ref{subsection:uppers} combines these methods to construct a finite quotient preserving non-conjugacy of our given non-conjugate elements and gives an upper bound on its size in terms of their word lengths.

Before we proceed, we recall some notation. If $B$ is a finitely generated abelian group, we by fixing a splitting may write $B = \mathbb{Z}^k \oplus \Tor(B)$ where $\Tor(B)$ is the subgroup of finite order elements of $B$ and $k$ is the torsion-free rank of $B$. Letting $\phi \colon B \to \mathbb{Z}^k$ and $\tau \colon B \to \Tor(B)$ denote the natural projections associated to the fixed splitting, we may then write every $x \in B$ uniquely as $x = \phi(x) + \tau(x)$ where we refer to $\phi(x)$ as the torsion-free part of $x$ and $\tau(x)$ as the torsion part of $x$. When given a vector $b = (b_1, \dots, b_k) \in \mathbb{Z}^k$, we denote $\gcd(b) = \gcd(b_1, \dots, b_k).$ Given two real numbers $a<b$, we let $[a,b]$ denote closed interval from $a$ to $b$. Given two vectors $v,w \in \mathbb{R}^k$, we denote their dot product as $v \cdot w$. Finally, for a finite group $T$, we denote its exponent as $\exp(T).$

\subsection{Simultaneous cosets} $\:$ \newline
\label{subsection:simultaneous cosets}
In this subsection, we study effective separability of cosets of cyclic subgroups in finitely generated abelian groups. Given an infinite, finitely generated abelian group $G$, an element $b \in G$, and a finite subset $S \subseteq B_{G}(\ell)$, we give an upper bound in terms of $\|b\|$ and $\ell$ on the size of a finite quotient of the group $G$ such that each pair of cosets of the cyclic subgroup generated by $b$ corresponding to two distinct elements in $S$ remain distinct. In the following arguments, we use the observation that $s_1 \langle b \rangle = s_2 \langle b \rangle$ if and only if $s_1^{-1} s_2 \in \langle b \rangle$.

The following lemma is important for the proof of Lemma \ref{lemma:torus lemma cosets}.
\begin{lemma}
    \label{lemma:1-dim cosets}
    Let $b \in \mathbb{Z}$ satisfy  $b \in [-n, n]$ and $S \subseteq \mathbb{Z}$ be a subset such that $S \subseteq [-Cn, Cn]$ for some constant $C>0$. Suppose that $c$ is a natural number where $|b|c > CN$, and let $m= 2|b|c$. Finally, let  $\pi \colon \mathbb{Z} \to \mathbb{Z}/m\mathbb{Z}$ be the natural projection. 
    
    Then for every $s \in S$, we have that 
    $$
    \pi(s) \in \langle \pi(b) \rangle \text{ in } \mathbb{Z}/m\mathbb{Z} \quad \text{ if and only if } \quad s \in \langle b \rangle \text{ in }\mathbb{Z}.
    $$
    
    Furthermore, if $\pi(s) \in \langle \pi(b) \rangle$, then $\pi(s) = t \pi(b)$ for the smallest integer $t$ with respect to the absolute value such that $s = t b$. In particular, $|t| \leq m$.
\end{lemma}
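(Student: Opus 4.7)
The plan is to prove both directions of the biconditional by unpacking what it means to lie in the cyclic subgroup $\langle \pi(b)\rangle$ in $\mathbb{Z}/m\mathbb{Z}$ and then exploiting the crucial divisibility property that $m = 2|b|c$ is itself divisible by $b$.

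The backward direction is immediate: if $s = tb$ in $\mathbb{Z}$ for some integer $t$, then $\pi(s) = t\pi(b)$, so $\pi(s) \in \langle \pi(b)\rangle$. For the forward direction, suppose $\pi(s) = t\pi(b)$ in $\mathbb{Z}/m\mathbb{Z}$ for some $t \in \mathbb{Z}$. Then $s - tb \in m\mathbb{Z}$, i.e., there is $k \in \mathbb{Z}$ with $s = tb + km = tb \pm 2bck = (t \pm 2ck)\,b$. Thus $s \in \langle b\rangle$ in $\mathbb{Z}$, which gives the nontrivial implication. Note that this step uses nothing about the size of $s$; the divisibility $b \mid m$ is all that is required, and the hypothesis $|b|c > Cn$ is not used yet.

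For the ``furthermore'' part, assume $b \neq 0$ (if $b = 0$ then $m = 0$ and the statement is vacuous or trivial). Because $s \in \langle b\rangle$ in $\mathbb{Z}$, the integer $t$ satisfying $s = tb$ is uniquely determined as $t = s/b$, so $\pi(s) = t\pi(b)$ with this specific $t$. For the size bound, I use the hypotheses on $S$ and $c$: from $|s| \leq Cn$ and $|b| \geq 1$ we get
\begin{equation*}
|t| \;=\; \frac{|s|}{|b|} \;\leq\; \frac{Cn}{|b|} \;<\; c \;\leq\; 2|b|c \;=\; m,
\end{equation*}
where the strict inequality uses $|b|c > Cn$. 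This yields $|t| \leq m$ as claimed.

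No serious obstacle is expected: the whole argument is essentially the observation that choosing $m$ to be a multiple of $b$ makes the reduction map $\pi$ transparent on the subgroup $\langle b\rangle$, and then choosing $c$ large compared to $Cn/|b|$ ensures the unique coefficient expressing $s$ as a multiple of $b$ is small. The only care needed is to handle the trivial edge case $b = 0$ and to keep track of signs when writing $m = \pm 2bc$.
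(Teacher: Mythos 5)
Your proof is correct, but it takes a genuinely different route from the paper's. The paper argues geometrically: $\pi$ is injective on the interval $[-c|b|-1,c|b|]$, which contains both $S$ and a full period of the multiples of $b$, so $\pi(s)\in\pi(\langle b\rangle)$ forces $s$ to coincide with one of those multiples; there the hypothesis $|b|c>Cn$ is already used for the equivalence itself. You instead exploit the divisibility $b\mid m$: since $m=2|b|c$, one has $\pi^{-1}(\langle\pi(b)\rangle)=b\mathbb{Z}+m\mathbb{Z}=\langle b\rangle$, so the equivalence holds with no size assumption at all, and $|b|c>Cn$ enters only to bound $|t|=|s|/|b|<c\leq m$. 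Your argument is cleaner in this one-dimensional setting, but note what each approach buys: your divisibility trick uses that $\langle b\rangle$ is literally $b\mathbb{Z}$, whereas the paper's injectivity-on-a-box argument is the one that carries over (after the change of basis by $T\in\GL_k(\mathbb{Z})$) to the higher-rank version in Lemma~\ref{lemma:torus lemma cosets}, where only $\gcd(b)$ divides $m$. One small point of interpretation: in its own proof, and in the later application in Lemma~\ref{lemma:cosets_torsion}, the ``furthermore'' clause is used in the direction ``if $t$ is chosen minimal in absolute value with $\pi(s)=t\pi(b)$, then $s=tb$ in $\mathbb{Z}$''; your estimate $|t|=|s|/|b|<c$ yields this as well, since exponents representing the same element of $\langle\pi(b)\rangle$ are determined modulo $m/|b|=2c$, so the unique exponent of absolute value less than $c$, namely $s/b$, is the minimal one. (Also, the edge case $b=0$ you set aside cannot occur: $|b|c>Cn\geq 0$ already forces $b\neq 0$.)
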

\begin{proof}
    Observe that the map $\pi \colon \mathbb{Z} \to \mathbb{Z}/m\mathbb{Z}$ is injective on the interval $[-c|b|-1,c|b|]$. We then note that
    \begin{displaymath}
        \pi(\langle b \rangle) = \pi(b \mathbb{Z}) = \pi\left( \{ -(c-1)|b|, -(c-2)|b|, \dots, -|b|, 0, |b|, \dots, (c-1)|b|,c|b|\}\right).
    \end{displaymath}
    Thus, if $\pi(s) \in \pi(\langle b \rangle)$ for some $s \in S$, then $s \in \langle b \rangle$ since the map $\pi$ is injective on the interval $[-c|b|-1,c|b|]$.
    
    Finally, suppose that $\pi(s) \in \langle \pi(b)\rangle$ and that $\pi(s) = a \pi(b)$ in $\mathbb{Z}/m\mathbb{Z}$ where $a \in \mathbb{Z}$ is the smallest such value with respect to the absolute value. Following the previous argument, it follows that $a b \in [-c|b|-1,c|b|]$, and therefore, we have $s = a b$ in $\mathbb{Z}$.
\end{proof}

To deal with the higher-dimensional cases, we first prove two technical lemmas. This first lemma gives bounds of lengths of a free generating basis for the kernel of the linear map given by the dot product with a vector in terms of size of the entries of the vector. For this lemma, when given vectors $v_1, \ldots, v_k \in \mathbb{Z}^n$, we denote $\langle v_1, \ldots, v_k \rangle$ as the subgroup generated by the set $\{v_1, \ldots, v_k\}.$
\begin{lemma}
    \label{lemma:kernel generators}
    Let $b = (b_1, \dots, b_k) \in \mathbb{Z}^k$ be non-trivial, and let $\varphi_{b} \colon \mathbb{Z}^k \to \mathbb{Z}$ be the homomorphism given by $\varphi_{b}(u) = u \cdot b$. Then there are vectors $\lambda^{(1)}, \dots, \lambda^{(k-1)} \in \mathbb{Z}^k$ such that $\ker(\varphi_{b}) = \langle \lambda^{(1)}, \dots, \lambda^{(k-1)} \rangle$ and $\|\lambda^{(i)}\| \leq 2^{k-1}\|b\|$ for all $i$.
\end{lemma}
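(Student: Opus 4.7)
The plan is to argue by induction on $k$. For $k=1$, the kernel $\ker(\varphi_b)$ is trivial since $b \neq 0$, so no generators are needed and the statement holds vacuously.

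For the inductive step $k \geq 2$, after permuting coordinates (which preserves the $\ell_\infty$-norm on $\mathbb{Z}^k$), I would assume $|b_1| \leq |b_2| \leq \cdots \leq |b_k|$, so that $\|b\| = |b_k|$. I would then split into two cases. In the first case, some coordinate of $b$ vanishes; say $b_1 = 0$. Then $e_1 \in \ker(\varphi_b)$ has norm $1$, and the inductive hypothesis applied to $b' = (b_2, \ldots, b_k) \in \mathbb{Z}^{k-1}$ produces $k-2$ generators of $\ker(\varphi_{b'})$ of norm at most $2^{k-2}\|b'\| = 2^{k-2}\|b\|$. Lifting each to $\mathbb{Z}^k$ by inserting a zero in the first coordinate, and appending $e_1$ to the list, yields a basis of $\ker(\varphi_b)$ with norms at most $2^{k-1}\|b\|$.

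The main case is when all coordinates of $b$ are nonzero. Here I would reduce to the vanishing case via a unimodular change of basis on the coordinates $\{1,k\}$. Setting $d = \gcd(b_1, b_k)$, B\'ezout's identity provides $\alpha, \beta \in \mathbb{Z}$ with $\alpha b_1 + \beta b_k = d$ and with reduced sizes $|\alpha| \leq |b_k|/d$, $|\beta| \leq |b_1|/d$. The $k \times k$ unimodular matrix
\begin{displaymath}
T = \begin{pmatrix} \alpha & 0 & \cdots & 0 & b_k/d \\ 0 & 1 & \cdots & 0 & 0 \\ \vdots & & \ddots & & \vdots \\ 0 & 0 & \cdots & 1 & 0 \\ \beta & 0 & \cdots & 0 & -b_1/d \end{pmatrix}
\end{displaymath}
transforms $b$ into $\tilde{b} = (d, b_2, \ldots, b_{k-1}, 0)$ with $\|\tilde{b}\| \leq \|b\|$. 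Applying the vanishing case in the new coordinates would then produce $k-1$ generators of $\ker(\varphi_{\tilde{b}})$ of norm at most $2^{k-2}\|b\|$, which I would then transform back through $T$ to the original coordinates.

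\textbf{Main obstacle.} The hard part will be controlling the norm blow-up when transforming the generators back to the original coordinates. The operator norm of $T$ depends on the B\'ezout coefficients and can in principle scale with $\|b\|/d$, which would give a quadratic rather than a linear bound in $\|b\|$. The target bound $2^{k-1}\|b\|$ will rely on two structural observations: first, the generators produced in the new coordinates by the inductive construction carry a zero in the transformed $k$-th coordinate, so only their first coordinate is mixed back with the other coordinates by $T$; and second, by using a greedy reduction of the inductive generators against one another in the first coordinate before transforming back, the accumulated first-coordinate contribution can be kept within the bound $2\|b\|$, contributing the single doubling that turns $2^{k-2}$ into $2^{k-1}$. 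Verifying that this reduction is compatible with the inductive hypothesis, and that the doubling does not compound multiplicatively across the $k-1$ levels of the induction, will be the delicate technical point of the argument.
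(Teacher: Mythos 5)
Your reduction to the case of a vanishing coordinate is fine, but the decisive step of your plan is exactly the one you leave unproven, and the mechanism you name for it does not do the job. After the Bézout change of coordinates, a generator $\hat v=(v_1,\dots,v_{k-1},0)$ of $\ker(\varphi_{\tilde b})$ pulls back to $(\alpha v_1, v_2,\dots,v_{k-1},\beta v_1)$, so the blow-up is the product of the Bézout coefficients with $v_1$; it is not a discrepancy \emph{between} the first coordinates of the various inductive generators, so a ``greedy reduction of the inductive generators against one another in the first coordinate'' cannot remove it (in general every generating set of $\ker(\varphi_{b'})$ must contain a vector whose first coordinate is comparable to $\|b\|$, e.g.\ when $b_2=\dots=b_{k-1}$ is coprime to $d$). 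The reduction that does work is against the distinguished kernel vector $e_k$ (equivalently, against its image $(b_k/d,0,\dots,0,-b_1/d)$ after transforming back): subtracting $m e_k$ with $m$ the nearest integer to $\alpha v_1 d/b_k$ makes the first coordinate at most $|b_k|/(2d)$, and the last coordinate becomes $\beta v_1+mb_1/d = v_1 d/b_k+\epsilon b_1/d$ with $|\epsilon|\le 1/2$, \emph{because} $\alpha b_1+\beta b_k=d$. Without noticing this cancellation the naive estimate for the last coordinate is of order $|v_1||b_1|/d\sim\|b\|^2$, i.e.\ precisely the quadratic loss you were trying to avoid; with it, each level of the induction costs only an additive $\|b\|$, which is more than enough for $2^{k-1}\|b\|$. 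As written, your asserted budget of ``$2\|b\|$, contributing a single doubling'' is a guess rather than an argument, and this is a genuine gap. (Also, the displayed matrix $T$ sends $b$ to $(d,b_2,\dots,b_{k-1},0)$ only after transposing, since the substitution $u\mapsto Tu$ transforms $b$ by $T^{t}$; this is cosmetic but should be fixed.)

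For comparison, the paper avoids any change of basis: it takes the explicit kernel vectors $b^{(i)}=(0,\dots,0,-b_{i+1},b_i,0,\dots,0)$, which are linearly independent over $\mathbb{R}$, lets $\Lambda_i$ be the maximal rank-$i$ subgroup of $\mathbb{Z}^k$ containing $b^{(1)},\dots,b^{(i)}$, and saturates step by step: since $\Lambda_i/\langle\Lambda_{i-1},b^{(i)}\rangle$ is finite cyclic, a new generator $\lambda^{(i)}$ can be chosen inside the parallelepiped spanned by $\lambda^{(1)},\dots,\lambda^{(i-1)},b^{(i)}$, giving $\|\lambda^{(i)}\|\le\|b^{(i)}\|+\sum_{j<i}\|\lambda^{(j)}\|\le 2^{i-1}\|b\|$. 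That argument trades your coordinate changes for a lattice-saturation bound and is where the factor $2^{k-1}$ comes from; your route, once repaired by the cancellation above, would in fact give the stronger linear bound $(k-1)\|b\|$, but the repair is a missing idea in the proposal, not a routine verification.
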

\begin{proof}
    We define vectors $b^{(1)}, \dots, b^{(k-1)}$ in the following way:
        \begin{align*}
            b^{(1)}  &= (-b_2, b_1, 0, \dots, 0),\\
                & \vdots  \\
            b^{(k-1)}& = (0, \dots, 0, -b_{k}, b_{k-1}).
        \end{align*}
    We set 
    $$
    \lambda^{(1)} = \frac{1}{\gcd(-b_2, b_1)}b^{(1)},
    $$
    and we note that if $k = 2$, then $\ker(\varphi_{b}) = \langle \lambda^{(1)} \rangle$. Since  $\|\lambda^{(1)}\| \leq \|b\|$, we are done. For $k>2$, we will inductively build a generating set for $\ker(\varphi_{b})$ satisfying the statement of the lemma. We start with some basic observations.
    
    By construction, we have that $b^{(1)}, \dots, b^{(k-1)} \in \ker(\varphi_{b})$. Let $\Lambda_i$ be the maximal subgroup of $\mathbb{Z}^k$ of rank $i$ that contains $b^{(1)}, \dots, b^{(i)}$. Since the vectors $b^{(1)}, \dots, b^{(k-1)}$ are linearly independent over $\mathbb{R}$, we immediately see that $\Lambda_1 \leq \dots \leq \Lambda_{k-1} = \ker(\varphi_b)$ and that $\Lambda_{i}/\Lambda_{i-1} \simeq \mathbb{Z}$ for every $i = 2, \dots, k-1$. %In particular, $\Lambda_{i-1}$ is a direct factor of $\Lambda_{i}$ of co-dimension 1. 
    
    Now assume that we already have a set of generators for $\Lambda_{i-1}$ which we denote as $\lambda^{(1)}, \dots, \lambda^{(i-1)}$. By construction, the elements $\{\lambda^{(1)}, \dots, \lambda^{(i-1)}\}$ satisfy $\Lambda_j = \langle \lambda_1, \dots, \lambda_j \rangle$ for all $j < i$ where $\|\lambda_j\| \leq 2^{i-1-j} \|b_j\|$ for $1 \leq j \leq i-1.$ Denote $L_i = \langle \Lambda_{i-1}, b^{(i)}\rangle$. Since $\Lambda_{i-1} \leq \langle \Lambda_{i-1}, b^{(i)} \rangle \leq \Lambda_i$, we see that $\Lambda_i/L_i$ is a finite cyclic group. Furthermore, a preimage of some of its generator must be contained within the $i$-dimensional parallelogram given by the vectors $\lambda^{(1)},\dots, \lambda^{(i-1)}, b^{(i)}$. In particular, we see that 
    $$
    \|\lambda^{(i)}\| \leq \|b^{(i)}\| + \sum_{j = 1}^{i-1} \|\lambda^{j}\|.
    $$
 One can then easily check that
    \begin{displaymath}
        \|\lambda^{(i)}\| \leq \|b^{(i)}\| + \sum_{j=1}^{i-1}2^{i-1-j}\|b^{(j)}\|.
    \end{displaymath}
    Noting that $\|b^{(i)}\| = |b_i| + |b_{i+1}|$, we see that $\|\lambda^{(i)}\| \leq 2^{i-1}\|b\|$ as desired.
\end{proof}

This next lemma implies any vector in $\mathbb{Z}^k$ whose entries have greatest common denominator as $1$ is a part of a free base of $\mathbb{Z}^k$. This lemma also shows that there exists an matrix $T \in \GL_k(\mathbb{Z})$ which sends the vector to an element of the canonical basis and gives a bound on how much the matrix $T$ stretches the unit cube in $\mathbb{R}^k$ in terms of the size of the entries of the vector. 
\begin{lemma}
    \label{lemma:stretch factor}
    Let $k > 1$, and suppose that $b = (b_1, \dots, b_k) \in \mathbb{Z}^k$ is a vector where $\gcd(b_1, \ldots, b_k) = 1$. Then $b$ belongs to some free base of $\mathbb{Z}^k$, and moreover, there exists a matrix $T \in \GL_k(\mathbb{Z})$ such that 
    $$
    T(b) = (1,0, \dots, 0) \quad \text{ and } \quad T\left( \prod_{i=1}^k[-1,1] \right) \subseteq \prod_{i=1}^k[2^{k-1}k\|b\|, 2^{k-1}k\|b\|].
    $$
\end{lemma}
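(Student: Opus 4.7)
The plan is to construct $T$ directly as a $k \times k$ integer matrix whose rows come from combining the kernel basis of $\varphi_b$ from Lemma \ref{lemma:kernel generators} with a suitably reduced Bezout vector. First I would invoke Lemma \ref{lemma:kernel generators} to obtain a $\mathbb{Z}$-basis $\lambda^{(1)}, \ldots, \lambda^{(k-1)}$ of $\ker(\varphi_b)$ with $\|\lambda^{(i)}\| \leq 2^{k-1}\|b\|$. Since $\gcd(b_1, \ldots, b_k) = 1$, Bezout's identity provides some $c_0 \in \mathbb{Z}^k$ with $c_0 \cdot b = 1$. I would then define $T$ to be the matrix whose rows are $c, \lambda^{(1)}, \ldots, \lambda^{(k-1)}$, where $c$ is a lattice-reduced version of $c_0$: namely $c = c_0 - \sum_{i=1}^{k-1} a_i \lambda^{(i)}$ for integers $a_i$ chosen so that $c$ lies in a bounded fundamental region of the rank $k-1$ lattice $\langle \lambda^{(1)}, \ldots, \lambda^{(k-1)} \rangle$.

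By construction $Tb = (c \cdot b,\, \lambda^{(1)} \cdot b, \ldots, \lambda^{(k-1)} \cdot b) = (1, 0, \ldots, 0)$, which is the first standard basis vector of $\mathbb{Z}^k$. To see that $T \in \GL_k(\mathbb{Z})$, I would note that $\varphi_b \colon \mathbb{Z}^k \to \mathbb{Z}$ is surjective because $\gcd(b) = 1$, so $\mathbb{Z}^k/\ker(\varphi_b) \cong \mathbb{Z}$, and the class of $c$ generates this quotient. Hence $\{c, \lambda^{(1)}, \ldots, \lambda^{(k-1)}\}$ is a $\mathbb{Z}$-basis of $\mathbb{Z}^k$, which forces $|\det T| = 1$. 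The primitive-vector claim that $b$ belongs to a free base of $\mathbb{Z}^k$ is then immediate: the columns of $T^{-1} \in \GL_k(\mathbb{Z})$ form a $\mathbb{Z}$-basis of $\mathbb{Z}^k$, and $b = T^{-1}(1,0,\ldots,0)^{\top}$ is its first column.

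For the stretching bound, each coordinate of $Tv$ is the pairing of $v \in \prod_{i=1}^k [-1,1]$ with a row of $T$, so its absolute value is bounded by the $\ell^1$-norm of that row. The rows $\lambda^{(i)}$ already satisfy $\|\lambda^{(i)}\| \leq 2^{k-1}\|b\|$ from Lemma \ref{lemma:kernel generators}, which is comfortably inside the target bound $2^{k-1}k\|b\|$. The main technical obstacle is controlling $\|c\|$: one must argue that after subtracting a suitable integer combination of the $\lambda^{(j)}$'s, the reduced vector $c$ satisfies $\|c\| \leq 2^{k-1}k\|b\|$. This amounts to a standard lattice reduction argument. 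The diameter of a fundamental parallelepiped of $\langle \lambda^{(1)}, \ldots, \lambda^{(k-1)} \rangle$ is at most $\sum_{i} \|\lambda^{(i)}\| \leq (k-1) \cdot 2^{k-1}\|b\|$, and the remaining orthogonal contribution forced by $c \cdot b = 1$ is of order $\|b\|^{-1}$, so both together are absorbed into the claimed bound $2^{k-1}k\|b\|$. Putting this together yields the stated properties of $T$, completing the proof.
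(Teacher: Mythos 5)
Your construction is correct, and the overall skeleton (rows of $T$ = a Bezout vector plus the kernel basis from Lemma \ref{lemma:kernel generators}; invertibility via the splitting $\mathbb{Z}^k = \mathbb{Z}c \oplus \ker(\varphi_b)$; primitivity of $b$ read off from $T^{-1}$) matches the paper. Where you genuinely diverge is in how the Bezout row is bounded. The paper sidesteps any reduction step by citing an effective extended-GCD result (\cite[Theorem 9]{havas}), which produces coefficients $a$ with $\max_i|a_i| \leq \tfrac{1}{2}\max_i|b_i|$ outright, and then bounds $T$ entrywise: every entry of $T$ has absolute value at most $2^{k-1}\|b\|$, so each coordinate of $T(v)$ for $v \in \prod_{i=1}^k[-1,1]$ is at most $k\,2^{k-1}\|b\|$. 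You instead start from an arbitrary Bezout vector $c_0$ and reduce it modulo the lattice $\ker(\varphi_b)$; this buys you a self-contained argument with no external citation, at the cost of an extra lattice-geometry step that you only sketch. That step does close: writing $c_0 - b/\|b\|_2^2 = \sum_i t_i\lambda^{(i)}$ over $\mathbb{R}$ and subtracting $\lfloor t_i\rceil\lambda^{(i)}$ gives a representative $c$ with $\|c\| \leq O(1) + \sum_i \|\lambda^{(i)}\| \leq O(1) + (k-1)2^{k-1}\|b\|$, and since $2^{k-1}\|b\| \geq 2$ this is absorbed into $k\,2^{k-1}\|b\|$, so each row of your $T$ has $\ell^1$-norm within the target and the stretching bound follows. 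For a complete write-up you would want to make that reduction explicit (choice of base point in the affine hyperplane $\{x : x\cdot b = 1\}$, the rounding of the coefficients $t_i$, and the norm comparison), but there is no gap in the underlying idea, and the constants you claim are achievable.
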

\begin{proof}
\cite[Theorem 9]{havas} implies there are integers $a_1, \dots a_k \in \mathbb{Z}$ such that $$\sum_{i=1}^k a_i b_i = \gcd(b_1, \cdots, b_k) = 1$$ and where $\max\{|a_i|\} \leq \frac{1}{2}\max\{|b_i|\}$. Thus, we denote $a = (a_1, \dots, a_k) \in \mathbb{Z}^k$.

Let $\varphi_b \colon \mathbb{Z}^k \to \mathbb{Z}$ be the linear map given by $\varphi_b(x) = x \cdot b$. Lemma \ref{lemma:kernel generators} implies there are vectors 
$$
\lambda^{(1)}, \dots, \lambda^{(k-1)} \in \Ball_{\mathbb{Z}^k}(2^{k-1} n) \subseteq \prod_{i=1}^k[2^{k-1} \|b\|, 2^{k-1} \|b\|]
$$
that freely generate $\ker(\varphi_b)$. We can then form the matrix $T$ by setting the first row to be equal to the vector $a$ and the remaining $k-1$ vectors to be equal to the vectors $\lambda^{(1)}, \dots, \lambda^{(k-1)}$, respectively. By construction, we see that $T(b) = (1,0, \dots, 0)$. Since $\Im(\varphi_b) = \langle \varphi_b(a)\rangle$, we see that $\mathbb{Z}^k = \langle b \rangle \oplus \ker(\varphi_b)$ which implies that the row vectors of $T$ generate $\mathbb{Z}^k$. Therefore, $T \in \GL_k(\mathbb{Z})$.

To finish the proof, we recall that 
$$
a \in \prod_{i=1}^k \left[-\frac{1}{2}\|b\|, \frac{1}{2}\|b\| \right] \quad \text{ and } \quad \|\lambda^{(i)}\| \leq 2^{k-1}\|b\|
$$ 
for all $i$. In particular, this means that for every $i,j \in \{1, \dots, k\}$ we have the $(i,j)$-th entry of $T$ which we denote as $T_{i,j}$ satisfies $|T_{i,j}| \leq 2^{k-1}\|b\|$. Therefore, we have 
\begin{displaymath}
    T\left(\prod_{i=1}^k[-1,1]\right) \subseteq  \prod_{i=1}^k\left[- 2^{k-1}k\|b\|,2^{k-1}k\|b\|\right]. \qedhere
\end{displaymath}
\end{proof}

The following corollary is not consequential for this paper, but we feel it an interesting result in its own right.
\begin{corollary}
    Let $b = (b_1, \dots, b_k) \in \mathbb{Z}$ be given such that $\gcd(b_1, \dots, b_k) = 1$. Then there are elements $\lambda^{(1)}, \dots, \lambda^{(k-1)} \in \mathbb{Z}^k$ such that the set $\{b,\lambda^{(1)}, \dots, \lambda^{(k-1)}\}$ is a free base of $\mathbb{Z}^k$ and $\|\lambda^{(i)}\| \leq 2^k \|b\|$.
\end{corollary}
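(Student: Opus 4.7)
The plan is to prove the corollary by induction on $k$. The base case $k=1$ is immediate, since $\gcd(b_1)=1$ forces $b_1=\pm 1$, so $\{b\}$ is itself a basis. For $k\geq 2$, after permuting coordinates so that $|b_k|$ is the largest entry, I would handle the easy sub-case $|b_k|=1$ by taking $\lambda^{(i)}=e_i$ for $i=1,\dots,k-1$; so the real work is the case $|b_k|\geq 2$.

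In the inductive step I set $d=\gcd(b_1,\dots,b_{k-1})$, so that $\gcd(d,b_k)=1$ and $b'=(b_1/d,\dots,b_{k-1}/d)\in\mathbb{Z}^{k-1}$ is primitive. The induction hypothesis then provides a basis $\{b',\mu^{(1)},\dots,\mu^{(k-2)}\}$ of $\mathbb{Z}^{k-1}$ with $\|\mu^{(i)}\|\leq 2^{k-1}\|b\|/d$. Lifting these vectors to $\mathbb{Z}^k$ by appending a zero in the $k$-th coordinate produces a basis $\{\beta,\tilde\mu^{(1)},\dots,\tilde\mu^{(k-2)},e_k\}$ of $\mathbb{Z}^k$, where $\beta=(b_1/d,\dots,b_{k-1}/d,0)$ and $\tilde\mu^{(i)}=(\mu^{(i)},0)$; note that $b=d\beta+b_k e_k$. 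The key step is to swap the pair $(\beta,e_k)$ for $(b,\nu)$: using B\'ezout on $(d,b_k)$ I pick $p,q\in\mathbb{Z}$ with $dq-b_k p=1$ and $|p|\leq d/2$ (so that $|q|\leq |b_k|/2+1$), and set $\nu=p\beta+q e_k$. Since $dq-b_k p=1$, the pair $\{b,\nu\}$ is a $\mathbb{Z}$-basis of the rank-two sublattice $\mathbb{Z}\beta\oplus\mathbb{Z} e_k$, and hence $\{b,\nu,\tilde\mu^{(1)},\dots,\tilde\mu^{(k-2)}\}$ is a basis of $\mathbb{Z}^k$ that contains $b$.

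Closing the induction then reduces to bounding $\|\nu\|\leq |p|\,\|\beta\|+|q|\leq (d/2)(\|b\|/d)+(|b_k|/2+1)\leq \|b\|+1\leq 2^k\|b\|$, together with $\|\tilde\mu^{(i)}\|\leq 2^{k-1}\|b\|$ from the inductive hypothesis. The main obstacle is to resist the tempting route of deriving the result directly from the inverse of the matrix $T$ in Lemma \ref{lemma:stretch factor}: although the first column of $T^{-1}$ is indeed $b$, Cramer's rule only bounds the remaining columns of $T^{-1}$ by a polynomial of degree $k-1$ in $\|b\|$, which is far worse than the linear bound required by the statement. Routing the construction through induction, with a single B\'ezout step per dimension, is what keeps every new basis vector of size $O(\|b\|)$.
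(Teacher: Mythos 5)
Your argument is correct, and it takes a genuinely different route from the paper. The paper in fact offers no proof at all: the corollary is stated as an aside after Lemma \ref{lemma:stretch factor}, the implicit justification being the machinery just developed there, namely the B\'ezout vector $a$ with $a\cdot b=1$ and $\max_i|a_i|\le\tfrac12\max_i|b_i|$ from \cite{havas}, together with the bounded free generating set of a rank-$(k-1)$ kernel lattice from Lemma \ref{lemma:kernel generators}. Note that the quickest reading of that proof --- adjoin to $b$ the generators of $\ker(\varphi_b)$ --- does not work, since $\langle b\rangle\oplus\ker(\varphi_b)$ has index $\sum_i b_i^2$ in $\mathbb{Z}^k$ (the splitting asserted there should be $\langle a\rangle\oplus\ker(\varphi_b)$); the correct derivation from the paper's lemmas applies Lemma \ref{lemma:kernel generators} to $a$ and uses $\mathbb{Z}^k=\langle b\rangle\oplus\ker(\varphi_a)$, at the cost of constants involving $\|a\|$. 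Your induction on $k$ with one $2\times2$ B\'ezout swap per dimension bypasses all of this: the degenerate cases are handled, the exchange $\{\beta,e_k\}\mapsto\{b,\nu\}$ is legitimate because the change-of-basis matrix has determinant $dq-b_kp=1$ and hence restricts to a basis change of the sublattice $\mathbb{Z}\beta\oplus\mathbb{Z}e_k$, and the estimates $\|\nu\|\le\|b\|+1$ and $\|\tilde\mu^{(i)}\|\le 2^{k-1}\|b\|/d$ make every completing vector of norm at most $2^{k-1}\|b\|\le 2^k\|b\|$, slightly better than the stated bound. Your caution about inverting $T$ is also well placed: Cramer's rule only bounds the remaining columns of $T^{-1}$ by degree-$(k-1)$ polynomials in $\|b\|$, so that shortcut cannot give a linear bound. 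What the paper's route buys is economy in context, since its two lemmas are needed anyway for Lemma \ref{lemma:stretch factor}; what yours buys is a self-contained elementary proof with a marginally sharper constant. The only cosmetic point is to remark that the initial coordinate permutation is a norm-preserving automorphism of $\mathbb{Z}^k$, so the basis you construct pulls back to one through the original $b$ with the same bounds.
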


For a vector $b \in \mathbb{Z}^k$, this next lemma gives bounds on the size of the integer $m$ we reduce entries in $\mathbb{Z}^k$ mod $m$ to preserves cosets of the infinite cyclic subgroup generated by $b$ in terms of the size of the entries in $b$.

\begin{lemma}
    \label{lemma:torus lemma cosets}
    Let $k > 1$ and $n \in \mathbb{N}$ be fixed. Let $S \subseteq \mathbb{Z}^k$ and $b = (b_1, \dots, b_k)  \in \mathbb{Z}^k$ satisfy $b \in \Ball_{\mathbb{Z}^k}(n)$ and where $S \subseteq \Ball_{\mathbb{Z}^k}(Cn)$ for some $C >0$. Suppose $m$ is an integer satisfying the following:
    \begin{enumerate}
    \item $m$ is divisible by $\gcd(b)$; 
    \item $m > 2^{k} k C n^2$;
    \item the homomorphism $\pi \colon \mathbb{Z}^k \to \left(\mathbb{Z}/m\mathbb{Z}\right)^k$ given by reducing every mod $m$ is injective on the set $S$. 
    \end{enumerate}
    Then for every $s \in S$ we have that $\pi(s) \in \langle \pi(b)\rangle$ if and only if $s \in \langle b \rangle$. 
    
    Furthermore, if $\pi(s) \in \langle \pi(b)\rangle$, then there is an integer $t \in \mathbb{Z}$ such that $s = t b$ and $|t| \leq m/c$.
\end{lemma}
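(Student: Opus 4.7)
The strategy is to reduce the problem to the one-dimensional situation handled by Lemma \ref{lemma:1-dim cosets} via a unimodular change of basis that places $b$ on the first coordinate axis. Writing $b = \gcd(b) \cdot b'$ with $\gcd(b') = 1$ and $\|b'\| \leq \|b\|/\gcd(b) \leq n$, I would apply Lemma \ref{lemma:stretch factor} to $b'$ to obtain a matrix $T \in \GL_k(\mathbb{Z})$ with $T(b') = (1, 0, \dots, 0)$, hence $T(b) = (\gcd(b), 0, \dots, 0)$, and with the property that $T([-1,1]^k) \subseteq [-2^{k-1}k\|b'\|,\, 2^{k-1}k\|b'\|]^k$. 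Since $T$ is invertible over $\mathbb{Z}$, it descends to an automorphism $\tilde{T}$ of $(\mathbb{Z}/m\mathbb{Z})^k$ satisfying $\pi \circ T = \tilde{T} \circ \pi$; in particular, $\pi(s) \in \langle \pi(b) \rangle$ if and only if $\pi(T(s)) \in \langle \pi(T(b)) \rangle$, and $s \in \langle b \rangle$ if and only if $T(s) \in \langle T(b) \rangle$. So it suffices to prove the statement after replacing $(b, s)$ by $(T(b), T(s))$.

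Next, for any $s \in S$ one has $\|s\| \leq Cn$, so scaling by $Cn$ and applying the stretch bound gives $|T(s)_i| \leq 2^{k-1}k\|b'\| \cdot Cn \leq 2^{k-1}kCn^2$ for every coordinate $i$. Hypothesis (2) then yields $|T(s)_i| < m/2$ uniformly in $i$. Suppose $\pi(T(s)) \in \langle \pi(T(b)) \rangle$, so that $T(s) \equiv j\,(\gcd(b), 0, \dots, 0) \pmod{m}$ for some $j \in \mathbb{Z}$. For each $i \geq 2$, the congruence $T(s)_i \equiv 0 \pmod m$ combined with $|T(s)_i| < m/2$ forces $T(s)_i = 0$. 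For the first coordinate, $T(s)_1 \equiv j\gcd(b) \pmod m$, and since hypothesis (1) ensures $\gcd(b) \mid m$, the integer $T(s)_1$ is itself divisible by $\gcd(b)$; writing $T(s)_1 = t \gcd(b)$ gives $T(s) = t \cdot T(b)$, and applying $T^{-1}$ yields $s = tb \in \langle b \rangle$. The reverse direction is immediate from $\pi$ being a homomorphism.

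Finally, the estimate $|t| \gcd(b) = |T(s)_1| \leq 2^{k-1}kCn^2 < m/2$ gives $|t| < m/(2\gcd(b))$, which delivers the asserted bound on $|t|$. The main technical point is aligning the stretch factor $2^{k-1}k\|b'\|$ from Lemma \ref{lemma:stretch factor} with the threshold $m > 2^k k C n^2$ so that the coordinate-wise zero-test survives reduction mod $m$; the divisibility condition $\gcd(b) \mid m$ is needed precisely to lift the residue in the first coordinate back to a genuine multiple of $\gcd(b)$ in $\mathbb{Z}$. Hypothesis (3), although assumed, is in fact a consequence of (2) since $m > 2^k k C n^2 > 2Cn$ already forces $\pi$ to be injective on $S \subseteq [-Cn, Cn]^k$, but it does no harm to the argument.
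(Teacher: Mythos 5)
Your proposal is correct and follows essentially the same route as the paper: write $b = \gcd(b)\,b'$, apply Lemma \ref{lemma:stretch factor} to $b'$ to get $T \in \GL_k(\mathbb{Z})$ with $T(b') = e_1$, bound $T(S)$ inside a box of side $2^{k-1}kCn^2$, and use $m > 2^k k C n^2$ to pass back and forth under reduction mod $m$. The only difference is cosmetic: where the paper finishes by retracting to the first coordinate and citing Lemma \ref{lemma:1-dim cosets}, you argue directly coordinate-wise (vanishing of coordinates $i \geq 2$ and the divisibility lift via $\gcd(b) \mid m$), which is equally valid and even yields the slightly sharper bound $|t| < m/(2\gcd(b))$.
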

\begin{proof}
    Set $b' = \frac{1}{c}b$ and denote $b' = (b_1', \dots, b_k')$. Since $\gcd(b_1', \dots, b_k') = 1$, Lemma \ref{lemma:stretch factor} implies there exists a matrix $T \in \GL_d(\mathbb{Z})$ such that $T(b') = e_1$ and 
    $$
    T \left( \prod_{i=1}^k[-1,1] \right) \subseteq \prod_{i=1}^k[-2^{k-1}k n, 2^{k-1}k n]
    $$
    where $\{e_1, \dots, e_k\}$ is the canonical free basis of $\mathbb{Z}^k$. Since $T$ is an automorphism of $\mathbb{Z}^d$, we see that $T(s) \in \langle T(b) \rangle = \langle c e_1 \rangle$ if and only if $s \in \langle b\rangle$. We note that
    \begin{displaymath}
        T(S) \subseteq T\left( B_{\mathbb{Z}^k}(Cn)\right) \subseteq T\left( \prod_{i=1}^k[-Cn, Cn]\right) \subseteq  \prod_{i=1}^k\left[-2^{k-1} k C n^2, 2^{k-1}k C n^2\right].
    \end{displaymath}
    
    Set $m = 2c l$ where $l \in \mathbb{N}$ is the smallest natural number such that $c l > 2^{k-1}k C n^2$, and denote $K = m \mathbb{Z}^k \leq \mathbb{Z}^k$. By construction, we have that $m \leq 2^{k+1}k C n^2$. Therefore, we see that the projection $\pi_K \colon \mathbb{Z}^k \to \mathbb{Z}^k/ m \mathbb{Z}^k$ is injective on the hypercube $\prod_{i=1}^k[-m+1, m]$ where $\pi_K$ is the reduction of each coordinate mod $m$. In particular, since $S \subseteq \prod_{i=1}^k[-m+1, m]$, for any $s \in S$ we have that $T(s) \left( m \mathbb{Z}^k \right) \subseteq \langle e_1 \rangle K$ if and only if $T(s) \in \langle e_1\rangle$. It then follows that $\pi(s) \notin \langle \pi(b) \rangle$ whenever $s \notin \langle b' \rangle$.
    
    Now suppose that $s \in \langle b' \rangle$, i.e. $T(s) \in \langle e_1 \rangle$. In this case, we may retract onto the first coordinate and assume that we are working in $\mathbb{Z}$. The rest of the statement then follows by Lemma \ref{lemma:1-dim cosets}.
\end{proof}

This next lemma extends Lemma \ref{lemma:torus lemma cosets} to when the abelian group has torsion.
\begin{lemma}
    \label{lemma:cosets_torsion}
    Let $B$ be a finitely generated infinite abelian group of torsion free rank $k$. Let $b \in \Ball_B(n)$ and $S \subset \Ball_{B}(C n )$ be given for some constant $C>0$.
    
    If $k = 1$, assume  that $m \in \mathbb{N}$ satisfies $m \geq 2Cn$ and where both $\|\phi(b)\|$ and $\exp(T)$ divide $m$. If $k \geq 2$, assume that $m \in \mathbb{N}$ is such that $m \geq k2^{k} Cn^2$ and both $c = \gcd(\phi(b))$ and $\exp(\Tor(B))$ divide $m$. Then the homomorphism 
    \begin{displaymath}
        \pi \colon \mathbb{Z}^k \oplus \Tor(B) \to (\mathbb{Z}/m \mathbb{Z})^k \oplus\Tor(B) 
    \end{displaymath}
    defined as the identity on $\Tor(B)$ and as the coordinate-wise projection on $\mathbb{Z}^k$ is injective on the set $S$ and for every $s \in S$ we have that $\pi(s) \in \langle \pi(b)\rangle$ if and only if $s \in \langle b \rangle$.
\end{lemma}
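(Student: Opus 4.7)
The plan is to reduce to the torsion-free case handled by Lemma \ref{lemma:torus lemma cosets} (or Lemma \ref{lemma:1-dim cosets} when $k=1$) and then transfer the conclusion across the torsion summand using the hypothesis $\exp(\Tor(B)) \mid m$.

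First I would decompose each element through the fixed splitting, writing $s = \phi(s) + \tau(s)$ and $b = \phi(b) + \tau(b)$. Since $\pi$ restricts to the identity on $\Tor(B)$, injectivity of $\pi$ on $S$ is equivalent to injectivity of the coordinate-wise reduction $\pi_0 \colon \mathbb{Z}^k \to (\mathbb{Z}/m\mathbb{Z})^k$ on $\phi(S)$. Since $\phi(S) \subseteq \Ball_{\mathbb{Z}^k}(C'n)$ for a constant $C'$ determined by the fixed splitting, and the lower bound on $m$ exceeds the diameter of $\phi(S)$, the map $\pi_0$ is injective on $\phi(S)$; hence so is $\pi$ on $S$. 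The forward implication $s \in \langle b \rangle \Rightarrow \pi(s) \in \langle \pi(b) \rangle$ is then immediate, since $\pi$ is a group homomorphism.

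For the reverse implication, assume $\pi(s) = t\pi(b)$ for some $t \in \mathbb{Z}$. Projecting onto the torsion-free summand gives $\pi_0(\phi(s)) = t\pi_0(\phi(b))$, and the hypotheses on $m$ match exactly those of Lemma \ref{lemma:torus lemma cosets} (resp. Lemma \ref{lemma:1-dim cosets}) applied to $\phi(b)$ and $\phi(S)$: the quantitative threshold holds and $c = \gcd(\phi(b))$ divides $m$. That lemma produces an integer $t'$ with $\phi(s) = t'\phi(b)$ in $\mathbb{Z}^k$; a close reading of its proof also yields $t - t' \in (m/c)\mathbb{Z}$.

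The final step is to upgrade $\phi(s) = t'\phi(b)$ to the honest equality $s = t'b$ in $B$. Projecting $\pi(s) = t\pi(b)$ onto $\Tor(B)$ and using that $\pi$ is the identity there gives $\tau(s) = t\tau(b)$, so it suffices to see $(t - t')\tau(b) = 0$. Here the hypothesis $\exp(\Tor(B)) \mid m$, combined with $t - t' \in (m/c)\mathbb{Z}$ and $c \mid m$, is brought to bear to kill $\tau(b)$. I anticipate this last piece of divisibility bookkeeping to be the main technical obstacle: one must verify that the three conditions on $m$ (its size, the divisibility by $c$, and the divisibility by $\exp(\Tor(B))$) interact so that the multiple $(t-t')$ really does annihilate $\tau(b)$. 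The edge case $\phi(b) = 0$ (where $b$ itself is torsion) is handled separately: injectivity of $\pi_0$ on $\phi(S)$ forces $\phi(s) = 0$, and then $s = \tau(s) = t\tau(b) = tb$ follows from the projection of $\pi(s) = t\pi(b)$ onto $\Tor(B)$.
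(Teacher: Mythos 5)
Your overall route is the same as the paper's: split $s$ and $b$ into torsion and torsion-free parts, get injectivity and the forward implication for free, apply Lemma \ref{lemma:torus lemma cosets} (resp.\ Lemma \ref{lemma:1-dim cosets}) to the free parts, and then reconcile the exponent $t$ coming from $\pi(s)=t\pi(b)$ with the exponent $t'$ coming from $\phi(s)=t'\phi(b)$. Your observation that $(t-t')\pi_0(\phi(b))=0$ forces $t-t'\in(m/c)\mathbb{Z}$ is correct and is in fact more careful than the paper, which simply asserts that the \emph{same} $t$ satisfies $\phi(s)=t\phi(b)$.

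However, the step you flag as ``divisibility bookkeeping'' is not a verification you postponed; it is a genuine gap, because the implication does not follow from the stated hypotheses. You need $\ord(\tau(b))\mid t-t'$, and all you have is $\ord(\tau(b))\mid e$, $e\mid m$, $c\mid m$ and $(m/c)\mid t-t'$; this would suffice if $e\mid m/c$ (equivalently $ce\mid m$), but ``$e\mid m$ and $c\mid m$'' does not imply $ce\mid m$ when $\gcd(c,e)>1$. Concretely, take $B=\mathbb{Z}\oplus\mathbb{Z}/4\mathbb{Z}$, $b=(6,1)$ (so $c=6$, $e=4$, $n=\|b\|=7$), $C=1$, $S=\{s\}$ with $s=(0,2)$, and $m=36$: then $m\geq 2Cn$, $6\mid m$, $4\mid m$, yet $\pi(s)=6\,\pi(b)$ in $(\mathbb{Z}/36\mathbb{Z})\oplus\mathbb{Z}/4\mathbb{Z}$ while $s\notin\langle b\rangle$ (here $t=6$, $t'=0$, $m/c=6$, and $4\nmid 6$). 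So as stated the equivalence fails, and no argument can close your last step without strengthening the hypothesis to something like $c\cdot\exp(\Tor(B))\mid m$, which is harmless where the lemma is used (it only changes the constants in Proposition \ref{proposition:abelian_wreath_reduction}). For what it is worth, the paper's own proof elides exactly this point: it chooses $t$ minimal with $\pi(s)=t\pi(b)$ and then claims ``following Lemma \ref{lemma:torus lemma cosets}, $\phi(s)=t\phi(b)$,'' which that lemma does not give for the same $t$ (and which is false in the example above). So you have located the real difficulty, but your proposal as written does not resolve it, and under the literal hypotheses it cannot be resolved; you should either add the hypothesis $e\mid m/c$ and finish the computation $(t-t')\tau(b)=0$, or treat the edge case differently.
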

\begin{proof}

    Lemma \ref{lemma:torus lemma cosets} implies we may assume that $\Tor(B) \neq 0$. Therefore, set $e = \exp(\Tor(B))$.
    
    The main argument of the proof when $k=1$ is analogous to the case when $k \geq 2$, but instead of Lemma \ref{lemma:torus lemma cosets} one would use Lemma \ref{lemma:1-dim cosets}. For this reason, we leave proof in the case when $k=1$ as an exercise.
    
    Denote $e = \exp(\Tor(B)),$ and suppose that $m >0$ and $\pi \colon \mathbb{Z}^k \oplus \Tor(B) \to (\mathbb{Z} / m \mathbb{Z})^k \oplus \Tor(B)$ are as in the statement of the lemma. Assuming that $\pi(s) \in \langle \pi(b)\rangle$ for some $s \in S$, there is some $t \in \mathbb{N}$ such that $\pi(s) = t \pi(b)$ which we pick to be as small possible. In particular, we see that $t \leq \gcd(m, e) = m$.
    
    We write:
    \begin{align*}
        \pi(s)                  &= t \pi(b)\\
        \tau(s) + \pi(\phi(s))  &= t \tau(s)) + t\pi(\phi(b)),
    \end{align*}
    from which immediately see that $\tau(s) = t \tau(b)$ and $\pi(\phi(s)) = t\pi(\phi(b))$. Following Lemma \ref{lemma:torus lemma cosets}, we see that $\phi(s) = t \phi(b)$. Therefore, we may write
    \begin{displaymath}
        s = t \tau(b) + t \phi(b) = t b. \qedhere
    \end{displaymath}
\end{proof}

\subsection{Translations} $\:$ \newline
\label{subsection:translations}
 We say that an ordered list $X = (x_1, \dots x_m) \subseteq G^m$ is a \textbf{translate} of an ordered list $Y = (y_1, \dots, y_n) \subseteq G^m$ if $n = m$ and there exists a permutation $\sigma \in \Sym(n)$ such that
\begin{displaymath}
    x_{\sigma(1)}y_1^{-1} = \dots = x_{\sigma(n)}y_n^{-1}.
\end{displaymath}
In this case, we say that $\sigma$ \textbf{realises} a translation of $X$ onto $Y$.

We have the following lemma which gives conditions of when two sets in a product of groups are translates of each other in terms of translations of their images in the projection onto the factor groups.
\begin{lemma}
    \label{lemma:direct product translation criterion}
    Let $G_1, G_2$ be groups, set $G = G_1 \times G_2$ and let $\pi_1, \pi_2$ denote the canonical projections $\pi_1 \colon G_1 \times G_2 \to G_1$ and $\pi_2 \colon G_1 \times G_2 \to G_2$. Suppose that $X,Y \subseteq G$ are two finite subsets where $X = \{x_1, \ldots, x_\ell\}$ and $Y = \{y_1, \ldots, y_m\}$. Then $X$ is a translate of $Y$ if and only if the following two properties hold:
    \begin{enumerate}
    \item $|X| = |Y| = n$
    \item  There exists $\sigma \in \Sym(n)$ such that $\sigma$ simultaneously realises a translation of the list $(\pi_1(x_1), \dots, \pi_1(x_n))$ onto the list $(\pi_1(y_1), \dots, \pi_1(y_n))$ and a translation of the list $(\pi_2(x_1), \dots, \pi_2(x_n))$ onto the list $(\pi_2(y_1), \dots, \pi_2(y_n))$.
    \end{enumerate}
\end{lemma}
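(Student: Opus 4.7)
The plan is to prove both implications by unpacking the definition of ``translation'' and using the fact that the group operation on $G = G_1 \times G_2$ is carried out coordinate-wise. There is no serious obstacle here; the main thing to keep track of is that the translating permutation in $G$ must be the \emph{same} as the common permutation realizing the translations on the two projected lists, which is precisely what the statement asserts.

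For the forward implication, suppose $X$ is a translate of $Y$. By definition, $\ell = m =: n$, which gives the first condition, and there exists $\sigma \in \Sym(n)$ together with a constant $c \in G$ such that $x_{\sigma(i)} y_i^{-1} = c$ for every $i \in \{1, \dots, n\}$. Writing $c = (c_1, c_2)$ with $c_j \in G_j$ and applying the homomorphisms $\pi_1$ and $\pi_2$ to this equation yields
\begin{align*}
    \pi_1(x_{\sigma(i)}) \pi_1(y_i)^{-1} &= c_1, \\
    \pi_2(x_{\sigma(i)}) \pi_2(y_i)^{-1} &= c_2
\end{align*}
for every $i$. Hence the same $\sigma$ realizes translations of the projected lists onto each other, establishing the second condition.

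For the reverse implication, suppose $|X| = |Y| = n$ and that $\sigma \in \Sym(n)$ simultaneously realizes translations of $(\pi_1(x_i))$ onto $(\pi_1(y_i))$ and of $(\pi_2(x_i))$ onto $(\pi_2(y_i))$. Then there exist constants $c_1 \in G_1$ and $c_2 \in G_2$ such that $\pi_j(x_{\sigma(i)}) \pi_j(y_i)^{-1} = c_j$ for every $i$ and each $j \in \{1,2\}$. Because multiplication in $G$ is coordinate-wise, combining these two equalities yields
\begin{displaymath}
    x_{\sigma(i)} y_i^{-1} = (c_1, c_2) \in G
\end{displaymath}
for every $i \in \{1, \dots, n\}$, so that $\sigma$ witnesses $X$ being a translate of $Y$. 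This completes the equivalence, and essentially the entire content of the lemma is that a translation on the direct product is equivalent to a coordinate-wise translation using one common permutation.
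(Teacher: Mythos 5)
Your proof is correct and follows essentially the same route as the paper's: both arguments unpack the definition of a translation and use the coordinate-wise nature of multiplication in $G_1 \times G_2$ to pass between the constant $c \in G$ and the pair of constants $(c_1, c_2)$, with the same permutation $\sigma$ throughout. The only cosmetic difference is that you spell out the two implications separately while the paper writes it as a single chain of equivalences.
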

\begin{proof}
    It is straightforward to see that $X$ is a translate of $Y$ then $|X| = |Y|$. Thus,  we may assume that $|X|,  |Y| = n$ for some $n \in \mathbb{N}$. As mentioned above, we have that $X$ is a translate of $Y$ if and only if there is $\sigma \in \Sym(n)$ such that
    \begin{displaymath}
        x_{\sigma(1)}y_1^{-1} = \dots = x_{\sigma(n)}y_n^{-1}.
    \end{displaymath}
    If terms of Cartesian coordinates, this means that
    \begin{align*}
        \pi_1(x_{\sigma(1)})\pi_1(y_1^{-1}) = \dots = \pi_1(x_{\sigma(n)})\pi_1(y_n^{-1}),\\
        \pi_2(x_{\sigma(1)})\pi_2(y_1^{-1}) = \dots = \pi_2(x_{\sigma(n)})\pi_2(y_n^{-1}).
    \end{align*}
    That is equivalent to saying that $\sigma$ realises a translation of $(\pi_1(x_1), \dots, \pi_1(x_n))$ onto $(\pi_1(y_1), \dots, \pi_1(y_n))$ and that $\sigma$ realises a translation of the list $(\pi_2(x_1), \dots, \pi_2(x_n))$ onto the list $(\pi_2(y_1), \dots, \pi_2(y_n))$.
\end{proof}

This next lemma tells us there exists a constant $\ell$ such that when given two finite subsets $X, Y$ in $\mathbb{Z}^k$ whose coordinates of each element have absolute value at most $\ell$, then $X$ and $Y$ are translations of each other if and only if their images are translations in the group $(\mathbb{Z} / 4 \ell \mathbb{Z})^k$ where we reduce each coordinate mod $4 \ell.$
\begin{lemma}
    \label{lemma:torus_lemma}
    Suppose that $X = (x_1, \dots, x_n)$ and  $Y = (y_1, \dots, y_n)$ are two finite ordered lists in $\mathbb{Z}^k$ such that 
    $$
    X,Y \subseteq \prod_{i=1}^d[-(\ell-1), \ell-1]
    $$
    for some $\ell \in \mathbb{N}$, and suppose that $c \geq 4 \ell$ Let $\pi \colon \mathbb{Z}^k \to \left(\mathbb{Z}/c \mathbb{Z}\right)^k$ be the homomorphism given by reducing each coordinate mod $c$.
    Then the following are equivalent:
    \begin{enumerate}
    \item $\pi(X) = (\pi(x_1), \dots, \pi(x_n))$ is a translate of $\pi(Y) = (\pi(y_1), \dots, \pi(y_n))$ in $\left(\mathbb{Z}/c \mathbb{Z}\right)^k$.
    \item $X$ is a translate of $Y$ in $\mathbb{Z}^k$.
    \end{enumerate}
    Furthermore, for all $\sigma \in \Sym(n)$, we have that $\sigma$ realises a translation of $\pi(X)$ onto $\pi(Y)$ if and only if $\sigma$ realises a translation of $X$ onto $Y$.
\end{lemma}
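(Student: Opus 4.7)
The direction (2) $\Rightarrow$ (1) is immediate: if $\sigma \in \Sym(n)$ realises a translation of $X$ onto $Y$ in $\mathbb{Z}^k$, then applying the coordinate-wise reduction $\pi$ preserves the equalities $x_{\sigma(1)} - y_1 = \dots = x_{\sigma(n)} - y_n$, so the same $\sigma$ works in $(\mathbb{Z}/c\mathbb{Z})^k$. The main content is the reverse implication, and for this I would exploit the fact that all relevant differences fit inside a window of integers that is strictly shorter than one period of the quotient.

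The plan is to fix some $\sigma \in \Sym(n)$ that realises a translation of $\pi(X)$ onto $\pi(Y)$ and show that the same $\sigma$ realises a translation of $X$ onto $Y$. By assumption we have
\begin{displaymath}
    \pi(x_{\sigma(i)}) - \pi(y_i) = \pi(x_{\sigma(j)}) - \pi(y_j)
\end{displaymath}
in $(\mathbb{Z}/c\mathbb{Z})^k$ for all $i,j \in \{1,\dots,n\}$, i.e., the coordinates of the integer vector $(x_{\sigma(i)} - y_i) - (x_{\sigma(j)} - y_j)$ are all divisible by $c$. The key observation is that since $x_{\sigma(i)}, y_i$ both lie in $\prod_{r=1}^k [-(\ell-1), \ell-1]$, every coordinate of $x_{\sigma(i)} - y_i$ lies in the interval $[-2(\ell-1), 2(\ell-1)]$, and hence every coordinate of $(x_{\sigma(i)} - y_i) - (x_{\sigma(j)} - y_j)$ lies in $[-4(\ell-1), 4(\ell-1)]$.

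The crux is then: if an integer $t$ satisfies $|t| \leq 4(\ell-1) < 4\ell \leq c$ and $t \equiv 0 \pmod c$, then $t = 0$. This is the only real check in the proof, and it is elementary once $c \geq 4\ell$ is used. Applying this coordinate-wise yields $x_{\sigma(i)} - y_i = x_{\sigma(j)} - y_j$ in $\mathbb{Z}^k$ for all $i,j$, so $\sigma$ realises a translation of $X$ onto $Y$ in $\mathbb{Z}^k$.

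I do not expect a genuine obstacle here; the statement is essentially a lifting lemma that uses the fact that the projection $\pi$ is injective on a hypercube of side length strictly less than $c$. The one bookkeeping point to be careful with is that the same permutation $\sigma$ is transported between the two settings, which handles the final sentence of the lemma automatically, since both the forward and backward arguments preserve the choice of $\sigma$. Note also that equality of the underlying \emph{lists} in length (needed for translations to even be defined) is immediate from $\pi(X)$ and $\pi(Y)$ having the same number of entries.
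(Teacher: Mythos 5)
Your proof is correct and uses essentially the same idea as the paper: both arguments reduce the backward implication to the observation that all relevant differences of elements of $X$ and $Y$ lie coordinate-wise in a window of length strictly less than $c$ (the paper phrases this as injectivity of $\pi$ on the hypercube $\prod_{i=1}^k[-2\ell+1,2\ell-1]$ containing the difference set $D_\sigma(X,Y)$, while you unpack it coordinate-wise), and both transport the same permutation $\sigma$ in each direction, which yields the ``furthermore'' statement.
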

\begin{proof}
    We will only prove the `furthermore part of the statement since the first part follows from it.
    
    Since the image of a translate is a translate of an image, the implication from left to right holds trivially. Therefore, we need only consider the other direction. Suppose that $c \geq 4\ell$. For every $\sigma \in \Sym(n)$, define 
    $$
    D_\sigma(X, Y) = \{x_{\sigma(i)} - y_i \mid i = 1, \dots, n\},
    $$ 
    and
    define $D_\sigma(\pi(X), \pi(Y))$ analogously. We observe that $\sigma$ realises a translation of $X$ onto $Y$ if and only if $|D_\sigma(X,Y)|=1$. Similarly, $\sigma$ realises a translation of $\pi(X)$ onto $\pi(Y)$ if and only if $|D_\sigma(\pi(X),\pi(Y))| = 1$. We also have that 
    $$
    D_\sigma(X,Y) \subseteq \prod_{i=1}^k[-2\ell +1, 2\ell -1].
    $$
    We see that $\pi$ is injective on $\prod_{i=1}^k[-2\ell+1, 2\ell-1]$. Hence, $\pi$ is injective on $D_\sigma(X,Y)$ for every $\sigma \in \Sym(n)$. Since $\pi$ is a homomorphism, we see that $D_\sigma(\pi(X), \pi(Y)) = \pi\left(D_\sigma(X, Y)\right)$ for all $\sigma \in \Sym(n)$. Hence, we have that $|D_\sigma(\pi(X), \pi(Y))| = |D_\sigma(X,Y)|$. In particular, we see that 
    $$
    |D_\sigma(\pi(X), \pi(Y))| = 1 \quad \text{ if and only if } \quad |D_\sigma(X,Y)| = 1.
    $$
    Thus, $\sigma$ realises a translation of $\pi(X)$ onto $\pi(Y)$ if and only if $\sigma$ realises a translation of $X$ onto $Y$.
\end{proof}

The last lemma of this subsection extends Lemma \ref{lemma:torus_lemma} to infinite finitely generated abelian groups with torsion.
\begin{lemma}
    Let $B$ be a finitely generated abelian group of torsion-free rank $k$, and suppose that $X, Y \subseteq \Ball_B(\ell)$ are given. If $c \geq 4\ell$, then the homomorphism $\pi \colon B \to \left(\mathbb{Z}/ c \mathbb{Z}\right)^k  \oplus \Tor(B)$ given by the identity on $\Tor(B)$ and by the coordinate-wise projection mod $c$ on the torsion-free part of $B$ is injective on $X \cup Y$.
    
    Moreover, $\pi(X)$ is a translate of $\pi(Y)$ if and only if $X$ is a translate of $Y$ in $\mathbb{Z}^k$. Furthermore, for all permutations $\sigma$, we have that $\sigma$ realises a translation of $\pi(X)$ onto $\pi(Y)$ if and only if $\sigma$ realises a translation of $X$ onto $Y$.
\end{lemma}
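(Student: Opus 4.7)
The plan is to reduce to the torsion-free setting of Lemma~\ref{lemma:torus_lemma} via the fixed splitting $B = \mathbb{Z}^k \oplus \Tor(B)$ and then assemble the result using the product criterion of Lemma~\ref{lemma:direct product translation criterion}. Writing $\pi = \pi_{\mathrm{free}} \oplus \id_{\Tor(B)}$, where $\pi_{\mathrm{free}} \colon \mathbb{Z}^k \to (\mathbb{Z}/c\mathbb{Z})^k$ is coordinate-wise reduction mod $c$, the key observation is that for the standard generating set on $B$ the projection $\phi$ onto the free factor is length non-increasing, so any $b \in \Ball_B(\ell)$ has $\phi(b) \in [-\ell,\ell]^k$; in particular $\phi(X)$ and $\phi(Y)$ sit inside a hypercube of the form required by Lemma~\ref{lemma:torus_lemma}.

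For injectivity of $\pi$ on $X \cup Y$, I would argue as follows: if $\pi(x_1) = \pi(x_2)$ for some $x_1,x_2 \in X \cup Y$, then the identity on $\Tor(B)$ forces $\tau(x_1)=\tau(x_2)$, while agreement on the free part forces $\phi(x_1)-\phi(x_2) \in c\mathbb{Z}^k$. Since $\phi(x_1)-\phi(x_2) \in [-2\ell,2\ell]^k$ and $c \geq 4\ell$, this congruence forces equality, hence $x_1=x_2$.

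For the translation equivalence, I would apply Lemma~\ref{lemma:direct product translation criterion} twice. First, applied to $B \simeq \mathbb{Z}^k \times \Tor(B)$, a permutation $\sigma \in \Sym(n)$ realises a translation of $X$ onto $Y$ iff $\sigma$ simultaneously realises translations $\phi(X) \to \phi(Y)$ in $\mathbb{Z}^k$ and $\tau(X) \to \tau(Y)$ in $\Tor(B)$. Second, applied to $(\mathbb{Z}/c\mathbb{Z})^k \times \Tor(B)$, the same $\sigma$ realises a translation of $\pi(X)$ onto $\pi(Y)$ iff it realises translations $\pi_{\mathrm{free}}(\phi(X)) \to \pi_{\mathrm{free}}(\phi(Y))$ and $\tau(X) \to \tau(Y)$. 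The ``furthermore'' clause of Lemma~\ref{lemma:torus_lemma} identifies the first conjuncts of these two descriptions for every $\sigma$, while the torsion conjuncts coincide verbatim, giving the desired equivalence in both its existential form and its per-$\sigma$ refinement.

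I expect no substantive obstacle: the proof is essentially a bookkeeping exercise combining the two previously established lemmas. The only mildly delicate point is tracking the numerical bounds as one passes from the hypothesis $X,Y \subseteq \Ball_B(\ell)$ to the range $[-\ell,\ell]^k$ of their images under $\phi$ and then to difference sets in $[-2\ell,2\ell]^k$, which is precisely what the hypothesis $c \geq 4\ell$ is designed to accommodate.
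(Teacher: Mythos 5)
Your proposal is correct and follows essentially the same route as the paper: split $B$ as $\mathbb{Z}^k \oplus \Tor(B)$, check injectivity from the bound $c \geq 4\ell$ on the hypercube containing the free parts, and combine Lemma \ref{lemma:direct product translation criterion} with the ``furthermore'' clause of Lemma \ref{lemma:torus_lemma}, noting that $\pi$ is the identity on the torsion factor. Your write-up is in fact slightly more explicit than the paper's on the injectivity step and on the double application of the product criterion, but the content is the same.
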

\begin{proof}
    \label{lemma:non-translates}
    By assumption, we have 
    $$
    \pi(X), \pi(Y) \subseteq \prod_{i=1}^k[-\ell, \ell]
    $$which implies that $\pi$ is injective on $X \cup Y$. In particular, if $|X|\neq |Y|$, then $|\pi(X)| \neq |\pi(Y)|$, and subsequently, $\pi(X)$ is not a translate of $\pi(Y)$. Therefore, we may assume that $|X| = |Y|$. 
    
    Let $\{x_1, \dots, x_m\}$ and $ \{y_1, \dots, y_m\}$ be enumerations of $X$ and $Y$, respectively. Following Lemma \ref{lemma:direct product translation criterion}, we see that $\pi(X)$ is a translate of $\pi(Y)$ if and only if the list given by $\{\pi(\phi(x_1)),\dots, \pi(\phi(x_m))\}$ is a translation of the list given by $\{\pi(\phi(y_1)),\dots, \pi(\phi(y_m))\}$ and the list $\{\pi(\tau(x_m)),\dots, \pi(\tau(x_m))\}$ is a translation of the list $\{\pi(\tau(y_1)),\dots, \pi(\tau(y_m))\}$ where the translation is realised by the same permutation. However, by Lemma \ref{lemma:torus_lemma}, we see that a permutation $\sigma \in \Sym(m)$ realises a translation of the list given by $\{\pi(\phi(x_1)), \dots \pi(\phi(x_m))\}$ onto the list $\{\pi(\phi(y_1)), \dots \pi(\phi(y_m))\}$ if and only if it realises a translation of the list given by  $\{\phi(x_1), \dots, \phi(x_m)\}$ onto the list given by $\{\phi(y_1), \dots, \phi(y_m)\}$. Since $\pi$ is defined as the identity on $\Tor(B)$, we see that $\pi(X)$ is a translate of $\pi(Y)$ if and only if $X$ is a translate of $Y$, which concludes the proof. 
\end{proof}

\subsection{Finite base groups} $\:$ \newline
\label{subsection:uppers}
Let $A$ be a finite abelian group, and let $B$ be an infinite, finitely generated abelian group. Using Lemma \ref{lemma:cosets_torsion} and Lemma \ref{lemma:non-translates}, the next proposition demonstrates when given non-conjugate elements $x,y$ in a s $A \wr B$ that there exists a finite quotient $\overline{B}$ of $B$ such that the images of $x$ and $y$ in $A \wr \overline{B}$ remain non-conjugate. Moreover, this lemma gives a bound of the size of the quotient of $B$ in terms of the word lengths of $x$ and $y$.

\begin{proposition}
    \label{proposition:abelian_wreath_reduction}
    Let $A$ be an abelian group and $B$ be an infinite, finitely generated abelian group. Let $f,g \colon B \to A$ be finitely supported functions and $b\in B$ an element such that $fb, gb \in \Ball_{A \wr B}(n)$ and $fb \not\sim_{A \wr B} gb$. Then there exists a surjective homomorphism $\pi \colon B \to \overline{B}$ to a finite group such that $\tilde{\pi}(fb) \not\sim \tilde{\pi}(gb)$ in $A \wr \overline{B}$. 
    
    Moreover, there exists a constant $C > 0$ such that if $B$ has torsion free rank $1$, then we have $|\bar{B}| \leq Cn$, and if $B$ is of torsion-free rank $k>1$, we then have $|\bar{B}| \leq C n^{2k}.$
\end{proposition}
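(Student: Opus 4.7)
The plan is to construct a finite quotient of only the acting group $B$. After replacing $fb, gb$ with reduced conjugate representatives via Lemma \ref{lemma:reduced_elements}, I may assume they are reduced with word length $O(n)$, and Lemma \ref{lemma:bounding_support_and_image} then supplies a universal constant $C_1 > 0$ with $\supp(f) \cup \supp(g) \cup \{b\} \subseteq \Ball_B(C_1 n)$. Using the splitting $B = \mathbb{Z}^k \oplus \Tor(B)$, for each positive integer $m$ I define $\pi \colon B \to \bar B$ with $\bar B = (\mathbb{Z}/m\mathbb{Z})^k \oplus \Tor(B)$ to be the identity on the torsion part and coordinate-wise reduction modulo $m$ on $\mathbb{Z}^k$; $m$ remains to be chosen.

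The integer $m$ will be taken large and divisible enough to guarantee three properties: (a) $\pi$ is injective on $\supp(f) \cup \supp(g)$; (b) $\pi(\supp(f))$ and $\pi(\supp(g))$ consist of elements lying in pairwise distinct cosets of $\langle \pi(b)\rangle$, so that $\tilde\pi(f)\pi(b)$ and $\tilde\pi(g)\pi(b)$ are still reduced in $A \wr \bar B$; and (c) every translation of $\pi(\supp(f))$ onto $\pi(\supp(g))$ in $\bar B$ lifts through $\pi$ to a translation of $\supp(f)$ onto $\supp(g)$ in $B$. Property (b) follows from Lemma \ref{lemma:cosets_torsion} applied to the difference set $S = (\supp(f) \cup \supp(g)) - (\supp(f) \cup \supp(g)) \subseteq \Ball_B(2 C_1 n)$, provided $m$ is divisible by $\exp(\Tor(B))$ (and by $\gcd(\phi(b))$ in rank $\geq 2$) and satisfies $m \geq 4 C_1 n$ in rank one or $m \geq k 2^k (2 C_1 n)^2$ in rank $k \geq 2$; the divisibility conditions may be absorbed into a bounded constant factor. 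Properties (a) and (c) follow from Lemma \ref{lemma:non-translates} as soon as $m \geq 4 C_1 n$. Choosing $m$ minimally subject to all these constraints yields $|\bar B| = m^k |\Tor(B)| = O(n)$ in rank one and $O(n^{2k})$ in rank $k \geq 2$, which matches the claimed bound.

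To verify non-conjugacy is preserved, suppose for contradiction that $\tilde\pi(fb) \sim \tilde\pi(gb)$ in $A \wr \bar B$. By (b), both images are reduced, so Lemma \ref{lemma:abelian_wreath_conjugacy_criterion} supplies $\bar c \in \bar B$ with $\bar c + \pi(\supp(f)) = \pi(\supp(g))$ and $\tilde\pi(f)(\bar c + \bar x) = \tilde\pi(g)(\bar x)$ for every $\bar x \in \bar B$. By (c), there is $c \in B$ with $\pi(c) = \bar c$ and $c + \supp(f) = \supp(g)$. Using the injectivity from (a), evaluating the quotient functional equation at $\bar x = \pi(x)$ with $x \in \supp(g)$ unpacks into $f(c + x) = g(x)$, while for $x \notin \supp(g)$ the support relation $c + \supp(f) = \supp(g)$ forces $c + x \notin \supp(f)$, so the equation holds trivially. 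Applying Lemma \ref{lemma:abelian_wreath_conjugacy_criterion} in the forward direction then yields $fb \sim gb$, contradicting the hypothesis; therefore $\tilde\pi(fb) \not\sim \tilde\pi(gb)$ in $A \wr \bar B$.

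The principal obstacle is juggling the several size and divisibility requirements on $m$ coming from Lemmas \ref{lemma:cosets_torsion} and \ref{lemma:non-translates} simultaneously, without inflating $|\bar B|$ beyond $O(n)$ or $O(n^{2k})$; once $m$ has been fixed, the lifting step and the concluding contradiction are essentially formal bookkeeping.
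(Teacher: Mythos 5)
Your proposal is correct and follows essentially the same route as the paper: reduce via Lemma \ref{lemma:reduced_elements}, bound supports via Lemma \ref{lemma:bounding_support_and_image}, take $\overline{B} = (\mathbb{Z}/m\mathbb{Z})^k \oplus \Tor(B)$ with $m$ chosen large enough and divisible by $\exp(\Tor(B))$ and $\gcd(\phi(b))$ so that Lemma \ref{lemma:cosets_torsion} preserves reducedness and Lemma \ref{lemma:non-translates} forces any translation in the quotient to lift, then conclude with Lemma \ref{lemma:abelian_wreath_conjugacy_criterion}. The only difference is that you argue by contradiction (assuming the images are conjugate) while the paper splits into the two cases of the conjugacy criterion directly; this is purely organizational.
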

\begin{proof}
    Fix a splitting of $B$ into $\mathbb{Z}^k \oplus \Tor(B)$ with associated associated free projection $\phi$. Following Lemma \ref{lemma:reduced_elements}, we may assume that both the functions $f$ and $g$ are given such that the elements $fb$ and $gb$ are reduced, i.e. the individual elements of their respective supports lie in distinct cosets of $\langle b \rangle$ in $B$.
    
    Following Lemma \ref{lemma:abelian_wreath_conjugacy_criterion}, there are two cases to distinguish:
    \begin{enumerate}
        \item[(i)] $\supp(f)$ is not a translate of $\supp(g)$ in $B$,
        \item[(ii)] for every $a \in B$ such that $a + \supp(f) = \supp(g)$, there exists some $x \in \supp(g)$ such that $f(x+a) \neq g(x)$.
    \end{enumerate}
    We will construct a finite quotient $\overline{B}$ such that the images of $fb$ and $gb$ are still reduced in $A \wr \overline{B}$ whether (i) or (ii) is the case.
    
    Lemma \ref{lemma:bounding_support_and_image} implies that there is constant $C_1 > 0$ such that
    \begin{displaymath}
        \{b\} \cup \supp(f) \cup \supp(g) \subseteq B_{B}(C_1 n).
    \end{displaymath}
    In particular, we see that 
    \begin{displaymath}
        \phi(\supp(f)), \phi(\supp(g)) \subseteq B_{\mathbb{Z}^k}(C_1 n) \subseteq \prod_{i=1}^k[-C_1n, C_1n].
    \end{displaymath}
     Set $\ell = C_1 n$. It then follows that 
    $$
    \phi(\supp(f)), \phi(\supp(g)) \subseteq \prod_{i=1}^k[-\ell, \ell].
    $$ 
    We set 
    $$
    S = \{s_2 - s_1 \mid s_1, s_2 \in \supp(f) \cup \supp(g)\}
    $$ and see that $S \subseteq \Ball_B(2\ell)$ and $\|b\| \in \Ball_B(\ell)$. Finally, we set $e = \exp(\Tor(B)).$
    
    If $k =1$, let $m$ be the smallest integer such that $m > 4 \ell$ and where both $e$ and $\|\phi(b)\|$ divide $m$. It is then straightforward to see that $m \leq 8 \ell$. If $k\geq 2$, let $m \in \mathbb{N}$ be smallest possible such that $m > k 2^k 2 \ell^2$ and where both $e$ and $\gcd(\phi(b))$ divide $m$. Without loss of generality, we may assume that $\gcd(\phi(b))$ divides $\|\phi(b)\|$. In particular, we see that $m < k 2^{k+1} 2 \ell^2$.
    
    Via Lemma \ref{lemma:cosets_torsion}, we see that the homomorphism $\pi \colon  \mathbb{Z}^k \oplus \Tor(B)\to (\mathbb{Z}/m \mathbb{Z})^k \oplus \Tor(B)$ defined as the identity on $\Tor(B)$ and as the coordinate-wise reduction mod $m$ on $\mathbb{Z}^k$ is injective on the set $S$ and for every $s \in S$ we have that $\pi(s) \in \langle \pi(b)\rangle$ if and only if $s \in \langle b \rangle$. In particular, this means that for every $s,s' \in \supp(f) \cup \supp(g)$ we have that $\pi(s)\langle \pi(b) \rangle = \pi(s')\langle \pi(b) \rangle$ if and only if $s \langle b \rangle = s'\langle b \rangle$. Set $\overline{B} =  (\mathbb{Z}/m \mathbb{Z})^k \oplus\Tor(B) $, and let $        \tilde{\pi} \colon A \wr B \to A \wr \overline{B}$ be the canonical extension of $\pi$ to the whole of $A \wr B$. From the construction of the map $\pi$, we see that $\tilde{\pi}$ is injective on $\supp(f) \cup \supp(g)$. Therefore, it follows that $\supp(\tilde{\pi}(f)) = \pi(\supp(f))$ and $\supp(\tilde{\pi}(g)) = \pi(\supp(g))$. Furthermore, we see that for every two $s, s' \in \supp(f) \cup \supp(g)$ we have that $\tilde{\pi}(s) \langle \tilde{\pi}(b)\rangle = \tilde{\pi}(s')\langle \tilde{\pi}(b)\rangle$ in $\overline{B}$ if and only if $s\langle b \rangle = s'\langle b \rangle$ in $B$. In particular, we see that the elements $\tilde{\pi}(fb)$ and $\tilde{\pi}(gb)$ are in reduced form.
    
    Since the elements $\tilde{\pi}(fb)$ and $\tilde{\pi}(gb)$ are in reduced form, we may use Lemma \ref{lemma:abelian_wreath_conjugacy_criterion} to check whether or not they are conjugate in $A\wr \overline{B}$. We note that regardless of whether $k = 1$ or $k \geq 2$, we have that $m \geq 4\ell$. Additionally, if $|\supp(f)|\neq |\supp(g)|$, then $|\supp(\tilde{\pi}(f))| \neq |\supp(\tilde{\pi}(g))|$. Subsequently, since $\supp(\tilde{\pi}(f))$ is not a translate of $\supp(\tilde{\pi}(g))$, we have that $\tilde{\pi}(fb)$ is not conjugate to $\tilde{\pi}(fb)$ by Lemma \ref{lemma:abelian_wreath_conjugacy_criterion}. Therefore, we may assume that $|\supp(f)| = |\supp(g)|$. 
    
    %For contradiction, let us assume that $\tilde{\pi}(fb)$ is conjugate to $\tilde{\pi}(fb)$. Then by Lemma \ref{lemma:abelian_wreath_conjugacy_criterion} we see that $\supp(\tilde{\pi}(f))$ is a translate of $\supp(\tilde{\pi}(g))$ and there exists some $\overline{a} \in \overline{B}$ such that for every $\overline{x} \in \supp(\tilde{\pi}(f))$ we have $\tilde{\pi}(f)(\overline{x}+\overline{a}) = \tilde{\pi}(g)(\overline{x})$.
    
    Let 
    $$
    \supp(f) = \{x_1, \dots, x_m\} \quad \text{ and } \quad \supp(g) =  \{y_1, \dots, y_m\}.
    $$
    Via Lemma \ref{lemma:direct product translation criterion}, we see that $\supp(\tilde{\pi}(f))$ is a translate of $\supp(\tilde{\pi}(g))$ if and only if $\{\pi(\phi(x_1)),\dots, \pi(\phi(x_m))\}$ is a translation of  $\{\pi(\phi(y_1)),\dots, \pi(\phi(y_m))\}$ and the set  $\{\pi(\tau(x_m)),\dots, \pi(\tau(x_m))\}$ is a translation of  $\{\pi(\tau(y_1)),\dots, \pi(\tau(y_m))\}$. Moreover,  the translation of both pairs of sets is realised by the same permutation. Lemma \ref{lemma:torus_lemma} implies that a permutation $\sigma \in \Sym(m)$ realises a translation of $\{\pi(\phi(x_1)), \dots \pi(\phi(x_m))\}$ onto  $\{\pi(\phi(y_1)), \dots \pi(\phi(y_m))\}$ if and only if it realises a translation of  $\{\phi(x_1), \dots, \phi(x_m)\}$ onto the list $\{\phi(y_1), \dots, \phi(y_m)\}$. Since $\pi$ is defined as the identity on $\Tor(B)$, we see that $\supp(\tilde(\pi)(f))$ is a translate of $\supp(\tilde{\pi}(g))$ if and only if $\supp(f)$ is a translate of $\supp(g)$. Therefore, if $\supp(f)$ is not a translate of $\supp(g)$ in $B$, we see by Lemma \ref{lemma:abelian_wreath_conjugacy_criterion} that $\tilde{\pi}(fb)$ is not conjugate to $\tilde{\pi}(gb)$. Thus, we may suppose that $\supp(\tilde{\pi}(f))$ is a translate of $\supp(\tilde{\pi}(g))$.
    
    Now suppose that $a + \supp(f) = \supp(g)$ for some $a \in B$. By assumption, there exists some $x \in \supp(g)$ such that $f(x+a) \neq g(x)$. As mentioned before, Lemma \ref{lemma:torus_lemma} implies that every translation of $\supp(\tilde{\pi}(f))$ onto $\supp(\tilde{\pi}(g))$ must have already occurred in $B$. By the construction of $\pi$, we see that for every $x \in \prod_{i=1}^k[-m, m] \times \Tor(B)$ we have $\tilde{\pi}(f)(x + K) = f(x)$ and $\tilde{\pi}(g)(x + K) = g(x)$. We see that for every $a \in \Tor(B) \times (\mathbb{Z}/m \mathbb{Z})^k$  such that $\pi(a) + \supp(\tilde{\pi}(f)) = \supp(\tilde{\pi}(g))$, there must exist $\overline{x} \in \supp(\tilde{\pi}(f))$ such that $\tilde{\pi}(f)(\pi(a)+\overline{x}) \neq \tilde{\pi}(g)(x)$. That means that $\tilde{\pi}(fb)$ is not conjugate to $\tilde{\pi}(gc)$.
    
    If $k = 1$, set $C = 8  C_1 e |\Tor(B)|$. We then have
    \begin{displaymath}
        \left|\overline{B}\right| = m |\Tor(B)| \leq  \left( 8 C_1 e n \right)|\Tor(B)| = C n.
    \end{displaymath}
    
    If $k \geq 2$, set $C = k^k 2^{k(k+1)} C_1^{2k} e^{2k} |\Tor(B)|$. We then have
    \begin{displaymath}
        \left|\overline{B}\right| = m^k |\Tor(B)| \leq  \left(k 2^{k+1} 2 (C_1 n)^2 \right)^k|\Tor(B)| = C n^{2k}
    \end{displaymath}
    which concludes our proof. 
    \end{proof}

As an immediate consequence of Proposition \ref{proposition:abelian_wreath_reduction}, we get the following upper bound for wreath products of abelian groups with finite base group. 

\begin{proposition}
    \label{lemma:lamplighter_upper}
    Let $A$ is a finite abelian group and $B$ is a finitely generated abelian group of torsion free rank $k$. If $k = 1$,
    $$
    \Conj_{A \wr B}(n) \preceq 2^n.
    $$
    Otherwise, for $k>1$, we have
    \begin{displaymath}
        \Conj_{A \wr B}(n) \preceq 2^{n^{2k}}.
    \end{displaymath}
\end{proposition}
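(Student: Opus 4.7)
The plan is to derive this bound as an immediate consequence of Proposition~\ref{proposition:abelian_wreath_reduction}, together with a size count for finite wreath products. Given any two non-conjugate elements $x, y \in \Ball_{A \wr B}(n)$, I would write $x = f_1 b_1$ and $y = f_2 b_2$, and split into two cases depending on whether the acting parts $b_1, b_2 \in B$ coincide.

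In the easy subcase $b_1 \neq b_2$, since $B$ is a finitely generated residually finite abelian group, with polynomially bounded residual finiteness depth, I can find a finite quotient $\overline{B}$ of $B$ with $|\overline{B}|$ polynomial in $n$ that distinguishes $b_1$ and $b_2$. Composing with the canonical projection $A \wr B \to B$ and taking the induced map $A \wr B \to A \wr \overline{B}$ already separates $x$ and $y$ (since any pair of conjugate elements would still have equal projections to $\overline{B}$). The resulting finite quotient has size well within the target bound.

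The main case is $b_1 = b_2$, where I would directly invoke Proposition~\ref{proposition:abelian_wreath_reduction} to obtain a surjection $\pi \colon B \to \overline{B}$ with $|\overline{B}| \leq Cn$ when $k = 1$ and $|\overline{B}| \leq Cn^{2k}$ when $k > 1$, such that the canonical extension $\tilde{\pi} \colon A \wr B \to A \wr \overline{B}$ separates $x$ and $y$. The finite quotient we aim for is then $A \wr \overline{B}$ itself, whose order equals $|A|^{|\overline{B}|} \cdot |\overline{B}|$.

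The final accounting is routine: since $|A|$ is a fixed constant, $|A|^{|\overline{B}|}$ is equal to $2^{(\log_2 |A|) |\overline{B}|}$, so the size grows as $2^{O(|\overline{B}|)}$ times a polynomial factor. Substituting the bounds on $|\overline{B}|$ yields $\CD_{A\wr B}(x,y) \leq |A \wr \overline{B}|$, which is $\preceq 2^n$ when $k = 1$ and $\preceq 2^{n^{2k}}$ when $k > 1$. Taking the supremum over pairs $x, y \in \Ball_{A\wr B}(n)$ gives the claimed asymptotic bound. There is no genuine obstacle at this stage since all the hard work was done in Proposition~\ref{proposition:abelian_wreath_reduction}; the only point worth verifying is that the easy subcase $b_1 \neq b_2$ contributes a quotient whose size is negligible compared to the exponential target.
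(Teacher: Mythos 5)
Your proposal is correct and takes essentially the same route as the paper: split into the case of distinct acting parts (handled by a small abelian quotient of $B$) and the case $b_1=b_2$, where Proposition~\ref{proposition:abelian_wreath_reduction} supplies $\overline{B}$ with $|\overline{B}| \leq Cn$ (resp.\ $Cn^{2k}$) and the count $|A \wr \overline{B}| = |A|^{|\overline{B}|}\cdot|\overline{B}|$ gives the bounds $2^n$ and $2^{n^{2k}}$. One small caution for the easy case: with only a \emph{polynomial} bound on $|\overline{B}|$ you should take the quotient $\overline{B}$ itself (conjugacy there is equality, as in the paper, which uses a quotient of size $O(\log n)$ via \cite[Corollary 2.3]{BouRabee10}) rather than $A \wr \overline{B}$, since $|A|^{|\overline{B}|}$ could exceed $2^n$ when $k=1$ if $|\overline{B}|$ grows faster than linearly.
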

\begin{proof}
    Suppose that $f,b \in A^B$ are finitely supported functions and $b,c \in A$ are elements such that $fb, bc \in B_G(n)$ and where $fb \not\sim_G gc$.
    
    Suppose first that $b \neq c$. Since $b - c \in \Ball_{B}(2n)$, \cite[Corollary 2.3]{BouRabee10} implies there exists a constant $C_1 > 0$ and a surjective homomorphism $\varphi \colon B \to Q$ such that $\varphi(b) \neq \varphi(c)$ and where $|Q| \leq C_1 \: \log(C_1 n).$ Since $Q$ is abelian, we have that $\varphi(b)$ and $\varphi(c)$ are non-conjugate. By composing $\varphi$ with the projection of $A \wr B$ onto $B$, which we also denote $\varphi$, we have a surjective homomorphism $\varphi \colon A \wr B \to Q$ such that $\varphi(fb) \nsim \varphi(gc)$ and where $|Q| \leq C_1  \log(C_1  n).$ Therefore, we may assume that $b = c.$
    
    Following Proposition \ref{proposition:abelian_wreath_reduction}, we have two cases. When the torsion free rank is $1$, we see that there exists a finite abelian group $\overline{B}$ together with a surjective homomorphism $\phi \colon A \wr B \to A \wr \overline{B}$ such that $|\overline{B}| \leq C_2n$ and where $\pi(fb)$ is not conjugate to $\pi(gc)$ in $A \wr \overline{B}$ for some constant $C_2 > 0$. We see that
    \begin{displaymath}
        |A \wr \overline{B}| = |A|^{|\overline{B}|}|\overline{B}| \leq |A|^{C_2n}C_2n. 
    \end{displaymath}
    Interpreting the size of $|A \wr \overline{B}|$ as a function of $n$, we get that
    \begin{displaymath}
        |A \wr \overline{B}| \leq |\overline{B}| |A|^{C_2n}C_2n \preceq |A|^{n} n \preceq |A|^n \preceq 2^n.
    \end{displaymath}
    Subsequently, we see that $\Conj_G(n) \preceq 2^n$.

    When the torsion free rank is greater than $1$, we see that there exists a finite abelian group $\overline{B}$ together with a surjective homomorphism $\phi \colon A \wr B \to A \wr \overline{B}$ such that $|\overline{B}| \leq (C_2n)^{2k}$ and where $\pi(fb)$ is not conjugate to $\pi(gc)$ in $A \wr \overline{B}$ for some constant $C_2 > 0$. We see that $|A \wr \overline{B}| \preceq 2^{n^{2k}}.$ Consequently, we see that $\Conj_G(n) \preceq 2^{n^{2k}}$.
\end{proof}

\subsection{Infinite base groups} $\:$ \newline
When given a wreath product of finitely generated abelian groups $A \wr B$ where $A$ is infinite, the following lemma will allow us to construct an upper bound for size of the quotient of the base group given two elements of $fb$ and $gb$ where $f, g \colon B \to A$ are finitely supported functions.
\begin{lemma}
    \label{lemma:abelian_base_reduction}
    Let $A$ and $B$ be finitely generated abelian groups where $A$ is infinite and $B$ is finite. Let  $f,g \colon B \to A$, $b \in B$ be such that $fb, gb \in \Ball_G(n)$,  the elements $fb$ and $gb$ are reduced, and $fb \not\sim_G gb$. Then there exists a surjective homomorphism $\pi \colon A \to \overline{A}$ to a finite group $\overline{A}$ such that
    \begin{displaymath}
        \left|\overline{A}\right| \leq \min \left\{\log(Cn)^{2|B|}, \log(Cn)^{Cn^2}\right\}
    \end{displaymath}
    and where $\tilde{\pi}(fb) \not\sim \tilde{\pi}(gc)$ in $\overline{A} \wr B$ for some constant $C>0$ independent of $f,g,b,n$.
\end{lemma}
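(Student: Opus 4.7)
The plan is to build a finite quotient $\pi \colon A \to \overline{A}$ whose coordinate-wise extension $\tilde{\pi}$ preserves enough combinatorial information about $f$ and $g$ to witness their non-conjugacy via Lemma \ref{lemma:abelian_wreath_conjugacy_criterion}. Since $B$ is finite and both $fb$ and $gb$ are reduced, it suffices to make $\pi$ non-trivial on a short finite list of elements of $A$, each of bounded norm, and then use effective residual finiteness of $A$ to bound the size of $\overline{A}$.

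By Lemma \ref{lemma:abelian_wreath_conjugacy_criterion} applied to the reduced pair $fb, gb$, one of two possibilities holds: (i) $\supp(f)$ is not a translate of $\supp(g)$ in $B$, or (ii) for every $c \in B$ realising such a translation there is some $x_c \in \supp(g)$ with $f(x_c + c) \neq g(x_c)$. In case (ii) I would fix a choice of $x_c$ for each valid $c$, and define the witness set
\[
T = \{f(x) \mid x \in \supp(f)\} \cup \{g(x) \mid x \in \supp(g)\} \cup \{f(x_c + c) - g(x_c) \mid c \text{ is valid}\} \subseteq A \setminus \{0\}.
\]
The first two subfamilies ensure $\supp(\tilde{\pi}(f)) = \supp(f)$ and $\supp(\tilde{\pi}(g)) = \supp(g)$, so the set of candidate conjugating translations in $B$ is unchanged after applying $\tilde{\pi}$. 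The third subfamily ensures, in case (ii), that for each such candidate $c$ the values of $\tilde{\pi}(f)$ and $\tilde{\pi}(g)$ still disagree at position $x_c$.

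Next I would bound $|T|$ and the norms of its elements. By Lemma \ref{lemma:bounding_support_and_image}, $|\supp(f)|, |\supp(g)| \leq C_0 n$ and are trivially bounded by $|B|$, while $\rng(f) \cup \rng(g) \subseteq \Ball_A(C_0 n)$. The set of valid $c$'s is either empty or a single coset of the stabiliser of $\supp(f)$ in $B$; since the stabiliser's order divides $|\supp(f)|$, the third subfamily has size at most $|\supp(f)|$. Hence $|T| \leq 2|\supp(f)| + |\supp(g)|$, which is $O(|B|)$ on one hand and $O(n)$ on the other. Moreover, every $t \in T$ lies in $\Ball_A(2 C_0 n)$.

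Finally, I would invoke effective residual finiteness for finitely generated abelian groups (see \cite[Corollary 2.3]{BouRabee10}): there is a constant $C_2$ depending only on $A$ such that for every non-trivial $t \in \Ball_A(2 C_0 n)$ there is a surjection $\pi_t \colon A \to A_t$ onto a finite abelian group with $|A_t| \leq C_2 \log(C_2 n)$ and $\pi_t(t) \neq 0$. Setting $\pi \colon A \to \overline{A}$ to be the projection onto $A / \bigcap_{t \in T} \ker(\pi_t)$, the diagonal embedding $\overline{A} \hookrightarrow \prod_{t \in T} A_t$ gives
\[
|\overline{A}| \leq (C_2 \log(C_2 n))^{|T|},
\]
which after absorbing constants into a single $C$ yields the stated bound under either estimate on $|T|$. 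To verify non-conjugacy: if $\tilde{\pi}(fb) \sim \tilde{\pi}(gb)$, Lemma \ref{lemma:abelian_wreath_conjugacy_criterion} together with the support-preservation built into $T$ would produce a valid translation $c$ with $\tilde{\pi}(f)(c + x) = \tilde{\pi}(g)(x)$ for all $x \in B$; this is directly ruled out in case (i), and in case (ii) the element $f(x_c + c) - g(x_c)$ belongs to $T$, so $\pi$ maps it to a nonzero element, which means $\tilde{\pi}(f)(x_c + c) \neq \tilde{\pi}(g)(x_c)$, a contradiction. The main technical point is the stabiliser bound on the number of valid $c$'s, which keeps the exponent of $\log(Cn)$ proportional to $|B|$ (and to $n$); without it, the final estimate would be too weak for the applications in Section \ref{sec:proofs}.
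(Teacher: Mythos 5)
Your proof is correct and, in its first half, essentially coincides with the paper's argument for the $\log(Cn)^{2|B|}$ term: separate the nonzero range values of $f$ and $g$ to preserve supports (hence reducedness and the set of candidate translations), and separate one witnessing difference per valid translation, then intersect kernels and bound the quotient by effective residual finiteness of $A$. Where you genuinely diverge is in how the number of witnesses is controlled for the $n$-dependent bound. The paper does not bound the set of valid translations in terms of $n$ at all; instead it separates \emph{all} pairwise differences of elements of $\rng(f)\cup\rng(g)$, which costs $\binom{|R|}{2}\approx n^2$ conditions and is exactly where the exponent $Cn^2$ comes from. You instead observe that the valid translations form a single coset of the setwise stabiliser of $\supp(f)$ in $B$, that this stabiliser acts freely on $\supp(f)$, and hence that its order divides $|\supp(f)|\leq C_1 n$; this caps the number of witnesses at $O(\min\{|B|,n\})$ and yields $|\overline{A}|\leq \log(Cn)^{Cn}$, which is strictly stronger than the stated $\log(Cn)^{Cn^2}$ term (and, fed into Proposition \ref{infinite_upper_bound}, would even shave the exponent $n^{2k+2}$ to $n^{2k+1}$ in the higher-rank case). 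The only blemish is bookkeeping on the $|B|$-term: your witness set has size at most $2|\supp(f)|+|\supp(g)|\leq 3|B|$, so literally you get exponent $3|B|$ rather than $2|B|$, and a multiplicative constant in the exponent cannot be absorbed into the $C$ inside the logarithm; the paper's own count ($|R|+|T|$ with $|R|\leq 2|B|$ possible) has the same slack, and it is immaterial for the applications since there $|B|$ is itself $O(n)$ or $O(n^{2k})$ and the exponent constant is absorbed at that stage. Apart from that cosmetic point, the argument is complete: support preservation guarantees the images are reduced and have the same candidate translations, so Lemma \ref{lemma:abelian_wreath_conjugacy_criterion} applies to the images and your stored witnesses rule out every candidate.
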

\begin{proof}
    Since $fb \not\sim_G gb$ and the elements $fb, gb$ are reduced, Lemma \ref{lemma:abelian_wreath_conjugacy_criterion} implies that one of the following must be true:
     \begin{enumerate}
        \item $\supp(f)$ is not a translate of $\supp(g)$ in $B$,
        \item for every $a \in B$ such that $a + \supp(f) = \supp(g)$ there exists some $x \in \supp(g)$ such that $f(x+a) \neq g(x)$.
    \end{enumerate}
   Lemma \ref{lemma:bounding_support_and_image} implies there exists a constant $C_1 > 0$ such that 
   $$
   |\supp(f)|, |\supp(g)| \leq C_1 n
   $$ and that $\rng(f), \rng(g) \subseteq B_A(C_1 n)$. Denote $R = \rng(f) \cup \rng(g)$, Clearly $|R| \leq 2C_1 n$ and $R \subseteq B_A(C_1 n)$.

First, we will show that there is a finite group $\overline{A}$ satisfying the requirements on conjugacy such that $|\overline{A}| \leq \log(Cn)^{|B|}$. Suppose that $\supp(f)$ is not a translate of $\supp(g)$ in $B$. Since $A$ is a finitely generated abelian group, its residual finiteness depth function is equivalent to $\log(n)$. It then follows that for every $r \in R \subseteq B$ there is a normal finite index subgroup $K_r$ of $A$ such that $r \notin K_r$ and $|A:K_r| \leq \log(C_1 n)$. Set $K = \cap_{r \in R} K_r$ with natural projection given by $\pi \colon A \to A/K$. As none of the elements in $R$ get mapped to the identity, we see that $\supp(\tilde{\pi}(f)) = \supp(f)$ and $\supp(\tilde{\pi}(g)) = \supp(g)$. In particular, we see that the elements $\tilde{\pi}(fb), \tilde{\pi}(gb)$ are reduced. It follows that $\supp(\tilde{f})$ is not a translate of $\supp(\tilde{g})$. Therefore, we have that $\tilde{\pi}(fb)$ is not conjugate to $\tilde{\pi}(gc)$ in $(A/K) \wr B$ by Lemma \ref{lemma:abelian_wreath_conjugacy_criterion}. We see that
	\begin{displaymath}
		|A/K| \leq \log(C_1 n)^{|R|} \leq \log(C_1 n)^{|B|}.
	\end{displaymath}	
	Suppose that $\supp(f)$ is a translate of $\supp(g)$ and let $T \subseteq B$ be the set of all elements of $B$ that translate $\supp(f)$ onto $\supp(g)$. By assumption, for every $t \in T$ there is $x \in B$ such that $f(t + x) \neq g(x)$. For every such $t$, there exists a normal finite index subgroup $K_t$ of $A$ such that $f(t + x_t)K_t \neq g(x_t)K_t$ for some $x_t \in B$ and $|A: K_t| \leq \log(C_1 n)$. Denote
\begin{displaymath}
	K = \bigcap_{r \in R} K_r \quad \cap \bigcap_{t \in T} K_t,
\end{displaymath}
     where $K_r$ is defined as in the previous paragraph, and let $\pi \colon A \to A/K$ be the natural projection. Clearly, $\supp(\tilde{\pi}(f)) = \supp(f)$ and $\supp(\tilde{\pi}(g)) = \supp(g)$ and, again, we see that the elements $\tilde{\pi}(fb), \tilde{\pi}(gb)$ are reduced. From the construction of the $K$ we see that for every $t \in T$
 there is $x_t \in B$ such that
 \begin{displaymath}
   \tilde{\pi}(f)(x_t + t) = \pi(f(x_t + t)) \neq \pi(g(x_t)) = \tilde{\pi}(g)(x_t).
 \end{displaymath}
That means $\tilde{\pi}(fb)$ is not conjugate to $\tilde{\pi}(gb)$ by Lemma \ref{lemma:abelian_wreath_conjugacy_criterion}. To bound the index of $K$, we can write
\begin{equation}
	\label{eq:cyclic bound}
	|A/K| \leq \log(C_1 n)^{|R|}  \log(C_1 n)^{|T|} \leq  \log(C_1 n)^{2|B|}.
\end{equation}

   Now we show that there is a finite group $\overline{A}$ satisfying the requirements on conjugacy such that $   |\overline{A}| \leq \log(Cn)^{C n^2}.$
   Suppose that $\supp(f)$ is not a translate of $\supp(g)$ in $B$.    Since $A$ is a finitely generated abelian group, \cite[Corollary 2.3]{BouRabee10} implies that for every $r \in R$ there is $K_r \normfileq A$ such that $r \notin K_r$ and $|A/K_r| \leq \log(C_2 n)$ for some constant $C_2 > 0$. Set $K = \cap_{r \in R} K_r$ with natural projection given by $\pi \colon A \to A/K$. As none of the elements in $R$ get mapped to the identity, we see that $\supp(\tilde{\pi}(f)) = \supp(f)$ and $\supp(\tilde{\pi}(g)) = \supp(g)$ and that the elements $\tilde{\pi}(fb), \tilde{\pi}(gb)$ are reduced. It follows that $\supp(\tilde{f})$ is not a translate of $\supp(\tilde{g})$. Therefore, we have that $\tilde{\pi}(fb)$ is not conjugate to $\tilde{\pi}(gc)$ in $(A/K) \wr B$ by Lemma \ref{lemma:abelian_wreath_conjugacy_criterion}. Clearly,
   \begin{displaymath}
       |A/K| \leq \log(C_2 n)^{|R|} \leq \log(C_2 n)^{2C_1 n} \leq \log(2C_2 n)^{2C_1 n}.
   \end{displaymath}
   
   Now suppose for every $a \in B$ such that $a + \supp(f) = \supp(g)$ there exists some $x \in \supp(g)$ such that $f(x+a) \neq g(x)$. For every $r \in R$, let $K_r$ be defined as in the previous paragraph recalling that $|A/K_r| \leq \log(C_0 n)$. For every $\{r,s\} \subset R$ there is $K_{r,s} \normfileq A$ such that $r-s \notin K_{r,s}$ and $|A/K_{r,s}| \leq \log(2C_2 n)$. Denote
   \begin{displaymath}
       K = \bigcap_{r \in R} K_r \quad \cap \bigcap_{\{r,s\} \in R}K_{r,s}
   \end{displaymath}
   with associated natural projection given by $\pi \colon A \to A/K$. Following the same argument as in the previous case, we see that $\supp(\tilde{\pi}(f)) = \supp(f)$, $\supp(\tilde{\pi}(g)) = \supp(g)$, and the elements $\tilde{\pi}(fb), \tilde{\pi}(gb)$ are reduced. Now suppose that $a \in B$ is given such that $a + \supp(\tilde{\pi}((f)) = \supp(\tilde{\pi}(g))$. That is equivalent to $a + \supp(f) = \supp(g)$. Thus, there is some $x_a \in B$ such that $f(x_a + a) \neq g(x_a)$. However, from the construction of $K$ we see that 
   $$
   \tilde{\pi}(f)(x_a + a) = \pi(f(x_a + a)) \neq \pi(g(x_a)) = \tilde{\pi}(g)(x_a).
   $$ 
   In particular, Lemma \ref{lemma:abelian_wreath_conjugacy_criterion} implies that $\tilde{\pi}(fb)$ is not conjugate to $\tilde{\pi}(g)$ in $(A/K)\wr B$. Finally, we finish with
   \begin{equation}	
  	\label{eq:higher rank bound}
       |A/K| \leq \log(C_2 n)^{|R|} \cdot \log(2 C_2 n)^{\left| \binom{R}{2}\right|} \leq \log(2 C_1 n)^{2 \binom{2 C_2 n}{2}} \leq \log(4C_2 n)^{4C_2 n^2}.
   \end{equation}
   Combining (\ref{eq:cyclic bound}) and (\ref{eq:higher rank bound}) immediately yields the result.
\end{proof}

Combining Proposition \ref{proposition:abelian_wreath_reduction} together with Lemma \ref{lemma:abelian_base_reduction} gives the following upper bound for wreath products of infinite, finitely generated abelian groups.
\begin{proposition}\label{infinite_upper_bound}
    Suppose that $A,B$ are infinite finitely generated abelian groups. If $B$ is virtually cyclic then
    \begin{displaymath}
        \Conj_{A \wr B}(n) \preceq (\log(n))^{n^2}.
    \end{displaymath}
    Otherwise,
    \begin{displaymath}
        \Conj_{A \wr B}(n) \preceq (\log(n))^{n^{2k+2}}
    \end{displaymath}
    where $k$ is the torsion-free rank of $B$.
\end{proposition}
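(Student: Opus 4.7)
The plan is to imitate the proof of Proposition~\ref{lemma:lamplighter_upper} but stack two reductions: first cut down the acting group $B$ using Proposition~\ref{proposition:abelian_wreath_reduction}, and then cut down the base group $A$ using Lemma~\ref{lemma:abelian_base_reduction}. Let $fb, gc \in \Ball_{A \wr B}(n)$ be non-conjugate. If $b \neq c$, then since $B$ is a finitely generated abelian group, \cite[Corollary~2.3]{BouRabee10} produces a finite quotient of $B$ of size $\preceq \log(n)$ separating $b$ from $c$; composing with the canonical projection $A \wr B \to B$ yields a finite quotient of $A \wr B$ of size $\preceq \log(n)$ in which the images are already non-conjugate, well within the desired bound. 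Hence one may assume $b = c$, and by Lemma~\ref{lemma:reduced_elements} one may further assume that $fb$ and $gb$ are reduced, paying only a linear constant in word length.

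Next, apply Proposition~\ref{proposition:abelian_wreath_reduction} to obtain a surjection $\pi_B \colon B \to \bar{B}$ onto a finite abelian group with $\tilde{\pi}_B(fb) \not\sim \tilde{\pi}_B(gb)$ in $A \wr \bar{B}$, where $|\bar{B}| \leq C n$ if $B$ has torsion-free rank $1$ and $|\bar{B}| \leq C n^{2k}$ if the rank is $k \geq 2$. The construction in that proposition also guarantees that the images are still reduced in $A \wr \bar{B}$, and since $\tilde{\pi}_B$ is a surjective homomorphism their word lengths in $A \wr \bar{B}$ remain $\preceq n$. The hypotheses of Lemma~\ref{lemma:abelian_base_reduction} are now met with base group $\bar{B}$, so it produces a surjection $\pi_A \colon A \to \bar{A}$ whose extension keeps the images non-conjugate in $\bar{A} \wr \bar{B}$, with
\begin{displaymath}
|\bar{A}| \leq \min\left\{ \log(C'n)^{2|\bar{B}|},\ \log(C'n)^{C'n^2} \right\}.
\end{displaymath}

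The conclusion is then an arithmetic bound on $|\bar{A} \wr \bar{B}| = |\bar{A}|^{|\bar{B}|}\,|\bar{B}|$, where the correct branch of the minimum must be selected in each case. When $B$ is virtually cyclic, $|\bar{B}| \preceq n$, so I select the first branch to get $|\bar{A}| \preceq \log(n)^{n}$ and thus $|\bar{A} \wr \bar{B}| \preceq \log(n)^{n \cdot n}\cdot n \preceq (\log n)^{n^2}$. When the torsion-free rank is $k \geq 2$, the estimate $|\bar{B}| \preceq n^{2k}$ makes the second branch smaller, giving $|\bar{A}| \preceq \log(n)^{n^2}$ and therefore $|\bar{A} \wr \bar{B}| \preceq \log(n)^{n^2 \cdot n^{2k}} \cdot n^{2k} \preceq (\log n)^{n^{2k+2}}$.

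The main bookkeeping obstacle is to ensure the word-length estimates survive the first quotient so that one may invoke Lemma~\ref{lemma:abelian_base_reduction} with essentially the same $n$, and to choose the correct term of the minimum in each regime; once one observes that $|\bar{B}|$ is already polynomial in $n$ with the polynomial degree depending on whether the rank is $1$ or $\geq 2$, the two cases of the proposition fall out in parallel. No new ideas beyond Proposition~\ref{proposition:abelian_wreath_reduction} and Lemma~\ref{lemma:abelian_base_reduction} are required.
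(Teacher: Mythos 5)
Your proposal is correct and follows essentially the same route as the paper's own proof: reduce to $b=c$ via \cite[Corollary 2.3]{BouRabee10}, pass to reduced representatives, quotient the acting group via Proposition \ref{proposition:abelian_wreath_reduction}, then quotient the base via Lemma \ref{lemma:abelian_base_reduction}, selecting the first branch of the minimum in the virtually cyclic case and the second branch when the rank is at least two. The final arithmetic on $|\bar{A}\wr\bar{B}| = |\bar{B}|\cdot|\bar{A}|^{|\bar{B}|}$ matches the paper's computation.
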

\begin{proof}
    Let $k$ denote the torsion-free rank of $B$, and suppose that $n \in \mathbb{N}$, $f,g \in A^B$, and $b,c \in B$ are given such that $fb, gc \in B_{A \wr B}(n)$ and $fb \not\sim gc$.
    
    Suppose first that $b \neq c$. Since $b - c \in \Ball_{B}(2n)$, \cite[Corollary 2.3]{BouRabee10} implies there exists a constant $C_1 > 0$ and a surjective homomorphism $\varphi \colon B \to Q$ such that $\varphi(b) \neq \varphi(c)$ and where $|Q| \leq C_1 \log(C_1 n).$ Since $A$ is abelian, we have that $\varphi(b)$ and $\varphi(c)$ are non-conjugate. By composing $\varphi$ with the projection of $A \wr B$ onto $B$ which we also denote $\varphi$, we have a surjective homomorphism $\varphi \colon A \wr B \to Q$ such that $\varphi(fb) \nsim \varphi(gc)$ and where $|Q| \leq C_1 \: \log(C_1 n).$ Thus, we may assume that $b = c.$
    
    Following Lemma \ref{lemma:reduced_elements} there exist a constant $C_0$ and functions $f',g' \in A^B$ such that $f'b \sim fb$ and $g'b \sim gb$, the elements $f'b, g'b$ are reduced, and $fb, gb \in B_{A \wr B}(C_0n)$.
    
    Following Proposition \ref{proposition:abelian_wreath_reduction}, there is a constant $C_B$ (independent of $n,f,g,b,c$) and a finite abelian group $\overline{B}$ together with a surjective homomorphism $\pi_B \colon B \to \overline{B}$ such that the elements $\tilde{\pi_B}(f'c)$ and $\tilde{\pi_B}(g'c)$ are reduced, $\tilde{\pi_B}(f'b)$ is not conjugate to $\tilde{\pi_B}(g'c)$ in $A \wr \overline{B}$, where $|\overline{B}| \leq C_0 C_B n$ if $B$ has torsion free rank $1$ and where $|\overline{B}| \leq C_0 C_B n^{2k}$ if $B$ has torsion free rank $k>1$.

    Set $\overline{G} = A \wr \overline{B}$. One can easily check that $\tilde{\pi}_B(B_G(C_0n)) \subseteq B_{\overline{G}}(C_0n)$. In particular, $\tilde{\pi}_B(f'b), \tilde{\pi}_B(g'c) \subseteq B_{\overline{B}}(C_0n)$. Lemma \ref{lemma:abelian_base_reduction} implies that there is a constant $C_A$ (also independent of $n,f,g,b,c$) and a finite abelian group $\overline{A}$ together with a surjective homomorphism $\pi_A \colon A \to \overline{A}$ such that 
    \begin{displaymath}
    	|\overline{A}| \leq \min\left\{\log(C_A n)^{2|B|}, \log(C_A n)^{C_An^2}\right\}
    \end{displaymath}
    and where $\tilde{\pi}_A(\tilde{\pi}_B(fb))$ is not conjugate to $\tilde{\pi}_A(\tilde{\pi}_B(gc))$ in $\overline{A} \wr \overline{B}$.

    Therefore,  $\overline{A} \wr \overline{B}$ is a finite group. If $B$ has torsion free rank $1$, then $|B| \leq C_0 C_B n$. If $B$ is virtually cyclic, we set $C = 2 C_0 C_B^2$ and compute
    \begin{align*}
        \left|\overline{A}\wr \overline{B}\right| = \left|\overline{B}\right| \cdot \left|\overline{A}\right|^{\left|\overline{B}\right|} &\leq C_0 C_B n \cdot \left(\log(C_A n)^{ 2 C_0 C_B n}\right)^{C_0 C_B n}\\
        &= C_0 C_B n \cdot \left(\log(C_A n)\right)^{2 C_0^2 C_B^2 n^{2}}\\
        &\leq Cn \left(\log(Cn)\right)^{C n^{2}}.
    \end{align*}
    Interpreting $\left|\overline{A}\wr \overline{B}\right|$ as a function of $n$ immediately yields
    \begin{displaymath}
        \left|\overline{A}\wr \overline{B}\right| \leq Cn \left(\log(Cn)\right)^{C n^{2}} \approx n \log(n)^{n^{2}} \approx \log(n)^{n^{2}}.
    \end{displaymath}
    Therefore, we have that
    \begin{displaymath}
        \Conj_G(n) \preceq \log(n)^{n^{2}}.
    \end{displaymath}
    
    Now suppose that $k > 1$. Set $C = C_0 C_A C_B$. We then compute
    \begin{align*}
        \left|\overline{A}\wr \overline{B}\right| = \left|\overline{B}\right| \cdot \left|\overline{A}\right|^{\left|\overline{B}\right|} &\leq C_0 C_B n^{2k} \cdot \left(\log(C_A n)^{C_A n^2}\right)^{C_0C_B n^{2k}}\\
        &=C_0 C_B n^{2k} \cdot \left(\log(C_A n)\right)^{C_0 C_A C_B  n^{2k+2}}\\
        &\leq Cn^{2k} \left(\log(Cn)\right)^{(C n)^{2k+2}}.
    \end{align*}
    Interpreting $\left|\overline{A}\wr \overline{B}\right|$ as a function of $n$ immediately yields
    \begin{displaymath}
        \left|\overline{A}\wr \overline{B}\right| \leq Cn^{2k} \left(\log(Cn)\right)^{C n^{2k+2}} \approx n^{2k} \log(n)^{n^{2k+2}} \approx \log(n)^{n^{2k+2}},
    \end{displaymath}
    and therefore,
    \begin{displaymath}
        \Conj_G(n) \preceq \log(n)^{n^{2k+2}},
    \end{displaymath}
    which concludes our proof.              
\end{proof}

\section{Applications}
\label{sec:proofs}

In this section, we use Corollary \ref{corollary:main_corrolary} to derive lower bounds for the conjugacy separability depth function where the acting group is not necessarily abelian. For the statement of the theorem, we denote the center of a group $G$ as $Z(G)$.

\begin{theorem}\label{second_main_thm}
Let $A$ be a nontrivial finitely generated abelian group, and suppose that $G$ is a conjugacy separable finitely generated group with separable cyclic subgroups. Suppose that $\mathbb{Z} \leq Z(G)$. If $A$ is finite, then
$$
2^n \preceq \Conj_{A \wr G}(n).
$$
Otherwise,
$$
(\log(n))^{n}  \preceq \Conj_{A \wr G}(n).
$$
\end{theorem}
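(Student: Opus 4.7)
My plan is to reduce Theorem \ref{second_main_thm} to the abelian-acting lower bounds established in Section \ref{sec:lower bounds} by showing that $A \wr \mathbb{Z}$ is conjugacy embedded in $A \wr G$ whenever $\mathbb{Z} \leq Z(G)$. Combined with Lemma \ref{lemma:lamplighter conjugacy embedded} (for $A$ finite) or Lemma \ref{lemma:conjugacy embedded subgroups} (for $A$ infinite), transitivity of conjugacy embedding will then place $\mathbb{F}_p \wr \mathbb{Z}$ (respectively $\mathbb{Z} \wr \mathbb{Z}$) inside $A \wr G$ as a conjugacy-embedded subgroup. Since the inclusion of any finitely generated subgroup distorts word length by at most a multiplicative constant, Lemma \ref{conj_embedded_conj_depth} applied pair-by-pair then transfers the lower bounds of Proposition \ref{rank_one_lamplighter} or Proposition \ref{infinite_lower_bound} to $\Conj_{A \wr G}$, yielding exactly the inequalities claimed in the theorem.

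The key step is the conjugacy-embedding claim itself. Suppose $(f_1, b), (f_2, b) \in A \wr \mathbb{Z}$ are conjugate in $A \wr G$ by some $(h, c) \in A \wr G$. Centrality of $b$ forces $cbc^{-1} = b$, so the conjugation identity simplifies to $(1 - b) \cdot h = f_2 - c \cdot f_1$ in $A^G$. Since $\mathbb{Z}$ is central in $G$, the translation action of $b$ preserves every coset $g\mathbb{Z}$, so this identity decouples across cosets. I will split into two cases: if $c \in \mathbb{Z}$, restricting the identity to the coset $\mathbb{Z}$ shows directly that $(h|_{\mathbb{Z}}, c)$, which lies in $A \wr \mathbb{Z}$, already realises the conjugation. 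If $c \notin \mathbb{Z}$, then the cosets $\mathbb{Z}$ and $c\mathbb{Z}$ are disjoint, and restricting to these two cosets separately yields $(1-b) h_0 = f_2$ and $(1-b) h_1 = -f_1$ in $A^{\mathbb{Z}}$, where $h_0(z) = h(z)$ and $h_1(z) = h(cz)$; the formula for $h_1$ gives a valid identity precisely because $b$ commutes with $c$, so $h(bcz) = h(cbz)$. These two coboundary relations together witness $(f_1, b) \sim_{A \wr \mathbb{Z}} (0, b) \sim_{A \wr \mathbb{Z}} (f_2, b)$, completing the embedding argument.

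The main technical obstacle is the case $c \notin \mathbb{Z}$, where the witnessing conjugator does not a priori lie in $A \wr \mathbb{Z}$. Resolving it requires carefully extracting two distinct elements $h_0, h_1 \in A^{\mathbb{Z}}$ from the single function $h \in A^G$, using centrality of $\mathbb{Z}$ both to guarantee that the $b$-action preserves cosets and to make the substitution $h_1(z) = h(cz)$ yield a genuine $\mathbb{Z}$-indexed coboundary equation. Without centrality, the translation action of $b$ could shuffle cosets of $\mathbb{Z}$ and the coset-by-coset decoupling would fail; this is why a separate argument is needed to cover the retract case of Theorem \ref{theorem:acting with cyclic retract central subgroup}, where the embedding of $A \wr \mathbb{Z}$ into $A \wr G$ comes for free as a retract. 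The remaining pieces, including the choice of $p \mid |A|$ in the finite case and the extraction of a $\mathbb{Z}$-retract of $A$ in the infinite case, are routine.
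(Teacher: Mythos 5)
Your proposal is correct, and at the top level it follows the same strategy as the paper: use centrality of $\mathbb{Z}$ to show that the copy of $A \wr \mathbb{Z}$ (the paper takes $A \wr B$ with $B \leq Z(G)$) sitting inside $A \wr G$ is conjugacy embedded, and then transfer the lower bounds of Section \ref{sec:lower bounds} via Lemma \ref{conj_embedded_conj_depth} (the paper packages this last step through Corollary \ref{corollary:main_corrolary}). Where you genuinely diverge is in the proof of the embedding claim itself. The paper first passes to reduced representatives and invokes the conjugacy criterion for wreath products over a possibly non-abelian acting group (Lemmas 5.13 and 5.14 of \cite{ferov_pengitore}): the translating element $c \in C_G(b)$ must carry $\supp(f)$ into the central subgroup, hence lies in it, and a further support computation is then used to argue that the whole conjugator lies in $A \wr B$. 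You instead work directly with the conjugation identity $(1-b)\cdot h = f_2 - c\cdot f_1$, note that centrality of $b$ makes the operator $1-b$ preserve every coset of $\mathbb{Z}$, and decouple the equation coset by coset; crucially, when $c \notin \mathbb{Z}$ you do not try to force the conjugator into $A \wr \mathbb{Z}$ but instead extract two coboundary equations witnessing $(f_1,b) \sim_{A\wr\mathbb{Z}} (0,b) \sim_{A\wr\mathbb{Z}} (f_2,b)$. Your route is more self-contained (no reduction step, no appeal to the external criterion) and is actually the more careful treatment of this case: when $f_1,f_2$ are $(1-b)$-coboundaries a conjugator with $c$ outside the central subgroup really can occur, so the correct conclusion is conjugacy inside $A \wr \mathbb{Z}$ rather than membership of every conjugator in $A \wr B$, which is exactly what your argument delivers; the paper's appeal to the criterion buys brevity and applies verbatim with $\mathbb{Z}^k \leq Z(G)$.

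Three small points to tighten. First, the justification ``the inclusion of any finitely generated subgroup distorts word length by at most a multiplicative constant'' is false in general (subgroups can be badly distorted); all you need, and all that is true, is the trivial direction $\|x\|_{A \wr G} \leq C\,\|x\|_{A \wr \mathbb{Z}}$, which gives $\Ball_{A\wr\mathbb{Z}}(n) \subseteq \Ball_{A\wr G}(Cn)$ and suffices for transferring the lower bound. Second, your embedding argument only treats pairs with the same acting element $b$; this is harmless because the witness pairs of Propositions \ref{rank_one_lamplighter} and \ref{infinite_lower_bound} have this form, and because elements of $A \wr \mathbb{Z}$ with distinct central acting elements can never be conjugate in $A \wr G$ (their $G$-coordinates would be conjugate central elements, hence equal). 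Third, with the paper's convention $(c\cdot f_1)(x) = f_1(cx)$ the second coset is $c^{-1}\mathbb{Z}$ and one should take $h_1(z) = h(c^{-1}z)$; this is pure bookkeeping and does not affect the argument.
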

\begin{proof}

Let $B = \mathbb{Z}^k \leq Z(G)$. We claim that if $(f,b), (g,b) \in A \wr B$ then $x \sim_{A \wr G} y$ if and only if there exists $z \in A \wr B$ such that $z \: x \: z^{-1} = y$. Suppose first that $x \sim_{A \wr G} y$.  \cite[Lemma 5.13]{ferov_pengitore} implies we may assume that elements of $\supp(f)$ (respectively $\supp(g)$) lie in different right cosets of $\left< b\right>$. \cite[Lemma 5.14]{ferov_pengitore} implies that $x \sim_{A \wr G} y$ if and only if there exists an element $c \in C_G(b)$ such that $c f c^{-1} = g.$ That implies for all for $x \in G$, we have that $f(cx) = g(x)$. We note that $g(x) \neq 0$ only if $x \in B$. That implies $f(cx) \neq 0$ only if $cx \in B$. Hence, $c \in B.$ In particular, if $(h, c) \in A \wr B$ where $c \notin B$, then $(h,c) \cdot x \cdot (h,c)^{-1} \neq y$. Thus, if $(h, c) \cdot x \cdot (h,c)^{-1} = y$, then $c \in B$. Hence, we may write
\begin{eqnarray*}
(h, c) (f,b) (h, c)^{-1} &=& (h, c) (f,b)( (c^{-1} \cdot h,  c^{-1} ) \\
&=& (h + c \cdot f, c b) (c^{-1} \cdot h,  c^{-1} )\\
&=& (h + c\cdot f + (c b) \cdot c^{-1} \cdot h, g)\\
&=& ( h + c\cdot f + b \cdot h, g).
\end{eqnarray*}
Thus, if $(h,c) \cdot x \cdot (h, c) = y$, we must have that
$$
\supp(h + g^\prime \cdot f + g \cdot h) \subseteq B.
$$
Suppose $\supp(h) \not \subseteq B$.
In this case, we note that $\supp(h + g \cdot h) \not \subseteq B$ and that $\supp(c\cdot f) \subseteq B$. Therefore, $\supp(h + g^\prime \cdot f + g \cdot h) \not \subseteq B.$ Hence, we have that $(h,c) \cdot x \cdot (h,c)^{-1} \neq y$ which is a contradiction. Therefore, $\supp(f) \subseteq B$ which implies $(h,c) \in A \wr B.$ Since the other direction is clear, we have our claim.

    Subsequently, we have $\Conj_{A \wr B}(n) \preceq \Conj_{A \wr G}(n).$ Our theorem then follows from Corollary \ref{corollary:main_corrolary}.
\end{proof}

Since the rank of the center of an infinite, finitely generated nilpotent group is always positive, we have the following corollary.
\begin{corollary}
    Let $A$ be a finitely generated abelian group, and let $N$ be an infinite, finitely generated nilpotent group. If $A$ is finite, then
    $$
    2^n \preceq \Conj_{A \wr N}(n).
    $$
   Otherwise,
    $$
    (\log(n))^{n} \preceq \Conj_{A \wr N}(n).
    $$
    In both cases, we have that $\Conj_{A \wr N}(n)$ has at least exponential growth.
\end{corollary}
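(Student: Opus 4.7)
The plan is to derive this corollary as an immediate consequence of Theorem~\ref{second_main_thm} by verifying that every infinite, finitely generated nilpotent group $N$ satisfies its hypotheses. Explicitly, I need to check that $N$ is conjugacy separable, that every cyclic subgroup of $N$ is separable, and that $\mathbb{Z} \leq Z(N)$. The ``at least exponential growth'' assertion then follows from the bounds $2^n \preceq \Conj_{A \wr N}(n)$ and $(\log n)^n \preceq \Conj_{A \wr N}(n)$, since $(\log n)^n = \exp(n \log\log n)$ already dominates any genuine exponential.

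For the first two hypotheses, one simply cites the classical theorems: Remeslennikov (and independently Formanek) proved that every finitely generated nilpotent group is conjugacy separable, and Mal'cev proved that every finitely generated nilpotent group is subgroup separable (LERF), which in particular yields separability of cyclic subgroups. The substantive step is therefore the production of a central element of infinite order in $N$, which I would accomplish by induction on the nilpotency class $c$ of $N$. In the base case $c = 1$, $N$ itself is an infinite finitely generated abelian group and hence contains an element of infinite order.

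For the inductive step $c \geq 2$, I would look at $\gamma_c(N)$, the last nontrivial term of the lower central series, which is always contained in $Z(N)$. If $\gamma_c(N)$ is infinite we are done, since a finitely generated abelian group contained in $Z(N)$ then supplies an element of infinite order. Otherwise $\gamma_c(N)$ is finite, so $N/\gamma_c(N)$ is an infinite finitely generated nilpotent group of class $c-1$, and by the inductive hypothesis there exists $\tilde x \in N$ of infinite order with $[\tilde x, N] \leq \gamma_c(N)$. Since $\gamma_c(N)$ is central in $N$, every commutator $[y,\tilde x]$ is central, and a routine induction on $n$ gives $y \tilde x^n y^{-1} = \tilde x^n [y,\tilde x]^n$ for all $y \in N$ and all $n \geq 1$. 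Letting $e$ be the exponent of the finite group $\gamma_c(N)$, we get $y \tilde x^e y^{-1} = \tilde x^e$ for every $y \in N$, so $\tilde x^e \in Z(N)$; and since $\tilde x$ has infinite order so does $\tilde x^e$, which gives $\langle \tilde x^e \rangle \cong \mathbb{Z} \leq Z(N)$ as required. With all three hypotheses confirmed, Theorem~\ref{second_main_thm} applies verbatim. The main obstacle is this last construction, but it is essentially routine once one exploits the centrality of $[y,\tilde x]$ to linearise the conjugation action of $N$ on the cyclic group $\langle \tilde x \rangle$.
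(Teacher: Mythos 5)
Your proposal is correct and takes essentially the same route as the paper: the corollary is deduced by applying Theorem~\ref{second_main_thm}, the only point to verify being that an infinite, finitely generated nilpotent group is conjugacy separable, has separable cyclic subgroups, and contains $\mathbb{Z}$ in its centre. The paper simply asserts that the centre of such a group has positive rank (and cites the classical separability results of Remeslennikov/Formanek and Mal'cev elsewhere), whereas you fill in that standard fact with a short induction on the nilpotency class; this is a harmless elaboration of the same argument.
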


Finally, we consider the case when the acting group contains the integers as a retract.
\begin{theorem}
    \label{theorem:acting with cyclic retract}
    Let $A$ be a nontrivial finitely generated abelian group, and suppose that $G$ is a conjugacy separable finitely generated group with separable cyclic subgroups that contains an infinite cyclic group as a retract. If $A$ is finite, then
$$
2^n \preceq \Conj_{A \wr G}(n).
$$
Otherwise,
$$
(\log(n))^{n}  \preceq \Conj_{A \wr G}(n).
$$ 
\end{theorem}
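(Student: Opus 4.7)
The plan is to reduce the theorem to the case $G = \mathbb{Z}$, where the needed lower bounds have already been established. The reduction proceeds by showing that $A \wr \mathbb{Z}$ is a retract of $A \wr G$, at which point Lemma \ref{lemma:retracts lower bound} transfers the bound $\Conj_{A \wr \mathbb{Z}}(n) \preceq \Conj_{A \wr G}(n)$, and Corollary \ref{corollary:main_corrolary} supplies the lower bounds for $\Conj_{A \wr \mathbb{Z}}(n)$. Specifically, the corollary gives $2^n \preceq \Conj_{A \wr \mathbb{Z}}(n)$ when $A$ is finite, and $(\log n)^n \preceq \Conj_{A \wr \mathbb{Z}}(n)$ when $A$ is infinite.

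To construct the retraction, let $\rho_G \colon G \to \mathbb{Z}$ be the given retraction of $G$ onto an infinite cyclic subgroup, and set $K = \ker(\rho_G)$. Since $K$ is normal and $G = K \rtimes \mathbb{Z}$ internally, each coset of $K$ in $G$ contains a unique element of $\mathbb{Z}$, and the embedding $A \wr \mathbb{Z} \hookrightarrow A \wr G$ is compatible with both the wreath product structure and $\rho_G$. I would define $\rho \colon A \wr G \to A \wr \mathbb{Z}$ by $\rho((f,g)) = (\bar{f}, \rho_G(g))$, where $\bar{f}(m) = \sum_{y \in mK} f(y)$; this sum is well-defined because $f$ is finitely supported and $A$ is abelian. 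Verifying $\rho$ is a homomorphism reduces to the identity $\overline{g \cdot f} = \rho_G(g) \cdot \bar{f}$, which after substituting $z = g^{-1}y$ in the defining sum amounts to observing that $y$ ranges over $mK$ iff $z$ ranges over $(m - \rho_G(g))K$, using normality of $K$. The identity $\rho|_{A \wr \mathbb{Z}} = \id$ follows from $mK \cap \mathbb{Z} = \{m\}$.

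Once the retraction is in hand, the remaining ingredient for Lemma \ref{lemma:retracts lower bound} is the conjugacy separability of $A \wr G$. Since $A$ is a finitely generated abelian group, it is conjugacy separable, while $G$ is assumed to be conjugacy separable with separable cyclic subgroups; by Remeslennikov's classification \cite{remeslennikov} (generalized in \cite{ferov_pengitore}), these hypotheses together guarantee that $A \wr G$ is conjugacy separable. Lemma \ref{lemma:retracts lower bound} then produces the desired comparison of depth functions.

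The only genuinely technical point is the homomorphism check for $\rho$, and this is where one must use essentially that $K$ is normal and that $A$ is abelian (the latter so that the fiber sums defining $\bar{f}$ are order-independent). Everything else is bookkeeping: assembling the retraction, invoking conjugacy separability of $A \wr G$, and quoting Corollary \ref{corollary:main_corrolary} with $B = \mathbb{Z}$.
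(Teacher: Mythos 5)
Your proposal is correct and follows essentially the same route as the paper: the paper takes the cyclic retract $\langle g\rangle \leq G$, observes via Lemma \ref{lemma:conjugacy embedded subgroups} that $A \wr \langle g\rangle \simeq A \wr \mathbb{Z}$ is a retract of $A \wr G$, and then applies Lemma \ref{lemma:retracts lower bound} and Corollary \ref{corollary:main_corrolary} exactly as you do. The only difference is that you construct the coset-summing retraction $\rho \colon A \wr G \to A \wr \mathbb{Z}$ by hand (and note the conjugacy separability of $A \wr G$ needed for Lemma \ref{lemma:retracts lower bound}), which is in fact a welcome bit of extra care, since Lemma \ref{lemma:conjugacy embedded subgroups} is stated only for abelian acting groups while here $G$ need not be abelian.
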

\begin{proof}
    Let $g \in G$ be an element of infinite order such that $\langle g \rangle$ is a retract in $G$. Then by Lemma \ref{lemma:conjugacy embedded subgroups} we see that $A \wr \langle g \rangle \simeq A \wr \mathbb{Z}$ is a retract in $A \wr G$ and $\Conj_{A \wr \langle g \rangle}(n) \leq \Conj_{A \wr G}(n)$. The rest then follows by Corollary \ref{corollary:main_corrolary}.
\end{proof}

\begin{corollary}
    Let $A$ be a finitely generated abelian group, and suppose that $G$ belongs to one of the following classes of groups:
    \begin{enumerate}
        \item[(i)] right-angled Artin groups,
        \item[(ii)] infinite finitely generated nilpotent and polycyclic groups,
        \item[(iii)] limit groups,
        \item[(iv)] fundamental groups of hyperbolic fibered 3-manifolds,
        \item[(v)] graph products of any of the above.
    \end{enumerate}
    If $A$ is finite, then
    \begin{displaymath}
        2^n \preceq \Conj_{A \wr G}(n).
    \end{displaymath}
    Otherwise,
\begin{displaymath}
    (\log(n))^{n}  \preceq \Conj_{A \wr G}(n).
\end{displaymath}
\end{corollary}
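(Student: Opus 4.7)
The plan is a straightforward verification that each listed class of groups $G$ satisfies the hypotheses of Theorem \ref{theorem:acting with cyclic retract central subgroup}: namely, that $G$ is conjugacy separable, has separable cyclic subgroups, and either contains an infinite cyclic subgroup as a retract or has $\mathbb{Z} \le Z(G)$. Once these three properties are established, the stated lower bounds follow immediately, so no new argument is needed beyond marshalling the appropriate literature.

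I would proceed class by class. For right-angled Artin groups, I would cite Minasyan for conjugacy separability, invoke the fact that RAAGs are virtually special and hence LERF by Haglund--Wise for cyclic subgroup separability, and note that sending every vertex generator but one to the identity exhibits an infinite cyclic retract. For infinite finitely generated nilpotent groups, the centre always has positive torsion-free rank so $\mathbb{Z}\le Z(G)$; conjugacy separability is due to Blackburn (with Remeslennikov--Formanek covering the polycyclic case), and cyclic subgroup separability follows from Mal'tsev. For limit groups I would use Chagas--Zalesskii for conjugacy separability and Wilton's LERF-type results for cyclic subgroup separability, and exploit torsion-freeness together with nontrivial abelianization to produce a surjection onto $\mathbb{Z}$ which splits since $\mathbb{Z}$ is free. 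For fundamental groups of hyperbolic fibered 3-manifolds, the fibration provides a split surjection onto $\mathbb{Z}$, and Agol's virtual specialness together with Hamilton--Wilton--Zalesskii supplies the two separability properties. For graph products, I would appeal to Ferov's work showing that conjugacy and cyclic subgroup separability are inherited from the vertex groups, and observe that the canonical retraction of a graph product onto any vertex group transports a $\mathbb{Z}$-retract or a central $\mathbb{Z}$ from that vertex group up to the whole graph product.

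The only mildly delicate case is the polycyclic one, since a general infinite polycyclic group can have trivial centre and need not admit a $\mathbb{Z}$-retract directly. Here the plan is to pass to a torsion-free subgroup $H$ of finite index whose abelianization has positive rank, extract a split surjection $H\twoheadrightarrow\mathbb{Z}$, and transfer the lower bound from $\Conj_{A\wr H}(n)$ to $\Conj_{A\wr G}(n)$ using Lemma \ref{conj_embedded_conj_depth} applied to the natural inclusion $A\wr H \hookrightarrow A\wr G$ (together with the observation that finite-index embeddings preserve the asymptotic class of the word length). With this harmless adjustment, every class falls within the scope of Theorem \ref{theorem:acting with cyclic retract central subgroup}, and the corollary is complete.
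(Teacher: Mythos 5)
Your overall strategy is the same as the paper's: a class-by-class verification that $G$ satisfies the hypotheses of Theorem \ref{theorem:acting with cyclic retract central subgroup} (conjugacy separability, separable cyclic subgroups, and an infinite cyclic retract), followed by an immediate application of that theorem, and most of your citations match the paper's (Minasyan for RAAGs, Chagas--Zalesskii and Wilton for limit groups, Hamilton--Wilton--Zalesskii for fibered hyperbolic $3$-manifolds, Ferov and Berlai--Ferov for graph products). Two of your justifications are off, though the facts you need are true: ``virtually special hence LERF'' is not a valid deduction and RAAGs are in fact not LERF in general ($F_2 \times F_2$ is a RAAG and contains Mihailova subgroups); cyclic subgroup separability of RAAGs is nevertheless true and is what the paper cites (Green). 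Similarly, a central copy of $\mathbb{Z}$ in a vertex group is not central in a graph product, so ``transporting a central $\mathbb{Z}$'' does not literally work; what does work, and suffices, is composing the canonical retraction onto a vertex group with a $\mathbb{Z}$-retract of that vertex group.

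The genuine gap is in your treatment of the polycyclic case. You are right that this case is delicate (an infinite polycyclic group can have trivial centre and finite abelianization, so neither hypothesis of the theorem is directly available; the paper glosses over this by asserting infinite abelianization). But your proposed repair does not go through as stated: Lemma \ref{conj_embedded_conj_depth} has conjugacy embeddedness as a hypothesis, and you have not shown that $A \wr H$ is conjugacy embedded in $A \wr G$ when $H \leq G$ merely has finite index. The paper's only mechanisms for transferring lower bounds are Lemma \ref{lemma:retracts lower bound} and Lemma \ref{lemma:conjugacy embedded subgroups}, both of which rely on a retraction, and a proper finite-index subgroup is essentially never a retract; nor does finite index by itself give conjugacy embedding of the wreath products, since two elements of $A \wr H$ could a priori be conjugated by an element whose $G$-coordinate lies outside $H$, and ruling this out requires an actual argument (for instance a coset-decomposition analysis with $H$ normal, treating infinite-order and finite-order top parts separately). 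Until such an argument (or some other transfer principle) is supplied, the lower bound for $\Conj_{A \wr G}(n)$ with $G$ an arbitrary infinite polycyclic group is not established by your proposal; the remaining classes are fine.
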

\begin{proof}
    Right-angled Artin groups were shown to be conjugacy separable by Minasyan in \cite[Theorem 1.1]{ashot_raags}, cyclic subgroup separable by Green in \cite[Theorem 2.16]{green}, and checking that a right-right angled Artin group admits an infinite cyclic retract is easy - just consider an endomorphism that maps all but one generator to the identity.
    
    For fundamental groups of closed orientable surfaces, they are known to be subgroup separable due to Scott \cite{Scott1, Scott2} and basic structure theory of abelian groups. The fact that they are conjugacy separable follows from Martino \cite{armando}. Lastly, it is clear that they have infinite cyclic retracts due to the fact that they have infinite abelianizations.
    
    Infinite polycyclic and finitely generated nilpotent groups are well known to conjugacy separable and subgroup separable. Proofs of these results can be found in \cite[Theorem 3, pg59]{sega83-1} and \cite{malcev}. The fact that these classes of groups admit an infinite cyclic retract follows from the fact that infinite polycyclic groups and infinite finitely generated abelian groups always have infinite abelianization by basic structure results found in \cite{sega83-1}.
    
    For limit groups, it was shown by Wilton that they are subgroup separable in \cite[Theorem A]{limit}, and Chagas and Zalesskii demonstrated that they are conjugacy separable in \cite[Theorem 1.1]{limit}. The fact that they have infinite cyclic retracts follows from the fact that limit groups are fully residually free.
    
    The proof that fundamental groups of hyperbolic $3$-manifold groups are conjugacy separable follows from Hamilton, Wilton, and Zalesskii \cite[Theorem 1.3]{compact} and subgroup separability follows from \cite[Corollary 4.2.3]{3_manifold_groups}. The fact that they have infinite cyclic retracts follows from these fact that these groups have the form $\pi_1(\Sigma_g) \rtimes \mathbb{Z}$ where $\Sigma_g$ is a closed orientable genus $g\geq 2$ surface.
    
    The class of conjugacy separable groups is closed under forming graph products by \cite[Theorem 1.1]{mf}. The class of cyclic subgroup separable groups is closed under forming graph products by \cite[Theorem A]{berlai-mf-cyclic}. Finally, to see that a graph product of groups that admits an infinite cyclic retract again admits an infinite cyclic retracts easily follows from the fact that all vertex groups are themselves retracts.
\end{proof}

The vast range of examples we were able to construct in this paper either with Theorem \ref{second_main_thm} and Theorem \ref{theorem:acting with cyclic retract} lead us to believe that the lower bounds we produced cannot be relaxed and therefore we state the following conjecture.
\begin{conjecture}
    Let $A$ be a finitely generated abelian group and let $G$ be a conjugacy separable group with separable cyclic subgroups. If $A$ is finite, then
$$
2^n \preceq \Conj_{A \wr G}(n).
$$
Otherwise,
$$
(\log(n))^{n}  \preceq \Conj_{A \wr G}(n).
$$
\end{conjecture}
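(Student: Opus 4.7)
The plan is to extend the constructions of Propositions \ref{rank_one_lamplighter} and \ref{infinite_lower_bound} to produce non-conjugate pairs in $A \wr G$ whose conjugacy depth grows as required. Theorems \ref{second_main_thm} and \ref{theorem:acting with cyclic retract} already handle two important structural cases, so the residual difficulty lies entirely in the absence of a central copy of $\mathbb{Z}$ or an infinite cyclic retract of $G$. The hypotheses of conjugacy separability and separability of cyclic subgroups are strong enough to control finite quotient behaviour abstractly, but they do not provide any algebraic rigidity that forces conjugators in $A \wr G$ to lie inside $A \wr \langle g \rangle$ for a chosen element $g$ of infinite order.

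First I would pick an element $g \in G$ of infinite order, which exists whenever $G$ is infinite (and the statement is otherwise vacuous). By cyclic subgroup separability, for every $m \in \mathbb{N}$ there is a finite quotient of $G$ in which the image of $g$ has order at least $m$. Inside $A \wr \langle g \rangle \simeq A \wr \mathbb{Z} \leq A \wr G$, I would then take the explicit sequences $(f_i, g^{n_i})$ and $(h_i, g^{n_i})$ constructed in Propositions \ref{rank_one_lamplighter} and \ref{infinite_lower_bound}. By Lemma \ref{lemma:abelian_wreath_conjugacy_criterion} these are non-conjugate in $A \wr \langle g \rangle$, and the Laurent polynomial identities used there force any separating quotient of $A \wr \mathbb{Z}$ to be of the required size.

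The main step is then to transfer non-conjugacy from $A \wr \langle g \rangle$ to $A \wr G$. By \cite[Lemma 5.14]{ferov_pengitore}, $(f_i, g^{n_i}) \sim_{A \wr G} (h_i, g^{n_i})$ if and only if there exists $c \in C_G(g^{n_i})$ with $c \cdot f_i = h_i$. When $C_G(g^{n_i}) = \langle g^{n_i} \rangle$, or more generally when the centralizer acts on the supports in a controlled manner, the analysis collapses to the abelian case and the lower bound transfers directly. In general, however, the centralizer can be strictly larger than $\langle g \rangle$, and ruling out ``accidental'' conjugations through outside centralizer elements is the genuine obstacle. A plausible route is to engineer $f_i, h_i$ so that the multiset of coordinate values on $\supp(f_i) \cup \supp(h_i)$ is rigid under every permutation of cosets induced by $C_G(g^{n_i})$; this might be achievable by concentrating support on a single right coset of a carefully chosen cyclic subgroup and encoding the lamplighter polynomial identity into the values themselves, so that any centralizer twist would collide with the polynomial obstruction already handled in Section \ref{sec:lower bounds}.

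Once non-conjugacy in $A \wr G$ is secured, the lower bound on separating quotient size would be obtained by dualising the argument of Section \ref{sec:lower bounds}: given a finite-index normal $N \trianglelefteq A \wr G$, one analyses $N \cap A^G$ as a module over the group ring of a suitable finite quotient of $G$, and uses cyclic subgroup separability to control the image of $\langle g \rangle$ and reproduce the Laurent polynomial bookkeeping in $\mathbb{F}_p[x,x^{-1}]$ or $\mathbb{Z}[x,x^{-1}]$. The hardest part, by a large margin, is the centralizer issue above; overcoming it is likely where any genuinely new idea is needed, and it is precisely this gap that explains why the statement is presented as a conjecture rather than a theorem.
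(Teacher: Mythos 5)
You should note first that the statement you were asked to prove is, in the paper, precisely a \emph{conjecture}: the authors give no proof of it, only the two special cases you cite (Theorem \ref{second_main_thm}, where $\mathbb{Z}\leq Z(G)$, and Theorem \ref{theorem:acting with cyclic retract}, where $G$ retracts onto $\mathbb{Z}$), both of which reduce to Corollary \ref{corollary:main_corrolary} by forcing conjugators into $A\wr\langle g\rangle$. Your proposal is therefore a programme rather than a proof, and you concede this yourself: the step you call the ``genuine obstacle'' --- showing that for your chosen pairs a conjugator in $A\wr G$ can be replaced by one in $A\wr\langle g\rangle$, i.e.\ that $A\wr\langle g\rangle$ is (at least on these pairs) conjugacy embedded --- is exactly the content that the centrality or retract hypotheses supply in the proved cases, and your suggestion to ``engineer $f_i,h_i$ so that the supports are rigid under every centralizer twist'' is not carried out and no mechanism in the stated hypotheses (conjugacy separability plus separability of cyclic subgroups) makes it available. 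A further concrete gap: your opening move assumes $G$ contains an element of infinite order whenever it is infinite, but infinite finitely generated residually finite (indeed conjugacy separable) torsion groups exist, so even the choice of $g$ is not justified by the hypotheses; for such $G$ your construction does not start, yet the conjecture still makes a claim.

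Two smaller points on the architecture. If the conjugacy-embedding step were secured, your final ``dualisation'' paragraph would be unnecessary: Lemma \ref{conj_embedded_conj_depth} already gives $\CD_{A\wr\langle g\rangle}(x,y)\leq\CD_{A\wr G}(x,y)$ for conjugacy embedded subgroups, so the quotient bookkeeping of Section \ref{sec:lower bounds} transfers automatically and there is no need to analyse $N\cap A^{G}$ as a module over a group ring. Conversely, as a substitute for the missing embedding the dualisation sketch would not work as described: the divisibility arguments of Propositions \ref{rank_one_lamplighter} and \ref{infinite_lower_bound} rely on $\mathbb{F}_p[x,x^{-1}]$ and $\mathbb{Z}[x,x^{-1}]$ being principal ideal domains, and the paper's own open questions point out that these arguments already fail to carry over to Laurent polynomials in several variables (the case $G=\mathbb{Z}^k$), let alone to group rings of nonabelian $G$. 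So the proposal correctly locates where a new idea is needed, but it does not supply one, and the statement remains open.
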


\section{Open questions}
The constructions of lower bounds for $\mathbb{F}_p \wr \mathbb{Z}$ and $\mathbb{Z} \wr \mathbb{Z}$ given in Section \ref{sec:lower bounds} relies heavily on the fact that both $\mathbb{F}_p \wr \mathbb{Z}$ and $\mathbb{Z} \wr \mathbb{Z}$ can be represented as a semidirect product of the additive group of the ring of Laurent polynomials over $\mathbb{F}_p$ and $\mathbb{Z}$, respectively, and that subgroups of finite index correspond to cofinite ideals in the said polynomial rings. This approach can be generalised: let $R$ denote either $\mathbb{F}_p$ or $\mathbb{Z}$, then
\begin{displaymath}
    R \wr \mathbb{Z}^k \simeq R[X_1, X_1^{-1}, \dots, X_k, X_k^{-1}] \rtimes \mathbb{Z}^k,
\end{displaymath}
where the action of $\mathbb{Z}^k$ on $\simeq R[X_1, X_1^{-1}, \dots, X_k, X_k^{-1}]$ is given by
\begin{displaymath}
    (b_1, \dots, b_k) \cdot X_1^{e_1} \dots X_k^{e_k} = X_1^{b_1 + e_1} \dots X_k^{e_k + b_k}.
\end{displaymath}
One can easily check that finite index subgroups of $R \wr \mathbb{Z}^k$ correspond to cofinite ideals of $R[X_1, X_1^{-1}, \dots, X_k, X_k^{-1}]$ and statement similar to Lemma \ref{conjugacy_class} can be proved, but the arguments based on divisibility in $R[X,X^{-1}]$ do not carry over to $R[X_1, X_1^{-1}, \dots, X_k, X_k^{-1}]$.
\begin{question}
    Can the proofs of Proposition \ref{rank_one_lamplighter} and Proposition \ref{infinite_lower_bound} be modified to produce lower bounds for the conjugacy depth functions of $\mathbb{F}_p \wr \mathbb{Z}^k$ and $\mathbb{Z} \wr \mathbb{Z}^k$, respectively, that dominate and are not dominated by $2^n$ and $\log(n)^n$, respectively? In particular, can it be shown that
    \begin{displaymath}
        2^{n^k} \preceq \Conj_{\mathbb{F}_p \wr \mathbb{Z}^k}(n)
    \end{displaymath}
    and
    \begin{displaymath}
        \log(n)^{n^k} \preceq \Conj_{\mathbb{Z} \wr \mathbb{Z}^k}(n)?
    \end{displaymath}
\end{question}

The upper bounds given by Proposition \ref{lemma:lamplighter_upper} and Proposition \ref{infinite_upper_bound} treat wreath products of abelian groups differently, based on the torsion-free rank of the acting group, which might feel somewhat disappointing. The following question is therefore a naturally arising one. 
\begin{question}
     Can the proofs of Proposition  Proposition \ref{lemma:lamplighter_upper} and Proposition \ref{infinite_upper_bound} be modified to produce upper bounds for the conjugacy depth functions of $\mathbb{F}_p \wr \mathbb{Z}^k$ and $\mathbb{Z} \wr \mathbb{Z}^k$, respectively, that are given by a single closed-form formula that does not have cases based on the torsion-free rank of the acting group? In particular, can it be shown that
     \begin{displaymath}
         \Conj_{\mathbb{F}_p \wr \mathbb{Z}^k}(n) \preceq 2^{n^k}
     \end{displaymath}
     and
     \begin{displaymath}
         \Conj_{\mathbb{Z} \wr \mathbb{Z}^k}(n) \preceq \log(n)^{n^{2k}}?
     \end{displaymath}
\end{question}

\bibliographystyle{plain}
\bibliography{references}

\end{document}